\DeclareMathOperator*{\esssup}{ess\,sup}
\DeclareMathOperator*{\essinf}{ess\,inf}
\DeclareMathOperator*{\loc}{loc}
\DeclareMathOperator*{\supp}{supp}
\newcommand*\diff{\mathop{}\!\mathrm{d}}
\renewcommand{\l}{\left}
\renewcommand{\r}{\right}
\newcommand{\round}[1]{\left(#1\right)}
\newcommand{\scal}[1]{\left\langle#1\right\rangle}
\newcommand{\wt}{\widetilde}
\newcommand{\wh}[1]{\widehat{#1}}
\newcommand{\R}{{\mathbb R}}
\newcommand{\N}{{\mathbb N}}
\newcommand{\RN}{\mathbb{R}^N}
\newcommand{\Lp}[1]{L^{#1}(\Omega)}
\newcommand{\close}{\overline{\Omega}}
\newcommand{\eps}{\varepsilon}
\newcommand{\de}{\delta}
\newcommand{\into}{\int_{\Omega}}
\newcommand{\Assg}[1]{\textup{(g)}}
\begin{document}
	\title[Global boundedness for generalized Schr\"{o}dinger-type double phase problems in $\R^N$]
	{Global boundedness for Generalized Schr\"odinger-Type Double Phase Problems  in  $\R^N$ and Applications to Supercritical Double Phase Problems}

	\author[H.H. Ha]{Hoang Hai Ha}\address{Hoang Hai Ha \newline
		Department of Mathematics, Faculty of Applied Science , Ho Chi Minh City University of Technology (HCMUT), 268 Ly Thuong Kiet Street, District 10, Ho Chi Minh City, Vietnam \newline
		Vietnam National University Ho Chi Minh City, Linh Trung Ward, Thu Duc City,  Vietnam}
	\email{hoanghaiha@hcmut.edu.vn}
	
	\author[K. Ho]{Ky Ho}
	\address{Ky Ho\newline
		Department of Mathematics and Statistics, University of Economics Ho Chi Minh City, 59C, Nguyen Dinh Chieu Street, Ho Chi Minh City, Vietnam}
	\email{kyhn@ueh.edu.vn}
	
	\author[B.T. Quan]{Bui The Quan}
	\address{Bui The Quan\newline
		Department of Mathematics, Ho Chi Minh City University of Education, Ho Chi Minh City, Vietnam}
	\email{quanbt@hcmue.edu.vn}   
	
	\author[I. Sim]{Inbo Sim}
	\address{Inbo Sim \newline
		Department of Mathematics, University of Ulsan, Ulsan 44610, Republic of Korea}
	\email{ibsim@ulsan.ac.kr}
	
	\subjclass{35B45, 35B65, 35J20, 35J60, 46E35}
	\keywords{Musielak-Orlicz Sobolev space, double phase operator, Sobolev conjugate function, \textit{a priori} bounds, De Giorgi-Nash-Moser iteration}

	\begin{abstract}
	We establish two global boundedness results for weak solutions to generalized Schr\"{o}dinger-type double phase problems with variable exponents in $\R^N$ under new critical growth conditions optimally introduced in \cite{HH25,HW2022}. More precisely, for the case of subcritical growth, we employ the De Giorgi iteration with a suitable localization method in $\R^N$ to obtain \textit{a priori} bounds. As a byproduct, we derive the decay property of weak solutions.  For the case of critical growth,  using the De Giorgi iteration with a localization adapted to the critical growth, we prove the global boundedness. As an interesting application of these results, the existence of weak solutions for supercritical double phase problems is shown. These results are new even for problems with constant exponents in $\R^N$.
	\end{abstract}
	
	\maketitle \numberwithin{equation}{section}
	\newtheorem{theorem}{Theorem}[section]
	\newtheorem{lemma}[theorem]{Lemma}
	\newtheorem{definition}[theorem]{Definition}
	\newtheorem{claim}[theorem]{Claim}
	\newtheorem{proposition}[theorem]{Proposition}
	\newtheorem{remark}[theorem]{Remark}
	\newtheorem{corollary}[theorem]{Corollary}
	\newtheorem{example}[theorem]{Example}
	\newtheorem{assumption}[theorem]{Assumption}
	\allowdisplaybreaks

\section{Introduction}\label{Intro}

Since Hilbert’s 19th problem, the study of regularity in partial differential equations (PDEs) has become a central theme in the analysis of differential equations. A widely used approach to establishing regularity for a given PDE is to construct an appropriate function space for solutions, often incorporating Sobolev-type embeddings, and to apply De Giorgi–Nash–Moser iteration schemes.

\vskip5pt
In this article, we are concerned with generalized Schr\"{o}dinger-type double phase problems with variable exponents in  $\R^N$ $(N\geq 2)$, which have attracted considerable attention not only among physicists but also among mathematicians, of the form:
\begin{eqnarray}\label{eq1}
	-\operatorname{div}\mathcal{A}(x,u,\nabla u)+ V(x)\Big(|u|^{p(x)-2} + a(x) |u|^{q(x)-2}\Big)u =\mathcal{B}(x,u,\nabla u) \quad \text{in } \R^N,
\end{eqnarray} 
where $a,p,q \in C^{0,1}(\RN)$  such that $a(x) \geq 0$ and $1<p(x)<q(x)<N$ for all $x\in\RN$, and the potential $V\in L^1_{\loc}\left(\R^N\right)$ with $V(x) \geq 0$ for a.a. $x\in \mathbb{R}^N$. The operator $\operatorname{div}\mathcal{A}(x,u,\nabla u)$ generalizes the double phase operator
\begin{equation}\label{p(.)-La}
\operatorname{div}\left(|\nabla u|^{p(x)-2}\nabla u+ a(x) |\nabla u|^{q(x)-2}\nabla u\right),
\end{equation}
and  $\mathcal{B}:\R^N\times \R \times \R^N \to \R$ is a Carath\'eodory function. Some further conditions will be presented later.

\vskip5pt
The origin of double phase problems dates back to the 1980s. In an effort to model  strongly anisotropic materials, Zhikov \cite{Zhikov-1986} initiated the study of a functional 
\begin{equation}\label{I}
	I(u) = \int_\Omega f\left(x, \nabla u \right) \diff x,
\end{equation}
where $\Omega\subset\mathbb{R}^N$ is a bounded domain and the integrand $f:\Omega\times\mathbb{R}^N\to \mathbb{R}$ satisfies
\begin{equation}\label{cond}
	c_1 |\xi|^p \le f(x,\xi) \le c_2 \left(|\xi|^q+1\right), \quad 1<p<q<\infty.
\end{equation}
A typical example is the double phase operator
\begin{equation}\label{db}
I(u) = \int_\Omega \left(|\nabla u|^{p} + a(x) |\nabla u|^{q} \right) \diff x,
\end{equation}
where the integrand exhibits a switch between two distinct elliptic behaviors.  The functional \eqref{db} is relevant in the context of homogenization and elasticity, where the function $a(\cdot)$ governs the geometry of a composite material composed of two constituents  with hardening exponents $p$ and $q$; see e.g., \cite{Zhikov-1986, Zhikov-1995, Zhikov1997, ZKO}.  Double phase problems also arise in various physical applications \cite{Ba-Ra-Re, Ben2000, Che2005}. Single $p(\cdot)$-Laplacian, namely the operator given by \eqref{p(.)-La} with $a(\cdot)\equiv 0$, can be found in many applications, e.g., image processing \cite{CLR05} and electrorheological fluids \cite{Ru2000}. 

\vskip5pt
From the  view of regularity, Marcellini \cite{Mar89b, Mar91}  was the first to investigate the H\"older continuity of minimizers of the functional $I$ given in \eqref{I}, as well as the weak solutions to the corresponding Euler equation under the nonstandard growth condition \eqref{cond}. In this direction, we refer the reader to the outstanding works focused on double phase operators—among others too numerous to list—such as \cite{BCM15,BCM16,BCM18,BKM15,BO20,CM15a,CM15b,DeF18,ELM,HHT17,Mar89b,Mar91,Ok18,Ok20,RT20}, and the references therein. 


\vskip5pt
It is worth noting that although a large number of papers have addressed the regularity theory for double phase problems,  most of them primarily focus on the regularity of minimizers of double phase functionals. 
To the best of our knowledge, the only result concerning the Hölder continuity of weak solutions for problems involving the double phase operator given in \eqref{p(.)-La} on a bounded domain is found in \cite{RK25}. For such problems, a natural solution space is the Musielak-Orlicz Sobolev space $W^{1,\mathcal{H}}(\Omega)$, which was studied in \cite{CS16, crespo2022new}, inspired by the foundational works  \cite{fan2012,Mus} (see Section~\ref{Pre}), where $\mathcal{H} : \overline{\Omega} \times [0,\infty) \to [0,\infty)$ is defined by
$$
\mathcal{H}(x,t) = t^{p(x)} +a(x) t^{q(x)}.
$$

\vskip5pt
To optimize the class of power-law growth, Ho--Winkert \cite{HW2022} recently investigated the following critical embedding with bounded underlying  domain
\begin{equation}\label{CE}
W^{1,\mathcal{H}}(\Omega) \hookrightarrow L^{\mathcal{G}^*}(\Omega),
\end{equation}
where $\mathcal{G}^* : \overline{\Omega} \times [0,\infty) \to [0,\infty)$ is defined by
$$
\mathcal{G}^* (x,t) := t^{p^*(x)} +a(x)^{\frac{q^*(x)}{q(x)}} t^{q^*(x)}
$$
with $r^*(x):=\frac{N r(x)}{N-r(x)}$ for a function $r \in C(\overline{\Omega})$ satisfying $1<r(x)<N$ for all $x\in\overline{\Omega}$. The exponent $\frac{q^*(\cdot)}{q(\cdot)}$ is emphasized as optimal in the sense that it represents the most suitable choice among all admissible exponents.  Utilizing the De Giorgi iteration along with localization arguments and taking  advantage of compactness of $\overline{\Omega},$ they  established the boundedness of weak solutions for the following problem
\begin{eqnarray}\label{eq1'}
	-\operatorname{div}\mathcal{A}(x,u,\nabla u) =\mathcal{B}(x,u,\nabla u) \quad \text{in } \Omega,
\end{eqnarray} 
in both subcritical and critical growth cases, under the following assumptions on $\mathcal{A}$ and $\mathcal{B}$:  
\begin{enumerate}[label=\textnormal{(H)},ref=\textnormal{H}]
	\item\label{H}
	$\mathcal{A} : \Omega\times\R\times\R^N\to \R^N$ and $\mathcal{B} : \Omega \times \R\times \R^N\to \R$ are Carath\'eodory functions such that
	\begin{enumerate}[label=\textnormal{(\roman*)},ref=\textnormal{D$_1$(\roman*)}]
		\item
		$ |\mathcal{A}(x,t,\xi)|
		\leq \alpha_1 \left[|t|^{\frac{p^*(x)}{p'(x)}}+a(x)^{\frac{N-1}{N-q(x)}}|t|^{\frac{q^*(x)}{q'(x)}}+|\xi|^{p(x)-1} +a(x)|\xi|^{q(x)-1}+1\right],$ 
		\item
		$ \mathcal{A}(x,t,\xi)\cdot \xi \geq \alpha_2 \left[|\xi|^{p(x)} +a(x)|\xi|^{q(x)}\right]-\alpha_3\left[|t|^{r(x)}+a(x)^{\frac{s(x)}{q(x)}}|t|^{s(x)}+1\right],$
		\item
		$ |\mathcal{B}(x,t,\xi)|
		\leq \beta  \left[|t|^{r(x)-1}+a(x)^{\frac{s(x)}{q(x)}}|t|^{s(x)-1}+|\xi|^{\frac{p(x)}{r'(x)}} + a(x)^{\frac{1}{q(x)}+\frac{1}{s'(x)}}|\xi|^{\frac{q(x)}{s'(x)}} +1 \right],$
	\end{enumerate}
for a.\,a.\,$x\in \Omega$ and for all $(t,\xi) \in \R\times\R^N$, where $\alpha_1,\alpha_2,\alpha_3$, $\beta$ are positive constants, $r,s\in C(\overline{\Omega})$ satisfy $p(x) <r(x) \leq p^*(x)$ and $q(x) <s(x) \leq q^*(x)$ for all $x \in \overline{\Omega},$ and $t'(\cdot):=\frac{t(\cdot)}{t(\cdot)-1}$ for $t\in \{q,s\}$.
\end{enumerate}
In particular, they obtained an \textit{a priori} bound for solutions in the subcritical case, namely, $r(x) < p^*(x)$ and $s(x) < q^*(x)$ for all $x \in \overline{\Omega}$.
It is worth mentioning that the idea for treating critical growth case in \cite{HW2022} originates from \cite{HKWZ}, which initially proved the global boundedness and H\"older continuity to critical $p(\cdot)$-Laplace equations. Following up on the work in \cite{HW2022}, Ha-Ho \cite{HH25} established the embedding \eqref{CE} for any open domain in $\mathbb{R}^N$, and applied it to obtain the existence and concentration of solutions for problem \eqref{eq1} with $\mathcal{A}$ of the form \eqref{p(.)-La} and $\mathcal{B}$ involving $\mathcal{G}^*$. They further emphasized that the potential $V$ may vanish, be coercive or be a positive constant (see condition \eqref{O} in Section~\ref{Pre}), after discussing several well-known studies on this topic \cite{Bar.Wang2001,BW.95,Sun-Wu-2014}. 

                                                                                                                \vskip5pt
                                                                                                                In general, a global $L^\infty$-estimate provides useful bounds for solutions to problem \eqref{eq1}, playing a prominent role and serving as a foundation for studying qualitative properties of solutions, such as regularity and Harnack-type inequalities. The global boundedness of solutions to  elliptic problems with constant exponents on unbounded domains, including  $\mathbb{R}^N,$ has been established in several works, see \cite{CT24,HL08,Li90}. A common technique for obtaining such results is the Moser iteration method, which relies on the  H\"older inequality to derive improved estimates for Lebesgue norms. However, this technique is generally not applicable to problems driven by nonhomogeneous operators—such as double phase operators (including those involving the constant exponents) or the $p(\cdot)$-Laplacian—due to the lack of H\"older-type inequality in integral form (compare Proposition \ref{prop.Holder}).
   
\vskip5pt
Based on the above motivation, we aim to establish two global boundedness results for weak solutions to problem \eqref{eq1} under assumptions \eqref{H1} (see Section~\ref{Sub}) and \eqref{H2} (see Section~\ref{Cri}) on $\mathcal{A}$ and $\mathcal{B}$, which represent modifications of \eqref{H} corresponding to the subcritical and critical cases, respectively, as well as assumption \eqref{O} on the potential $V$. 
One of the main tools for deriving these results is the Sobolev-type embedding on localized domains (see Proposition \ref{RmkEm}).  First, we obtain an \textit{a priori} bound for weak solutions in the subcritical growth case  by applying the De Giorgi iteration along with a refined localization technique adapted to the whole space $\mathbb{R}^N$. By using the H\"older inequality and then Sobolev-type embedding incorporating the relations between modular and its corresponding norm, we can estimate the truncated energies by the measure of level sets and derive the \textit{a priori} bound. As a consequence, we also derive the decay property of weak solutions, which supports the study of the existence of decaying positive solutions in $\mathbb{R}^N.$ Second, we establish  the global boundedness of weak solutions in the case of critical growth. To achieve this, we again apply the De Giorgi iteration, incorporating a recursive scheme involving the truncated energy of $\mathcal{G}^*$, and use a localization technique related to $\frac{p^*(\cdot)}{q(\cdot)}$. Note that in this case, we cannot use H\"older's inequality to estimate the truncated energy. However, by using appropriate estimates of modular by norm, we can still establish the global boundedness in this case.  

\vskip5pt
Having established the boundedness of solutions, it is natural to investigate their H\"older continuity. We observe that the H\"older continuity of solutions to the double phase problem \eqref{eq1'},
where $\Omega$ is a bounded Lipschitz domain, was established only recently in \cite{RK25}. By combining this result with our boundedness result,  we can deduce the local H\"older continuity of solutions to problem~\eqref{eq1} in the case $V\in L^\infty(\R^N).$ 

\vskip5pt 
In addition to applications in the study qualitative properties of solutions, our result can be useful in another direction. In this paper, we apply the aforementioned \textit{a priori} bounds to investigate the existence results for supercritical double phase problems. One of the main challenges in dealing with supercritical problems is that the corresponding energy functional is generally not well-defined on the Sobolev space associated with the main operator when the underlying domain is a general Lipschitz domain. To overcome this difficulty, a widely used method—originally introduced by Rabinowitz \cite{Rab74} for a class of Laplace equations—is to apply a truncation to the reaction term so as to reduce the problem to the subcritical case. Then,  an $L^\infty$-estimate is established for solutions to the truncated problem, ensuring that they also solve the original one. Since then, this approach has been extensively adopted to establish existence results for supercritical problems driven by the $p$-Laplacian; see, e.g., \cite{Cor-Fig06, Zhao24} for bounded domains and \cite{Fig-Fur07, LW13} for problems posed on $\mathbb{R}^N$.
 It is worth noting that in most existing works, the Moser iteration  is employed to obtain the desired  $L^\infty$-estimate. However, as mentioned earlier, this approach generally does not work for certain non-homogeneous operators, such as the $p(\cdot)$-Laplacian or  double phase operators. In \cite{Zhao24}, Zhao established such an $L^\infty$-estimate for solutions to truncated problems of a class of supercritical $p(\cdot)$-Laplace equations using the De Giorgi iteration. However, to the best of our knowledge, there are no known existence results for supercritical double phase problems, especially for variable exponents on $\R^N$.

\vskip5pt 
The paper is organized as follows. In Section~\ref{Pre}, we present the definition and basic properties of the solution space for problem~\eqref{eq1}, as established in \cite{HH25}, and prove a crucial embedding result (Proposition \ref{RmkEm}) that will be used in subsequent sections. In Section~\ref{Sub}, we state and prove \textit{a priori} bounds and the decay at infinity of solutions to problem~\eqref{eq1} under subcritical growth assumptions. Section~\ref{Cri} is devoted to the global boundedness of solutions in the critical growth case, where we also provide the corresponding proof. As an application of the main results, the final section addresses the existence of solutions to supercritical double phase problems, both in $\mathbb{R}^N$ and in bounded domains.

\section{Preliminaries}\label{Pre}

In this section, we review the main properties of Musielak-Orlicz and Musielak-Orlicz Sobolev spaces which serve as the solution spaces for problem~\eqref{eq1}. Most of these properties were established in \cite{crespo2022new,HW2022,HH25}. For a comprehensive survey on the topic, we refer the reader to \cite{Diening,HH.book,Mus}.

\vskip5pt

Let $\Omega$ be an open domain in $\RN$. Throughout the article, for a function $h\in C(\overline{\Omega})$ we denote
$$h_\Omega^-:= \inf_{x\in \Omega}h(x)\ \ \text{and}\ \  h_\Omega^+:= \sup_{x\in \Omega}h(x).$$
If $\Omega$ is implicitly understood, we shall write $h^-$ and $h^+$ in place of $h_\Omega^-$ and $h_\Omega^+$, respectively. Denote
\begin{align*}
	C_+(\overline{\Omega}):=\{v\in C(\overline{\Omega}): \,  1<v^-\leq v(\cdot) \leq v^+<\infty \}
\end{align*}
and
\begin{equation*}
	\mathcal{M}(\Omega):=\{m: \Omega \to \mathbb{R}: \, m ~ \text{is  Lebesgue measurable}~ \text{in}~ \Omega \}.
\end{equation*}
For any function $h\in \mathcal{M}(\Omega)$, we denote
\begin{equation*}
	h_+(\cdot):=\max\{h(\cdot),0\},~ 	h_-(\cdot):=-\min\{h(\cdot),0\}.
\end{equation*}
For $t\in C(\overline{\Omega})$ with $1<t(\cdot)<N$,  denote $$t'(\cdot):=\frac{t(\cdot)}{t(\cdot)-1}\  \text{and}\ \ t^\ast(\cdot) := \frac{Nt(\cdot)}{N-t(\cdot)}.$$ 
 For $E\subset\R^N$, $|E|$ denotes the Lebesgue measure of $E$. 
\subsection{A class of Musielak-Orlicz Sobolev spaces}${}$ \label{Subsec.M.O.S space}

\vskip5pt

Define $\Psi:\, \overline{\Omega}\times [0,\infty)\to [0,\infty)$ as 
\begin{align}\label{Psi}
	\Psi(x,t):=w_1(x)t^{\alpha(x)}+w_2(x)t^{\beta(x)}
	\quad\text{for } (x,t)\in \overline{\Omega}\times [0,\infty),
\end{align}
where $\alpha,\,\beta\in C_+(\close)$ with $\alpha(\cdot)<\beta(\cdot)$, $0 <  w_1(\cdot) \in \Lp{1}\cup L^\infty(\Omega)$ and $0 \leq w_2(\cdot) \in \Lp{1}\cup L^\infty(\Omega)$.

\noindent Define the modular $\rho_{\Psi}$ associated with $\Psi$ as
\begin{align*}
	\rho_{\Psi}(u): =  \into \Psi (x,|u(x)|)\,\diff x .
\end{align*}
The corresponding Musielak-Orlicz space $\Lp{\Psi}$ is defined by 
\begin{align*}
	L^{\Psi}(\Omega):=\left \{u\in\mathcal{M}(\Omega):\,\rho_{\Psi}(u) < \infty \right\},
\end{align*}
endowed with the norm
\begin{align*}
	\|u\|_{L^{\Psi}(\Omega)}:= \inf \left \{ \tau >0 :\, \rho_{\Psi}\left(\frac{u}{\tau}\right) \leq 1  \right \}.
\end{align*}
Then, $\Lp{\Psi}$ is a separable, uniformly convex and reflexive Banach space (see \cite[Theorems 3.3.7, 3.5.2
and 3.6.6]{HH.book}). Moreover, we have the following relation between the modular $\rho_{\Psi}$ and the norm $\|\cdot\|_{L^{\Psi}(\Omega)}$ (see \cite[Proposition 2.3]{HH25}).
\begin{proposition}\label{prop.nor-mod.D}
	Let $u, u_n\in\Lp{\Psi}$ ($n\in\N$). Then, the following assertions hold:
	\begin{enumerate}
		\item[\textnormal{(i)}]
		if $u\neq 0$, then $\|u\|_{L^{\Psi}(\Omega)}=\lambda$ if and only if $ \rho_{\Psi}(\frac{u}{\lambda})=1$;
		\item[\textnormal{(ii)}]
		$\|u\|_{L^{\Psi}(\Omega)}<1$ (resp.\,$>1$, $=1$) if and only if $ \rho_{\Psi}(u)<1$ (resp.\,$>1$, $=1$);
		\item[\textnormal{(iii)}]
		if $\|u\|_{L^{\Psi}(\Omega)}<1$, then $\|u\|_{L^{\Psi}(\Omega)}^{ \beta^+}\leqslant \rho_{\Psi}(u)\leqslant\|u\|_{L^{\Psi}(\Omega)}^{ \alpha^-}$;
		\item[\textnormal{(iv)}]
		if $\|u\|_{L^{\Psi}(\Omega)}>1$, then $\|u\|_{L^{\Psi}(\Omega)}^{ \alpha^-}\leqslant \rho_{\Psi}(u)\leqslant\|u\|_{L^{\Psi}(\Omega)}^{ \beta^+}$;
		\item[\textnormal{(v)}] $\|u_n\|_{L^{\Psi}(\Omega)}\to 0$ as $n\to \infty$ if and only if $\rho_{\Psi}(u_n)\to 0$ as $n\to \infty$.
	\end{enumerate}
\end{proposition}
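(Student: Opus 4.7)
The central observation is that $\Psi(x,\cdot)$ is a sum of two power functions with positive coefficients, so it is continuous, strictly increasing, convex, vanishes at $0$ and tends to $\infty$. The key scaling inequalities that drive everything are the following: for any $\lambda>1$,
\begin{equation*}
\lambda^{\alpha^-}\Psi(x,t)\leq \Psi(x,\lambda t)\leq \lambda^{\beta^+}\Psi(x,t),
\end{equation*}
and for any $\lambda\in(0,1)$ the inequalities are reversed with $\alpha^-$ and $\beta^+$ swapped in the exponents. These follow directly from pulling the factor $\lambda^{\alpha(x)}$ or $\lambda^{\beta(x)}$ out of each summand of $\Psi$ and then using $\alpha^-\leq \alpha(x)\leq \beta(x)\leq \beta^+$. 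Integrating against $\diff x$ transports these bounds to $\rho_\Psi$.

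For \textnormal{(i)}, I would study the map $\tau\mapsto \rho_\Psi(u/\tau)$ for $u\neq 0$. From the scaling inequalities, this map is finite for $\tau$ large (dominated by $\tau^{-\alpha^-}\rho_\Psi(u)$-type bound once $\tau>1$) and blows up as $\tau\to 0^+$; it is continuous by dominated convergence and strictly decreasing by the strict monotonicity of $\Psi(x,\cdot)$. Hence there is a unique $\tau_0>0$ with $\rho_\Psi(u/\tau_0)=1$, and by the definition of the Luxemburg norm this $\tau_0$ coincides with $\|u\|_{L^\Psi(\Omega)}$.

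For \textnormal{(ii)}, the three cases are handled by combining the definition of the infimum with the scaling inequalities. For example, if $\rho_\Psi(u)<1$, then choosing $\tau<1$ sufficiently close to $1$ gives $\rho_\Psi(u/\tau)\leq \tau^{-\beta^+}\rho_\Psi(u)\leq 1$, so $\|u\|_{L^\Psi(\Omega)}\leq \tau<1$; the converse uses that if $\|u\|_{L^\Psi(\Omega)}<1$ then by \textnormal{(i)} we may plug $\lambda=1/\|u\|_{L^\Psi(\Omega)}>1$ into the scaling inequality to obtain $\rho_\Psi(u)\leq \|u\|_{L^\Psi(\Omega)}^{\alpha^-}<1$. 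The cases $=1$ and $>1$ are analogous, and the $=1$ case also drops out directly from part \textnormal{(i)} applied with $\lambda=1$.

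For \textnormal{(iii)} and \textnormal{(iv)}, the same substitution $\lambda=1/\|u\|_{L^\Psi(\Omega)}$ into the appropriate scaling inequality (with $\lambda>1$ in case \textnormal{(iii)} and $\lambda<1$ in case \textnormal{(iv)}), combined with $\rho_\Psi(u/\|u\|_{L^\Psi(\Omega)})=1$ from \textnormal{(i)}, yields the desired two-sided estimates after rearranging. Finally, \textnormal{(v)} is a direct consequence of \textnormal{(ii)} and \textnormal{(iii)}: convergence of the norm to $0$ allows us to assume $\|u_n\|_{L^\Psi(\Omega)}<1$ eventually, and then $\rho_\Psi(u_n)\leq \|u_n\|_{L^\Psi(\Omega)}^{\alpha^-}\to 0$; conversely $\rho_\Psi(u_n)\to 0$ forces $\rho_\Psi(u_n)<1$, hence by \textnormal{(ii)} $\|u_n\|_{L^\Psi(\Omega)}<1$, and then $\|u_n\|_{L^\Psi(\Omega)}^{\beta^+}\leq \rho_\Psi(u_n)\to 0$. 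The only mild subtlety is keeping the direction of the exponents straight when $\lambda$ crosses $1$; there is no genuine obstacle, which is why the paper cites this as a standard fact and simply attributes it to \cite{HH25}.
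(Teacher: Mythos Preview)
Your argument is correct and is precisely the standard route for proving modular--norm relations in Musielak--Orlicz spaces built from variable-exponent power functions. The paper itself does not give a proof of this proposition; it simply cites \cite[Proposition~2.3]{HH25}, so there is nothing to compare beyond noting that your sketch is exactly the kind of proof one finds in that reference.
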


The Musielak-Orlicz Sobolev space  $W^{1,\Psi}(\Omega)$ is defined by
\begin{align*}
	W^{1,\Psi}(\Omega):=\left\{u \in L^{\Psi}(\Omega):|\nabla u| \in L^{\Psi}(\Omega)\right\},
\end{align*}
endowed with the norm
\begin{align*}
	\|u\|_{W^{1,\Psi}(\Omega)}:=\inf \left\{\tau>0: \rho_{1,\Psi} \left(\frac{u}{\tau}\right) \leq 1\right\},
\end{align*}
where $\rho_{1,\Psi}:W^{1,\Psi}(\Omega) \to \mathbb{R}$ is the modular defined by
\begin{align*}
	\rho_{1,\Psi}(u):=  \int_\Omega \big[\Psi(x,|\nabla u|)+\Psi(x,|u|)\big]\diff x.
\end{align*}
We also have the relation between the modular $\rho_{1,\Psi}$ and the norm $\|\cdot\|_{W^{1,\Psi}(\Omega)}$ as follows.
\begin{proposition}[\cite{HH25}]\label{Prop_m-n_W(1,H)}  
	For any $u \in W^{1,\Psi}(\Omega) $, we have
	\begin{enumerate}
		\item[\textnormal{(i)}]
		if $u\neq 0$, then $\|u\|_{W^{1,\Psi}(\Omega)}=\lambda$ if and only if $ \rho_{1,\Psi}(\frac{u}{\lambda})=1$;
		\item[\textnormal{(ii)}]
		$\|u\|_{ W^{1, \Psi}(\Omega)}<1$ (resp.\, $>1$, $=1$)  if and only if	$\rho_{1,\Psi}(u)<1$ (resp.\,$>1, $=1$)$;
		\item[\textnormal{(iii)}]
		if $\|u\|_{ W^{1, \Psi}(\Omega)}<1 $, then $\|u\|^{\beta^+}_{ W^{1, \Psi}(\Omega)} \leq \rho_{1,\Psi}(u) \leq \|u\|^{\alpha^-}_{ W^{1, \Psi}(\Omega)}$;
		\item[\textnormal{(iv)}]
		if $\|u\|_{ W^{1, \Psi}(\Omega)}>1 $, then $\|u\|^{\alpha^-}_{ W^{1, \Psi}(\Omega)} \leq \rho_{1,\Psi}(u) \leq \|u\|^{\beta^+}_{ W^{1, \Psi}(\Omega)}$.
	\end{enumerate}
\end{proposition}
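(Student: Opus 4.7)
The plan is to mirror the argument behind Proposition~\ref{prop.nor-mod.D}; the only new feature is that the modular $\rho_{1,\Psi}$ now integrates $\Psi(x,|\nabla u|)+\Psi(x,|u|)$ rather than $\Psi(x,|u|)$ alone, and the gradient scales linearly under constant multiplication, i.e.\ $|\nabla(tu)|=t|\nabla u|$. The whole argument therefore reduces to a pointwise two-sided scaling inequality for $\Psi$ combined with continuity and strict monotonicity of the map $\lambda\mapsto\rho_{1,\Psi}(u/\lambda)$.

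First I would record the following scaling inequality, which is immediate from $\Psi(x,t)=w_1(x)t^{\alpha(x)}+w_2(x)t^{\beta(x)}$ and $1<\alpha^-\le \alpha(\cdot)\le\alpha^+\le\beta^-\le\beta(\cdot)\le\beta^+$: for every $t,s\ge 0$ and a.e.\ $x\in\Omega$,
\[
\min\bigl\{t^{\alpha^-},t^{\beta^+}\bigr\}\Psi(x,s)\le \Psi(x,ts)\le \max\bigl\{t^{\alpha^-},t^{\beta^+}\bigr\}\Psi(x,s).
\]
Since $|\nabla(tu)|=t|\nabla u|$, integrating yields the lifted version
\[
\min\bigl\{t^{\alpha^-},t^{\beta^+}\bigr\}\rho_{1,\Psi}(u)\le \rho_{1,\Psi}(tu)\le \max\bigl\{t^{\alpha^-},t^{\beta^+}\bigr\}\rho_{1,\Psi}(u).
\]

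To prove (i), I would fix $u\neq 0$ and set $\varphi(\lambda):=\rho_{1,\Psi}(u/\lambda)$ on $(0,\infty)$. Applying the lifted inequality with $t=1/\lambda$, and using $\rho_{1,\Psi}(u)>0$ (which holds because $u\neq 0$ on a set of positive measure, so $\int_\Omega\Psi(x,|u|)\diff x>0$), one obtains $\varphi(\lambda)\to+\infty$ as $\lambda\to 0^+$ and $\varphi(\lambda)\to 0$ as $\lambda\to\infty$, together with strict monotonicity of $\varphi$. Continuity of $\varphi$ follows from the dominated convergence theorem, with the scaling upper bound supplying an integrable majorant on any compact $\lambda$-interval in $(0,\infty)$. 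By the intermediate value theorem there exists a unique $\lambda^\ast>0$ with $\varphi(\lambda^\ast)=1$, and the definition of the Luxemburg-type norm forces $\|u\|_{W^{1,\Psi}(\Omega)}=\lambda^\ast$, proving (i).

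Items (ii)--(iv) then follow mechanically. Strict monotonicity of $\varphi$ combined with $\varphi(\|u\|_{W^{1,\Psi}(\Omega)})=1$ gives $\|u\|_{W^{1,\Psi}(\Omega)}<1\Leftrightarrow\rho_{1,\Psi}(u)<1$, and analogously for the $>1$ and $=1$ cases, yielding (ii). For (iii) and (iv) I would apply the lifted scaling inequality with $t=\|u\|_{W^{1,\Psi}(\Omega)}$ to $u/\|u\|_{W^{1,\Psi}(\Omega)}$; combined with $\rho_{1,\Psi}\bigl(u/\|u\|_{W^{1,\Psi}(\Omega)}\bigr)=1$ this gives
\[
\min\bigl\{\|u\|^{\alpha^-}_{W^{1,\Psi}(\Omega)},\|u\|^{\beta^+}_{W^{1,\Psi}(\Omega)}\bigr\}\le \rho_{1,\Psi}(u)\le \max\bigl\{\|u\|^{\alpha^-}_{W^{1,\Psi}(\Omega)},\|u\|^{\beta^+}_{W^{1,\Psi}(\Omega)}\bigr\},
\]
and splitting into $\|u\|_{W^{1,\Psi}(\Omega)}<1$ versus $\|u\|_{W^{1,\Psi}(\Omega)}>1$ produces (iii) and (iv) respectively. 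The only delicate point is the continuity of $\varphi$, handled by a standard dominated convergence argument, and the rest of the proof is purely arithmetic.
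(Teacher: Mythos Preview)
The paper does not supply its own proof of this proposition; it is simply quoted from \cite{HH25}. Your argument is the standard modular--norm argument for Musielak--Orlicz spaces and is correct, so there is nothing to compare against.

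One small inaccuracy worth flagging: you write $\alpha^+\le\beta^-$ as part of the hypotheses, but the paper only assumes $\alpha(\cdot)<\beta(\cdot)$ pointwise, which does not imply $\alpha^+\le\beta^-$. Fortunately this is harmless: your key scaling inequality
\[
\min\bigl\{t^{\alpha^-},t^{\beta^+}\bigr\}\Psi(x,s)\le \Psi(x,ts)\le \max\bigl\{t^{\alpha^-},t^{\beta^+}\bigr\}\Psi(x,s)
\]
only requires $\alpha^-\le\alpha(x)$ and $\beta(x)\le\beta^+$ (together with $\alpha^-,\beta^+\ge 1$ for the case split on $t$), so the rest of the proof goes through verbatim once you drop the unnecessary chain $\alpha^+\le\beta^-$.
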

We denote by $W_0^{1,\Psi}(\Omega)$ the closure of $C_c^\infty(\Omega)$ in $W^{1,\Psi}(\Omega)$. As shown in \cite[Theorem 6.1.4]{HH.book}, $W^{1,\Psi}(\Omega)$ and $W_0^{1,\Psi}(\Omega)$ are separable, uniformly convex and reflexive Banach spaces. 

Note that when $\Psi(x,t)=t^{\alpha(x)},$ the spaces $\Lp{\Psi}$, $W^{1,\Psi}(\Omega)$ and $W_0^{1,\Psi}(\Omega)$ become the well-studied generalized Lebesgue-Sobolev space $\Lp{\alpha(\cdot)}$, $W^{1,\alpha(\cdot)}(\Omega)$ and $W_0^{1,\alpha(\cdot)}(\Omega)$, respectively (see e.g., \cite{Diening}), and in this case we shall write $\|\cdot\|_{L^{\alpha(\cdot)}(\Omega)}$ in place of $\|\cdot\|_{L^{\Psi}(\Omega)}$.  The next two results on generalized Lebesgue spaces will be frequently used in the next sections.
\begin{proposition}[\cite{fanzhao2001}]\label{prop.Holder} It holds that
	\begin{equation*}
		\left|\int_\Omega uv \diff x\right|\leq 2 \|u\|_{L^{\alpha(\cdot)}(\Omega)}\|v\|_{L^{\alpha'(\cdot)}(\Omega)},\quad \forall u\in L^{\alpha(\cdot)}(\Omega),\ \forall v\in  L^{\alpha'(\cdot)}(\Omega).
	\end{equation*}
\end{proposition}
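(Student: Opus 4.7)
The plan is to establish the variable-exponent Hölder inequality by combining the pointwise Young inequality with the norm-modular relation from Proposition \ref{prop.nor-mod.D} specialized to the Lebesgue case $\Psi(x,t)=t^{\alpha(x)}$. The inequality is trivial when $u=0$ or $v=0$ a.e., so I would assume both $\|u\|_{L^{\alpha(\cdot)}(\Omega)}$ and $\|v\|_{L^{\alpha'(\cdot)}(\Omega)}$ are strictly positive, and by a density/truncation argument it suffices to treat bounded compactly supported functions so that all integrals below are finite.

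First I would normalize: set $\tilde{u}:=u/\|u\|_{L^{\alpha(\cdot)}(\Omega)}$ and $\tilde{v}:=v/\|v\|_{L^{\alpha'(\cdot)}(\Omega)}$. By Proposition \ref{prop.nor-mod.D}(i) applied with $\Psi(x,t)=t^{\alpha(x)}$ and with $\Psi(x,t)=t^{\alpha'(x)}$ respectively, we have
\begin{equation*}
\int_\Omega |\tilde{u}(x)|^{\alpha(x)}\,\diff x = 1 \quad \text{and} \quad \int_\Omega |\tilde{v}(x)|^{\alpha'(x)}\,\diff x = 1.
\end{equation*}
Then I would invoke the classical pointwise Young inequality (valid for any conjugate pair $\alpha(x),\alpha'(x)\in(1,\infty)$ with $1/\alpha(x)+1/\alpha'(x)=1$) to bound
\begin{equation*}
|\tilde{u}(x)\tilde{v}(x)| \leq \frac{|\tilde{u}(x)|^{\alpha(x)}}{\alpha(x)} + \frac{|\tilde{v}(x)|^{\alpha'(x)}}{\alpha'(x)} \leq |\tilde{u}(x)|^{\alpha(x)} + |\tilde{v}(x)|^{\alpha'(x)},
\end{equation*}
where the second step uses $1/\alpha(x)\leq 1$ and $1/\alpha'(x)\leq 1$; this is precisely the step that forces the constant to be $2$ rather than $1$, since the two reciprocals are weighted by $x$-dependent factors and cannot be combined cleanly.

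Integrating over $\Omega$ and using the two modular identities above yields $\int_\Omega |\tilde{u}\tilde{v}|\,\diff x \leq 2$. Rescaling back gives
\begin{equation*}
\left|\int_\Omega uv\,\diff x\right| \leq \int_\Omega |uv|\,\diff x = \|u\|_{L^{\alpha(\cdot)}(\Omega)}\|v\|_{L^{\alpha'(\cdot)}(\Omega)}\int_\Omega |\tilde{u}\tilde{v}|\,\diff x \leq 2\,\|u\|_{L^{\alpha(\cdot)}(\Omega)}\|v\|_{L^{\alpha'(\cdot)}(\Omega)},
\end{equation*}
which is the stated inequality. Since every step is either a pointwise elementary inequality or a direct application of Proposition \ref{prop.nor-mod.D}, there is no real obstacle here; the only subtle point worth flagging is that the constant $2$ is an artifact of the $x$-dependence of $\alpha(\cdot)$, and a more refined analysis (producing a constant depending on $\alpha^-,\alpha^+$) is possible but unnecessary for our purposes.
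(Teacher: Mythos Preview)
Your argument is correct and is exactly the standard proof of the variable-exponent H\"older inequality. Note, however, that the paper does not give its own proof of this proposition: it is stated with a citation to \cite{fanzhao2001} and used as a known preliminary result. Your normalization plus pointwise Young's inequality is precisely the argument one finds in that reference (and in \cite{Diening}), so there is nothing to compare.
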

\begin{proposition}[\cite{Diening}]\label{DHHR} Let $\alpha,\beta\in C_+(\overline{\Omega})$ such that $\alpha(x)\le \beta(x)$ for all $x\in\overline{\Omega}.$ Then, we have 
	$$\|u\|_{L^{\alpha(\cdot)}(\Omega)}\le
	2\left(1+|\Omega|\right)\|u\|_{L^{\beta(\cdot)}(\Omega)},\quad\forall u\in L^{\alpha(\cdot)}(\Omega)\cap L^{\beta(\cdot)}(\Omega).$$
\end{proposition}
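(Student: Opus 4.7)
My plan is to split $\Omega$ according to whether $|u|$ exceeds $1$---exactly the threshold across which the scalar monotonicity of $t\mapsto t^{\gamma}$ in $\gamma$ reverses direction---and to control each piece separately at the level of the modular, then combine via the triangle inequality. By the homogeneity of the Luxemburg norm I may, after rescaling, assume $\|u\|_{L^{\beta(\cdot)}(\Omega)}=1$, so that Proposition \ref{prop.nor-mod.D}(ii) gives $\rho_{\beta(\cdot)}(u)\le 1$.

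Write $\Omega=E_1\cup E_2$ with $E_1:=\{x\in\Omega:|u(x)|\le 1\}$ and $E_2:=\Omega\setminus E_1$. On $E_2$, the pointwise inequality $|u(x)|^{\alpha(x)}\le|u(x)|^{\beta(x)}$, valid because $|u|>1$ and $\alpha\le\beta$, integrates to $\rho_{\alpha(\cdot)}(u\chi_{E_2})\le\rho_{\beta(\cdot)}(u)\le 1$, whence $\|u\chi_{E_2}\|_{L^{\alpha(\cdot)}(\Omega)}\le 1$ by Proposition \ref{prop.nor-mod.D}(ii). On $E_1$, I use $|u\chi_{E_1}|\le 1$ together with $\alpha(x)\ge 1$: setting $\tau:=1+|\Omega|$, the pointwise bound $(|u|/\tau)^{\alpha(x)}\le(1/\tau)^{\alpha(x)}\le 1/\tau$ (coming from $1/\tau\le 1$ and $\alpha(x)\ge 1$) yields
$$\rho_{\alpha(\cdot)}\!\left(\frac{u\chi_{E_1}}{\tau}\right)\le\int_{E_1}\frac{1}{1+|\Omega|}\,\diff x\le\frac{|\Omega|}{1+|\Omega|}\le 1,$$
so Proposition \ref{prop.nor-mod.D}(ii) again gives $\|u\chi_{E_1}\|_{L^{\alpha(\cdot)}(\Omega)}\le 1+|\Omega|$.

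Adding the two bounds via the triangle inequality produces $\|u\|_{L^{\alpha(\cdot)}(\Omega)}\le\|u\chi_{E_1}\|_{L^{\alpha(\cdot)}(\Omega)}+\|u\chi_{E_2}\|_{L^{\alpha(\cdot)}(\Omega)}\le(1+|\Omega|)+1\le 2(1+|\Omega|)$, and scaling back by $\|u\|_{L^{\beta(\cdot)}(\Omega)}$ finishes the proof. The argument relies only on the modular-norm correspondence in Proposition \ref{prop.nor-mod.D} and on the elementary observation that the comparison between $t^{\alpha(x)}$ and $t^{\beta(x)}$ flips across $t=1$; this flip is the only structural subtlety, so there is no significant obstacle, and in particular no Hölder-type inequality is needed.
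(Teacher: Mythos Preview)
Your proof is correct. Note, however, that the paper does not supply its own proof of this proposition: it is quoted directly from the reference \cite{Diening}, so there is no in-paper argument to compare against. Your argument---normalize, split on $\{|u|\le 1\}$ versus $\{|u|>1\}$, bound each piece at the modular level, and recombine via the triangle inequality---is precisely the standard route for this embedding in variable-exponent Lebesgue spaces, and it matches in spirit what one finds in \cite{Diening}.
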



\subsection{The Musielak-Orlicz Sobolev spaces $W^{1,\mathcal{H}}_V(\RN)$ and $X_V$}${}$ \label{subsec.sol.space}

\vskip5pt
In this subsection, we present the definition and basic properties of the solution space for problem~\eqref{eq1} established in \cite{HH25}. Let us denote
\begin{equation}\label{def_H}
	\mathcal{H}(x,t):=t^{p(x)} +a(x)t^{q(x)} \quad \text{for} \ (x,t) \in \RN \times [0,\infty),
\end{equation}
where the functions $p,q,a$ satisfy the following assumption:
\begin{enumerate}[label=\textnormal{(D)},ref=\textnormal{D}]
	\item\label{D}
	$p,q\in C^{0,1}(\RN) $ such that $1<p(x)<q(x)<N$  for all $x\in \RN,$ $ \displaystyle \left(\frac{q}{p}\right)^+<1+\frac{1}{N}$ and $0  \leq a(\cdot) \in C^{0,1}(\RN) \cap L^\infty(\RN) $.
\end{enumerate}
Let $V\in L^1_{\loc}\left(\R^N\right)$ be such that $V(\cdot) \geq 0$ and $V\ne 0$. We define the space $W^{1,\mathcal{H}}_V(\RN)$ as
\begin{align*}
	W^{1,\mathcal{H}}_V(\RN):=\left\{u\in W_{\loc}^{1,1}(\RN):~ \rho_V(u)<\infty\right\},
\end{align*}
where the modular $\rho_V$ is defined by
\begin{align*}
	\rho_V(u):=\int_{\RN} \big[\mathcal{H}\left(x,|\nabla u|\right) + V(x)\mathcal{H}\left(x,|u|\right)\big] \diff x\quad\text{for}\ u\in W_{\loc}^{1,1}(\RN).
\end{align*}
Then, $W^{1,\mathcal{H}}_V(\RN)$ is a normed space with the norm
\begin{align}\label{norm}
	\|u\|:=\inf\left\{ \tau >0: \rho_V\left(\frac{u}{\tau}\right) \leq 1\right\}.
\end{align}
On this space we have
\begin{equation}\label{modular-norm.XV}
	\rho_V\left(\frac{u}{\|u\|}\right)=1, \quad \forall u \in W_V^{1,\mathcal{H}}(\RN) \setminus \{0\},
\end{equation}
and for $u\in W^{1,\mathcal{H}}_V(\RN)$ with $\operatorname{supp}(u)\subset\Omega$, it holds that
\begin{equation}\label{m-n}
	\min \left\{\|u\|^{p_\Omega^-},\|u\|^{q_\Omega^+}\right\}\leq \rho_V(u)\leq \max \left\{\|u\|^{p_\Omega^-},\|u\|^{q_\Omega^+}\right\}.
\end{equation}

Next we present essential embeddings established in \cite{HH25,HW2022}. Define $\mathcal{F}:\RN \times [0,\infty)\to [0,\infty)$ as
\begin{equation}\label{def_F}
	\mathcal{F}(x,t):=t^{r(x)} +a(x)^{\frac{s(x)}{q(x)}}t^{s(x)} ~\text{for} ~ (x,t) \in \RN \times [0,\infty),
\end{equation}
where $r,s \in C_+(\RN)$ satisfying $r(\cdot)< s(\cdot)$ in $\RN$. 
For the case of $\essinf_{x \in \R^N}V(x)>0$, we have the following.
\begin{proposition}[\cite{HH25}]\label{P.E1}
	Let \eqref{D} hold, and let $V\in L^1_{\loc}\left(\R^N\right)$ be such that $\essinf_{x \in \R^N}V(x)>0$.  Then, $W^{1,\mathcal{H}}_V(\RN)$ is a separable reflexive Banach space satisfying
	\begin{equation}\label{Embed.W_V0}
		W^{1,\mathcal{H}}_V(\RN)\hookrightarrow W^{1,\mathcal{H}}(\R^N) \hookrightarrow L^{\mathcal{F}}(\R^N)
	\end{equation}
for any $r,s \in C(\RN)$ satisfying $p(x) \leq r(x) \leq p^\ast(x)$ and  $q(x) \leq s(x) \leq q^\ast(x)$ for all  $x \in \RN.$  
\end{proposition}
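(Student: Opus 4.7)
The plan has three parts: the two continuous embeddings, the Banach space structure, and separability/reflexivity.

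For the embeddings, I would first exploit the coercivity of the potential. Writing $c_0 := \essinf_{\R^N} V > 0$, the pointwise inequality
$$\mathcal{H}(x,|\nabla u|) + \mathcal{H}(x,|u|) \le \mathcal{H}(x,|\nabla u|) + c_0^{-1} V(x)\mathcal{H}(x,|u|)$$
gives $\rho_{1,\mathcal{H}}(u) \le \max(1, c_0^{-1})\,\rho_V(u)$ for every $u \in W^{1,\mathcal{H}}_V(\R^N)$, where $\rho_{1,\mathcal{H}}$ is as in Section~\ref{Subsec.M.O.S space} with $\Psi=\mathcal{H}$. Feeding this into the modular-norm relations of Proposition~\ref{Prop_m-n_W(1,H)} and using \eqref{modular-norm.XV} then yields the first continuous inclusion $W^{1,\mathcal{H}}_V(\R^N) \hookrightarrow W^{1,\mathcal{H}}(\R^N)$. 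The second inclusion $W^{1,\mathcal{H}}(\R^N) \hookrightarrow L^{\mathcal{F}}(\R^N)$ is the Musielak-Orlicz Sobolev embedding established on $\R^N$ in \cite{HH25} (extending \cite{HW2022} from bounded domains) for the two extremal pairs $(r,s)=(p,q)$ and $(r,s)=(p^*,q^*)$. For the intermediate ranges $p\le r\le p^*$ and $q\le s\le q^*$, I would apply Proposition~\ref{DHHR} locally on a cover by balls together with $a\in L^\infty(\R^N)$ to interpolate between these two endpoint embeddings.

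Next, for the Banach space assertion I would verify that $\rho_V$ is a convex, left-continuous modular on $\{u\in W^{1,1}_{\loc}(\R^N):\rho_V(u)<\infty\}$, which makes the Luxemburg functional \eqref{norm} a norm and validates \eqref{modular-norm.XV}--\eqref{m-n} through the $\Delta_2$-type scaling of $\mathcal{H}$ afforded by $1<p^-\le q^+<\infty$. Completeness would then follow from a Fatou-type argument: a $\|\cdot\|$-Cauchy sequence $\{u_n\}$ is Cauchy in $W^{1,\mathcal{H}}(\R^N)$ by the first embedding, so it converges to some $u$ there and (up to a subsequence) a.e., after which Fatou's lemma applied to $\rho_V(u_n-u_m)$ with $m\to\infty$ upgrades the convergence to $\|u_n-u\|\to 0$ and shows $u\in W^{1,\mathcal{H}}_V(\R^N)$.

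Finally, for separability and reflexivity I would consider the isometry
$$T:W^{1,\mathcal{H}}_V(\R^N)\to L^{\mathcal{H}}(\R^N;\R^N)\times L^{\widetilde{\mathcal{H}}}(\R^N),\qquad T(u)=(\nabla u, u),$$
where $\widetilde{\mathcal{H}}(x,t):=V(x)\mathcal{H}(x,t)$. Since $\mathcal{H}$ and $\widetilde{\mathcal{H}}$ satisfy a $\Delta_2$-condition uniformly in $x$, both factor spaces can be shown to be separable reflexive Banach spaces by the same scheme used for Proposition~\ref{prop.nor-mod.D}, now adapted to the $V$-weighted setting. The image $T(W^{1,\mathcal{H}}_V(\R^N))$ is closed in the product by the Fatou argument of the previous paragraph, hence it inherits both properties, which transfer back to $W^{1,\mathcal{H}}_V(\R^N)$ via $T$. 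The main technical obstacle is precisely establishing that $L^{\widetilde{\mathcal{H}}}(\R^N)$ is separable and reflexive when the weight $V$ is only locally integrable, which lies outside the framework of Section~\ref{Subsec.M.O.S space} where the weights are required to belong to $L^1(\Omega)\cup L^\infty(\Omega)$. I would circumvent this by a local-to-global argument: on each ball $B_k$ one has $V\in L^1(B_k)$ so that the framework of Section~\ref{Subsec.M.O.S space} applies, and the uniform lower bound $V\ge c_0$ lets one patch these local statements into a global one.
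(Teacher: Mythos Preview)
The paper does not prove this proposition; it is imported verbatim from \cite{HH25} (note the citation in the proposition heading), so there is no in-paper argument to compare your sketch against. Your outline is a reasonable reconstruction of how such a result is typically established. Two comments on specific steps:

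For the second embedding at intermediate exponents, your proposal to cover $\R^N$ by balls and interpolate locally via Proposition~\ref{DHHR} is heavier than necessary. The elementary pointwise bound $t^{r(x)}\le t^{p(x)}+t^{p^*(x)}$ for $p(x)\le r(x)\le p^*(x)$, together with its weighted analogue $a(x)^{s(x)/q(x)}t^{s(x)}\le a(x)t^{q(x)}+a(x)^{q^*(x)/q(x)}t^{q^*(x)}$, gives $\mathcal{F}(x,t)\le \mathcal{H}(x,t)+\mathcal{G}^*(x,t)$ globally, so the intermediate case follows at once from the two endpoint embeddings with no localization. This is exactly the device the paper itself uses in the proof of the neighboring Proposition~\ref{RmkEm}.

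For separability and reflexivity, your product-isometry strategy is the standard one, and you rightly identify the difficulty that $\widetilde{\mathcal{H}}=V\mathcal{H}$ falls outside the $L^1\cup L^\infty$ weight hypothesis of Section~\ref{Subsec.M.O.S space}. Your local-to-global patching is plausible but, as written, does not explain how local separability or reflexivity on each $B_k$ assembles into the global statement; separability does patch (countable union of countable dense sets after a cutoff), but reflexivity does not transfer through such a gluing in any obvious way. A cleaner route is to observe that the uniform convexity of $t\mapsto\mathcal{H}(x,t)$, which comes solely from $1<p^-\le q^+<\infty$, is unaffected by multiplication by the nonnegative weight $V$, so $L^{\widetilde{\mathcal{H}}}(\R^N)$ is uniformly convex (hence reflexive) without any integrability restriction on $V$.
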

When $V\equiv 1$ and the underlying domain is bounded, we have the following.
\begin{proposition}[\cite{HW2022}]\label{P.Eb}
	Let \eqref{D} hold and $\mathcal{F}$ be given by \eqref{def_F}. Let $\Omega$ be a bounded Lipschitz domain in $\RN$. Then, it holds that
	\begin{equation}\label{EC}
		W^{1,\mathcal{H}}(\Omega) \hookrightarrow  L^{\mathcal{F}}(\Omega)
	\end{equation}
	if  $1\leq r(x)\leq p^\ast(x)$ and $1\leq s(x)\leq q^\ast(x)$ for all $x\in\overline{\Omega}$. Furthermore, the embedding \eqref{EC} is compact if $1\leq r(x)< p^\ast(x)$ and $1\leq s(x)< q^\ast(x)$ for all $x\in\overline{\Omega}$.
\end{proposition}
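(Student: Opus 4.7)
The plan is to decompose the modular as
\[
\int_\Omega \mathcal{F}(x,|u|)\diff x = \int_\Omega |u|^{r(x)}\diff x + \int_\Omega a(x)^{s(x)/q(x)}|u|^{s(x)}\diff x,
\]
to handle each summand by reducing it to a classical variable exponent Sobolev embedding followed by the Lebesgue-type interpolation of Proposition \ref{DHHR}, and finally to upgrade to compactness by interpolating the critical continuous bound with strong convergence at a subcritical level.

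For the first summand, the pointwise inequality $\mathcal{H}(x,t) \ge t^{p(x)}$ gives the continuous inclusion $W^{1,\mathcal{H}}(\Omega) \hookrightarrow W^{1,p(\cdot)}(\Omega)$. Since $p \in C^{0,1}(\R^N)$ is in particular log-Hölder continuous and $\Omega$ is a bounded Lipschitz domain, the classical variable exponent Sobolev embedding yields $W^{1,p(\cdot)}(\Omega) \hookrightarrow L^{p^\ast(\cdot)}(\Omega)$, and Proposition \ref{DHHR} with $r(\cdot) \le p^\ast(\cdot)$ and $|\Omega| < \infty$ finishes this piece. For the second summand I would pass to the auxiliary function $v := a^{1/q(\cdot)} u$; since $|v|^{q(x)} \le \mathcal{H}(x,|u|)$, one already has an $L^{q(\cdot)}(\Omega)$ bound for $v$. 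The heart of the matter is to prove the critical embedding $W^{1,\mathcal{H}}(\Omega) \hookrightarrow L^{\mathcal{G}^\ast}(\Omega)$ with $\mathcal{G}^\ast(x,t) := t^{p^\ast(x)} + a(x)^{q^\ast(x)/q(x)} t^{q^\ast(x)}$; once this is available, Proposition \ref{DHHR} together with $s(\cdot) \le q^\ast(\cdot)$ concludes.

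The critical embedding is the main obstacle, because the exponent $q^\ast(x)/q(x)$ multiplying $t^{q^\ast(x)}$ is forced by scaling and cannot be relaxed. My plan is a localization argument: cover $\overline{\Omega}$ by finitely many small balls $B_j$ on which the oscillations of $p$, $q$, and $a$ are arbitrarily small. On each $B_j$ freeze $p$ and $q$ at constants and invoke the constant-exponent Sobolev embedding, while the Lipschitz regularity of $a$ allows replacing $a$ by a suitable constant $a_j$ up to uniform multiplicative factors. The assumption $(q/p)^+ < 1+1/N$ from \eqref{D} guarantees $q(x) < p^\ast(x)$ throughout $\overline{\Omega}$, which is precisely what is needed for the two ingredients of the double phase energy to be mutually compatible with the Sobolev embedding. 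A subordinate partition of unity then patches the local estimates into the desired global bound.

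For the compact embedding under the strict conditions $r < p^\ast$ and $s < q^\ast$, take a bounded sequence $(u_n) \subset W^{1,\mathcal{H}}(\Omega)$. A subsequence converges weakly in $W^{1,\mathcal{H}}(\Omega)$, hence also in $W^{1,p(\cdot)}(\Omega)$; the variable exponent Rellich-Kondrachov theorem gives strong convergence in $L^{r(\cdot)}(\Omega)$ and in $L^{q(\cdot)}(\Omega)$ (the latter because $q(x) < p^\ast(x)$ on $\overline{\Omega}$). Combined with $a \in L^\infty(\R^N)$, this produces $a^{1/q(\cdot)} u_n \to a^{1/q(\cdot)} u$ strongly in $L^{q(\cdot)}(\Omega)$, while the continuous embedding established above yields uniform boundedness in $L^{q^\ast(\cdot)}(\Omega)$. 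Pointwise a.e. convergence of a further subsequence, together with the gap $\inf_{\overline{\Omega}}(q^\ast - s) > 0$ and a Vitali/equi-integrability argument, then upgrades convergence to $L^{s(\cdot)}(\Omega)$, completing the compactness of the second summand and hence the full statement.
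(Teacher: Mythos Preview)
The paper does not prove this proposition; it is quoted from \cite{HW2022} and stated without proof, so there is no in-paper argument to compare against. Your outline is broadly in the spirit of the approach in that reference: reduce to the critical embedding $W^{1,\mathcal{H}}(\Omega)\hookrightarrow L^{\mathcal{G}^*}(\Omega)$ by a localization argument, downgrade to subcritical exponents via Proposition~\ref{DHHR}, and recover compactness by interpolating strong subcritical convergence against the uniform critical bound.

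One step in your sketch needs correction. Your claim that on each small ball ``the Lipschitz regularity of $a$ allows replacing $a$ by a suitable constant $a_j$ up to uniform multiplicative factors'' fails exactly where $a$ vanishes, which is the defining feature of the double phase regime: near the zero set of $a$ there is no positive lower bound, so no multiplicative comparison with a constant is available. The localization in \cite{HW2022} instead distinguishes two regimes on each small ball according to whether $\sup a$ is large or small relative to the scale. In the small-$a$ regime the $q$-phase term $a(x)^{q^*(x)/q(x)}|u|^{q^*(x)}$ is absorbed into the $p$-phase via $q(x)<p^*(x)$ (this is precisely where the hypothesis $(q/p)^+<1+\tfrac{1}{N}$ from \eqref{D} is used, as you correctly identify), while in the large-$a$ regime one genuinely has $a\simeq$ const and both phases are controlled by a standard constant-exponent Sobolev inequality. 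With this dichotomy in place of the blanket ``$a\simeq a_j$'' step, your plan is sound.
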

Furthermore, we present an inequality, which  is essential for localization arguments in the next sections. Let $Q_{x_0,\varepsilon}$ denote the cube centered at $x_0\in\R^N$ with edges parallel coordinate axes and of length $\varepsilon$.
\begin{proposition}\label{RmkEm} Let \eqref{D} hold, and let $\varepsilon>0$ be given. Then, for any $\mathcal{F}$ given by \eqref{def_F} with $p(x)\leq r(x)\leq p^*(x)$ and $q(x)\leq s(x)\leq q^*(x)$ for all $x \in \RN$, it holds that
	\begin{equation*}
		\|u\|_{L^{\mathcal{F}}(Q_{x_0,\varepsilon})}\leq \bar{C}\|u\|_{W^{1,\mathcal{H}}(Q_{x_0,\varepsilon})},\quad \forall u\in W^{1,\mathcal{H}}(\RN),\ \forall x_0\in\RN,
	\end{equation*}
	where $\bar{C}>0$ depends only on $\varepsilon$, $N$, $p^+$, $p^-$, $q^+$, $q^-$, $\|\nabla p\|_\infty$, $\|\nabla q\|_\infty$, $\|a\|_\infty$ and $\|\nabla a\|_\infty$.
\end{proposition}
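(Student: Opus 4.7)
\medskip

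\noindent\textbf{Proof proposal.}
The plan is to reduce the estimate on the arbitrary cube $Q_{x_0,\varepsilon}$ to one on the fixed cube $Q_{0,\varepsilon}$ by translation, and then to invoke Proposition \ref{P.Eb} on that fixed bounded Lipschitz domain, taking care to justify that the resulting constant can be chosen uniformly in $x_0 \in \R^N$. Throughout, given $x_0 \in \R^N$, I will set $v(y) := u(x_0+y)$ for $y\in Q_{0,\varepsilon}$ and introduce the shifted data $p_{x_0}(y):=p(x_0+y)$, $q_{x_0}(y):=q(x_0+y)$, $a_{x_0}(y):=a(x_0+y)$, together with the corresponding $r_{x_0}$, $s_{x_0}$, $\mathcal{H}_{x_0}$, $\mathcal{F}_{x_0}$. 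A direct change of variables gives
\begin{equation*}
\int_{Q_{x_0,\varepsilon}}\mathcal{F}(x,|u(x)|)\diff x=\int_{Q_{0,\varepsilon}}\mathcal{F}_{x_0}(y,|v(y)|)\diff y,
\end{equation*}
and likewise for the modulars built from $\mathcal{H}$. Consequently the Luxemburg norms transform isometrically: $\|u\|_{L^{\mathcal{F}}(Q_{x_0,\varepsilon})}=\|v\|_{L^{\mathcal{F}_{x_0}}(Q_{0,\varepsilon})}$ and $\|u\|_{W^{1,\mathcal{H}}(Q_{x_0,\varepsilon})}=\|v\|_{W^{1,\mathcal{H}_{x_0}}(Q_{0,\varepsilon})}$.

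Next I would verify that the shifted data $p_{x_0},q_{x_0},a_{x_0}$ satisfy assumption \eqref{D} on $Q_{0,\varepsilon}$, with all structural constants controlled by the same quantities as the originals: $p^-\le p_{x_0}\le p^+$, $q^-\le q_{x_0}\le q^+$, $\|a_{x_0}\|_{L^\infty(Q_{0,\varepsilon})}\le\|a\|_\infty$, and the Lipschitz constants of $p_{x_0},q_{x_0},a_{x_0}$ coincide with those of $p,q,a$ restricted to $x_0+Q_{0,\varepsilon}$, hence are bounded by $\|\nabla p\|_\infty$, $\|\nabla q\|_\infty$, $\|\nabla a\|_\infty$. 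The shifted exponents $r_{x_0},s_{x_0}$ inherit $p_{x_0}(y)\le r_{x_0}(y)\le p_{x_0}^*(y)$ and $q_{x_0}(y)\le s_{x_0}(y)\le q_{x_0}^*(y)$ on $Q_{0,\varepsilon}$. Applying Proposition \ref{P.Eb} to $v$ on the fixed bounded Lipschitz domain $Q_{0,\varepsilon}$ yields a constant $C_{x_0}>0$ such that
\begin{equation*}
\|v\|_{L^{\mathcal{F}_{x_0}}(Q_{0,\varepsilon})}\le C_{x_0}\,\|v\|_{W^{1,\mathcal{H}_{x_0}}(Q_{0,\varepsilon})}.
\end{equation*}

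The main obstacle is the third step: showing that $C_{x_0}$ admits a bound $\bar C$ depending only on the quantities listed in the statement. For this I would inspect the proof of Proposition \ref{P.Eb} in \cite{HW2022}. Its ingredients are (i) the classical variable-exponent Sobolev embedding $W^{1,p(\cdot)}(Q_{0,\varepsilon})\hookrightarrow L^{p^*(\cdot)}(Q_{0,\varepsilon})$ (and its $q(\cdot)$-analogue), whose constant depends only on $N$, $\varepsilon$, the bounds $p^\pm$, $q^\pm$ and the log-H\"older moduli of $p,q$, which in turn are controlled by $\|\nabla p\|_\infty$ and $\|\nabla q\|_\infty$; (ii) pointwise manipulations of the weight $a(\cdot)^{s(\cdot)/q(\cdot)}$, whose dependence is through $\|a\|_\infty$, $\|\nabla a\|_\infty$, $p^\pm$, $q^\pm$; and (iii) the norm-modular relations of Propositions \ref{prop.nor-mod.D}--\ref{Prop_m-n_W(1,H)}, whose multiplicative constants depend only on $p^\pm$ and $q^\pm$. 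Each of these constants is translation-invariant, so the supremum of $C_{x_0}$ over $x_0\in\R^N$ is finite and defines the desired $\bar C=\bar C(\varepsilon,N,p^\pm,q^\pm,\|\nabla p\|_\infty,\|\nabla q\|_\infty,\|a\|_\infty,\|\nabla a\|_\infty)$.

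Finally, translating back via the isometries of the first step gives
\begin{equation*}
\|u\|_{L^{\mathcal{F}}(Q_{x_0,\varepsilon})}=\|v\|_{L^{\mathcal{F}_{x_0}}(Q_{0,\varepsilon})}\le \bar C\,\|v\|_{W^{1,\mathcal{H}_{x_0}}(Q_{0,\varepsilon})}=\bar C\,\|u\|_{W^{1,\mathcal{H}}(Q_{x_0,\varepsilon})},
\end{equation*}
which is the claimed bound, uniform in $x_0$.
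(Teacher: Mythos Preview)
Your approach is essentially the same as the paper's: reduce to the fixed cube $Q_{0,\varepsilon}$ by translation, then invoke a known embedding on that cube while tracking that the constant depends only on the translation-invariant structural data. The paper also argues by translation and by inspecting a proof from the literature (Theorem~2.7 in \cite{HH25} rather than Proposition~\ref{P.Eb} in \cite{HW2022}), working at the modular level to obtain
\[
\int_{Q_{x_0,\varepsilon}} \mathcal{G}^\ast(x,|u|)\,\diff x \le \widetilde{C}\Big[\rho_{1,\mathcal{H}}(u;Q_{x_0,\varepsilon})+1\Big]^{(q^\ast)^+}
\]
with $\widetilde{C}$ depending only on the listed quantities, and then deduces the norm inequality.

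One point the paper handles more cleanly than your sketch: the constant $\bar C$ is claimed to be independent of $r$ and $s$, yet your ingredient (ii) explicitly involves the weight $a(\cdot)^{s(\cdot)/q(\cdot)}$ and you do not justify why the dependence on $s$ (and $r$) drops out. The paper sidesteps this entirely by first using the elementary pointwise bound $\mathcal{F}(x,t)\le \mathcal{G}^\ast(x,t)+\mathcal{H}(x,t)$, so that the inspection is carried out only for the critical integrand $\mathcal{G}^\ast$, which contains no $r$ or $s$. If you want your argument to close completely, either insert this same reduction to $\mathcal{G}^\ast$ before invoking the embedding, or verify explicitly that the constants appearing in the proof of Proposition~\ref{P.Eb} can be bounded using only $p^\pm$, $q^\pm$, $\|a\|_\infty$, $\|\nabla a\|_\infty$ and not the particular choice of $r,s$ in the admissible range.
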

\begin{proof}
	Let $\mathcal{G}^*$ be the critical form of $\mathcal{H}$ defined by 
	\begin{align}\label{def.G}
		\mathcal{G}^*(x,t):=t^{p^*(x)}+a(x)^{\frac{q^*(x)}{q(x)}}t^{q^*(x)} ~ \text{for} ~ x \in \RN ~ \text{and } ~ t \in [0,\infty).
	\end{align}
	A careful inspection of the proof of Theorem 2.7 in \cite{HH25} shows that 
	\begin{equation*} 
		\int_{Q_{0,\varepsilon}} \mathcal{G}^\ast(x,|u|)\diff x  \leq \widetilde{C} \left[\int_{Q_{0,\varepsilon}}\mathcal{H}(x,|\nabla u|)\diff x +\int_{Q_{0,\varepsilon}}\mathcal{H}(x, |u|)\diff x+1\right]^{{(q^\ast)}^+}, \quad \forall u \in W^{1,\mathcal{H}}(\RN),
	\end{equation*}
	where 	$\widetilde{C}=\widetilde{C}(\varepsilon,N,p^+,p^-,q^+,q^-,\|\nabla p\|_\infty,\|\nabla q\|_\infty,\|a\|_\infty,\|\nabla a\|_\infty)$. Then, by a translation we have
	\begin{equation*} 
		\int_{Q_{x_0,\varepsilon}} \mathcal{G}^\ast(x,|u|)\diff x  \leq \widetilde{C} \left[\int_{Q_{x_0,\varepsilon}}\mathcal{H}(x,|\nabla u|)\diff x +\int_{Q_{x_0,\varepsilon}}\mathcal{H}(x, |u|)\diff x+1\right]^{{(q^\ast)}^+}, \quad \forall u \in W^{1,\mathcal{H}}(\RN).
	\end{equation*}
	Since $\mathcal{F}(x,|u|)\leq \mathcal{G}^\ast(x,|u|)+\mathcal{H}(x,|u|)$ for a.a. $x\in\RN$, we obtain from the last estimate that
	\begin{equation*}
		\int_{Q_{x_0,\varepsilon}} \mathcal{F}(x,|u|)\diff x  \leq (\widetilde{C}+1) \left[\int_{Q_{x_0,\varepsilon}}\mathcal{H}(x,|\nabla u|)\diff x +\int_{Q_{x_0,\varepsilon}}\mathcal{H}(x, |u|)\diff x+1\right]^{{(q^\ast)}^+}, \quad \forall u \in W^{1,\mathcal{H}}(\RN).
	\end{equation*}
	Hence, the conclusion follows.
\end{proof}

 In order to broaden the class of potentials $V$, in \cite{HH25} the authors introduced the following assumption on the main operator:
 \begin{enumerate}[label=\textnormal{(O)},ref=\textnormal{O}]
 	\item\label{O} The functions $p,q,a$ fulfill \eqref{D}; $ 0 \leq V(\cdot) \in L^1_{\loc}(\RN)$, and one of two following conditions holds:
 	\begin{enumerate}
 		\item[(i)] there exists $K_0>0$ such that $E_V:=\{x \in \RN:V(x) \leq K_0\} \not = \emptyset$ and $|E_V| < \infty$;
 		\item[(ii)] there exists $R>0$ such that $\displaystyle \inf_{x\in B_R^c} V(x)=V_0>0$. 
 	\end{enumerate}
Furthermore, when $\essinf_{x \in \R^N}V(x)=0$, there exists $p_\infty \in (1,N)$ such that 
\begin{align*}
	\sup_{x \in \RN} |p(x)-p_\infty|\log(e+|x|)<\infty.
\end{align*}
 \end{enumerate}
We denote by $X_V$ the closure of $C_c^\infty(\R^N)$ in $W_V^{1,\mathcal{H}}(\R^N)$. The next result was proved in \cite{HH25}.
\begin{proposition}\label{P.E2}
	Let \eqref{O} hold. Then, the following assertions hold:
\begin{enumerate}
	\item[\textnormal{(i)}] $X_V$ is a separable reflexive Banach space.
	\item[\textnormal{(ii)}] For $r,s \in C(\RN)$ satisfying $p(x) \leq r(x) \leq p^\ast(x)$ and $q(x) \leq s(x) \leq q^\ast(x)$ for all  $x \in \RN$, we have
	\begin{equation}\label{E-X}
		X_V \hookrightarrow W^{1,\mathcal{H}}(\R^N) \hookrightarrow L^{\mathcal{F}}(\R^N).
	\end{equation} 
\end{enumerate}
\end{proposition}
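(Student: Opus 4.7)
My plan is to split Proposition \ref{P.E2} into three subtasks: establishing the Banach/reflexive/separable structure of $X_V$; proving the first embedding $X_V\hookrightarrow W^{1,\mathcal{H}}(\R^N)$; and proving the second embedding $W^{1,\mathcal{H}}(\R^N)\hookrightarrow L^{\mathcal{F}}(\R^N)$. Each task will be reduced to tools already available in the paper or to standard Musielak--Orlicz machinery.

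For (i), I would first verify that the ambient space $W_V^{1,\mathcal{H}}(\R^N)$ itself is a reflexive separable Banach space, after which the assertion for $X_V$ follows because $X_V$ is by definition a closed subspace. Concretely, I would view the linear map $u\mapsto(\nabla u,u)$ as an isometric embedding of $W_V^{1,\mathcal{H}}(\R^N)$ into the product $L^{\mathcal{H}}(\R^N;\R^N)\times Y_V$, where $Y_V$ is the Musielak--Orlicz space associated with the generalized $\Phi$-function $(x,t)\mapsto V(x)\mathcal{H}(x,t)$ (well defined since $V\in L^1_{\loc}(\R^N)$ and $V\ge 0$). Both factors are reflexive and separable by the same arguments as for Proposition \ref{prop.nor-mod.D} applied to these $\Phi$-functions. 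Completeness of the image is obtained by showing that a Cauchy sequence in the modular norm converges in measure along a subsequence, identifying the limit, and checking $\rho_V$-lower semicontinuity under a.e.\ convergence.

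For the first embedding in (ii), I need to dominate $\int_{\R^N}\mathcal{H}(x,|u|)\,dx$ by $\rho_V(u)$ for $u\in C_c^\infty(\R^N)$, then extend by density. In case \eqref{O}(i), I would split $\R^N=E_V\cup E_V^c$: on $E_V^c$ the bound $V\geq K_0$ yields $\int_{E_V^c}\mathcal{H}(x,|u|)\,dx\le K_0^{-1}\int_{\R^N}V(x)\mathcal{H}(x,|u|)\,dx$; on $E_V$, which has finite measure, I would combine variable-exponent Sobolev embeddings on $\R^N$ (justified by the log-Hölder condition of \eqref{O} when $\essinf V=0$) with Proposition \ref{DHHR} to dominate $\int_{E_V}|u|^{p(x)}\,dx$ and $\int_{E_V}a(x)|u|^{q(x)}\,dx$ by $\int_{\R^N}\mathcal{H}(x,|\nabla u|)\,dx$. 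Case \eqref{O}(ii) is analogous: split at $\partial B_R$, use $V\geq V_0$ on $B_R^c$, and invoke Proposition \ref{P.Eb} on the bounded Lipschitz domain $B_R$. The second embedding $W^{1,\mathcal{H}}(\R^N)\hookrightarrow L^{\mathcal{F}}(\R^N)$ follows from a covering argument based on Proposition \ref{RmkEm}: choose $\varepsilon>0$ and a lattice of cubes $\{Q_{x_k,\varepsilon}\}_{k\in\N}$ covering $\R^N$ with uniformly bounded overlap number $\theta$; applying the local estimate on each cube and summing turns a local modular bound into a global one, from which Proposition \ref{Prop_m-n_W(1,H)} yields the desired norm inequality.

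The main obstacle I expect is the first embedding in case \eqref{O}(i): on $E_V$ the potential provides no coercivity, yet $E_V$ is only assumed to have finite measure, so one cannot directly apply the bounded-domain embedding of Proposition \ref{P.Eb}. The technical heart of the argument will be a careful interplay between a global variable-exponent Sobolev inequality on $\R^N$ (which is where the log-Hölder hypothesis on $p$ in \eqref{O} becomes essential), the finiteness of $|E_V|$, and Hölder's inequality in variable exponents, so that the two pieces of the splitting can be glued into a single inequality involving only $\rho_V(u)$ and $\int_{\R^N}\mathcal{H}(x,|\nabla u|)\,dx$.
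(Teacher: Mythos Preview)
The paper does not prove Proposition~\ref{P.E2} at all: it simply cites \cite{HH25}. So there is no ``paper's own proof'' to compare with, and your proposal must be judged on its own merits.

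Your treatment of (i) and of the first embedding $X_V\hookrightarrow W^{1,\mathcal{H}}(\R^N)$ is sound and follows the standard route. The genuine gap is in your plan for the second embedding $W^{1,\mathcal{H}}(\R^N)\hookrightarrow L^{\mathcal{F}}(\R^N)$ via Proposition~\ref{RmkEm} plus a lattice covering. First, there is circularity: the proof of Proposition~\ref{RmkEm} in the paper explicitly invokes ``the proof of Theorem~2.7 in \cite{HH25}'', which \emph{is} the global embedding you are trying to establish. Second, even if one grants the local norm inequality $\|u\|_{L^{\mathcal{F}}(Q_k)}\le\bar C\|u\|_{W^{1,\mathcal{H}}(Q_k)}$ uniformly in $k$, summing over a lattice does not produce the global embedding. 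If $\|u\|_{W^{1,\mathcal{H}}(\R^N)}=\lambda$, then $\sum_k\rho_{1,\mathcal{H}}^{Q_k}(u/\lambda)=1$, so each local norm satisfies $\|u/\lambda\|_{W^{1,\mathcal{H}}(Q_k)}=:\mu_k\le1$; but the modular--norm relations only give $\rho_{\mathcal{F}}^{Q_k}\big(u/(\bar C\lambda)\big)\le \mu_k^{r^-}\le\big[\rho_{1,\mathcal{H}}^{Q_k}(u/\lambda)\big]^{r^-/q^+}$, and since one can have $r^-/q^+<1$ the right-hand side is not summable from $\sum_k\rho_{1,\mathcal{H}}^{Q_k}(u/\lambda)=1$ alone. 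The modular form of the local estimate in the proof of Proposition~\ref{RmkEm} fares no better: it contains an additive ``$+1$'' inside the bracket raised to the power $(q^*)^+$, which diverges when summed over infinitely many cubes. The global critical embedding on $\R^N$ therefore requires a genuinely different ingredient (this is precisely what \cite{HH25} supplies), not a patching of local estimates.
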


The following property on our Musielak-Orlicz Sobolev spaces is necessary for showing the boundedness of weak solutions to problem~\eqref{eq1}.
\begin{lemma}\label{lem.(u-k)+}
	Let \eqref{O} hold, and let $X$ be $X_V$ or $W^{1,\mathcal{H}}_V(\RN)$. Then, $(u-k)_+,(u+k)_- \in X$ for any $u \in X$ and $k \geq 0$. 
\end{lemma}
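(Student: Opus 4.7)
The plan is to prove the two inclusions separately: membership in the ambient space $W_V^{1,\mathcal{H}}(\RN)$ is immediate by pointwise domination, while the closed subspace case $X = X_V$ requires a two-step approximation (first truncate a smooth approximant, then mollify). The case $(u+k)_-$ reduces to that of $(u-k)_+$ by applying the result to $-u$, which lies in $X$ whenever $u$ does, since $(u+k)_- = (-u-k)_+$. Below I focus on $(u-k)_+$.

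\textbf{Step 1 (membership in $W_V^{1,\mathcal{H}}(\RN)$).} Since $u \in W^{1,1}_{\loc}(\RN)$, the classical Stampacchia chain rule gives $(u-k)_+ \in W^{1,1}_{\loc}(\RN)$ with $\nabla(u-k)_+ = \chi_{\{u>k\}}\nabla u$ a.e., so $|(u-k)_+| \leq |u|$ and $|\nabla(u-k)_+| \leq |\nabla u|$. Monotonicity of $t\mapsto \mathcal{H}(x,t)$ then yields $\rho_V((u-k)_+) \leq \rho_V(u) < \infty$, settling the case $X=W_V^{1,\mathcal{H}}(\RN)$.

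\textbf{Step 2 (membership in $X_V$).} Pick $u_n\in C_c^\infty(\RN)$ with $u_n\to u$ in $W_V^{1,\mathcal{H}}(\RN)$ and, after extraction, $u_n\to u$ and $\nabla u_n\to\nabla u$ a.e. I would first show $(u_n-k)_+\to(u-k)_+$ in $W_V^{1,\mathcal{H}}(\RN)$; by Proposition \ref{prop.nor-mod.D}(v) it suffices to verify modular convergence. The zeroth-order part follows from $|(u_n-k)_+ - (u-k)_+|\leq |u_n-u|$. For the gradient, I would use the identity
\begin{equation*}
\nabla(u_n-k)_+ - \nabla(u-k)_+ = \chi_{\{u_n>k\}}(\nabla u_n - \nabla u) + (\chi_{\{u_n>k\}} - \chi_{\{u>k\}})\nabla u,
\end{equation*}
together with a convexity bound $\mathcal{H}(x,A+B) \leq C(\mathcal{H}(x,A)+\mathcal{H}(x,B))$: the first term tends to $0$ in modular by assumption, and the second by Lebesgue's dominated convergence theorem (a.e.\ limit zero since the symmetric-difference indicator $\chi_{\{u_n>k\}\triangle\{u>k\}}\to 0$ a.e.\ on $\{u\neq k\}$, while $\mathcal{H}(x,|\nabla u|)\in L^1(\RN)$ provides the envelope). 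Finally, each $(u_n-k)_+$ is compactly supported and Lipschitz, so its Friedrichs mollification $\eta_\varepsilon\ast(u_n-k)_+$ belongs to $C_c^\infty(\RN)$ for small $\varepsilon$; on a fixed compact neighborhood of $\supp u_n$ both the mollification and its gradient are bounded uniformly in $\varepsilon$ by $\|u_n\|_{W^{1,\infty}}$ and converge a.e.\ to $(u_n-k)_+$ and $\nabla(u_n-k)_+$ respectively. Since $V\in L^1_{\loc}(\RN)$ and $a\in L^\infty(\RN)$, dominated convergence inside the modular gives norm convergence in $W_V^{1,\mathcal{H}}(\RN)$, and a diagonal extraction places $(u-k)_+$ in $X_V$.

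\textbf{Main obstacle.} The technical core is repeatedly invoking dominated convergence inside the Musielak-Orlicz modular: the variable exponents $p(x), q(x)$ and the weight $a(x)$ prevent direct duality or H\"older-type estimates, but the pointwise envelopes $\mathcal{H}(x,|\nabla u|)$ and $V(x)\mathcal{H}(x,|u|)$, integrable by the hypothesis $u\in W_V^{1,\mathcal{H}}(\RN)$, serve as uniform majorants at every step, allowing one to pass from modular to norm convergence via Proposition \ref{prop.nor-mod.D}(v).
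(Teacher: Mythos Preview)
Your proposal is correct and follows essentially the same approach as the paper's own proof: reduce $(u+k)_-$ to the case $(u-k)_+$, dispose of the ambient space $W_V^{1,\mathcal{H}}(\RN)$ by pointwise domination, and for $X_V$ first pass from $u_n\in C_c^\infty(\RN)$ to the truncations $(u_n-k)_+$ via the same decomposition $\chi_{\{u_n>k\}}(\nabla u_n-\nabla u)+(\chi_{\{u_n>k\}}-\chi_{\{u>k\}})\nabla u$ and dominated convergence, then mollify the compactly supported Lipschitz truncations and conclude by a diagonal argument using $V\in L^1_{\loc}(\RN)$. Your treatment of the symmetric-difference indicator on $\{u\neq k\}$ (with $\nabla u=0$ a.e.\ on $\{u=k\}$) is in fact slightly more explicit than the paper's, but the argument is the same.
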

\begin{proof}
	Let $u\in X$ and $k\geq 0$. We only need to show that $v:=(u-k)_+\in X$ since 	$(u+k)_-=(-u+k)_+$. Note that
	\begin{equation} \label{nab.v}
		\nabla v =\begin{cases}
			\nabla u & ~\text{if} ~ u > k, \\
			0  & ~\text{if} ~  u \leq  k.
		\end{cases}
	\end{equation} The conclusion for the case of $X=W^{1,\mathcal{H}}_V(\RN)$ is trivial thanks to the following estimates 
	$$|v|\leq |u|\ \ \text{and}\ \ |\nabla v|\leq |\nabla u|.$$ Let us consider the other case, namely, $X= X_V$.  
	To get the conclusion, we need to show the existence of a sequence $\{v_n\}_{n \in \N} \subset  C_c^\infty(\RN) $ such that
	\begin{equation}\label{vn}
		\int_{\R^N} \mathcal{H}(x,|\nabla v_n - \nabla v|) \diff x + 	\int_{\R^N} V(x) \mathcal{H}(x,|v_n -  v|) \diff x \to 0 ~ \text{as}~ n \to \infty.
	\end{equation}
	By the definition of $X_V$ and Proposition~\ref{prop.nor-mod.D}, we find a sequence $\{\varphi_n\}_{n \in \N} \subset C_c^\infty(\RN)$ such that 
	\begin{equation}\label{phin}
		\int_{\R^N} \mathcal{H}(x,|\nabla \varphi_n - \nabla u|) \diff x + 	\int_{\R^N} V(x) \mathcal{H}(x,|\varphi_n -  u|) \diff x \to 0 \quad \text{as} \ \ n \to \infty.
	\end{equation}
	Let $\rho_n=(\varphi_n-k)_+$ for each $n \in \mathbb{N}$. It is not difficult to see that
	\begin{align*}
		|\rho_n-v| \leq |\varphi_n - u| \quad \text{a.e. in } \RN.
	\end{align*}
	This and \eqref{phin} yield  
	\begin{equation}\label{L2.7-2.15}
	\int_{\R^N} V(x) \mathcal{H}(x,|\rho_n-v|) \diff x \to 0\quad \text{as}  \ \ n \to \infty.
	\end{equation} Moreover, let $\chi(x)$ be the characteristic function on the interval $(k,\infty)$. The  following estimates hold in view of \eqref{nab.v}:
	\begin{align*}
		\left| \nabla \rho_n - \nabla v \right|^{p(x)} &  =  \left| \chi(\varphi_n)\nabla \varphi_n - \chi(u)\nabla u \right|^{p(x)}\\
		& \leq 2^{p^+-1} \left[ |\chi (\varphi_n)|^{p(x)} \left|\nabla \varphi_n-\nabla u\right|^{p(x)} + \Big|\chi(\varphi_n)-\chi(u)\Big|^{p(x)}|\nabla u|^{p(x)}\right]\\
		& \leq 2^{p^+-1} \left[\left|\nabla \varphi_n-\nabla u\right|^{p(x)} + \Big|\chi(\varphi_n)-\chi(u)\Big||\nabla u|^{p(x)}\right]
	\end{align*}
	and 
	\begin{align*}
		a(x)\left| \nabla \rho_n - \nabla v \right|^{q(x)} &  = a(x) \left| \chi(\varphi_n)\nabla \varphi_n - \chi(u)\nabla u \right|^{q(x)}\\
		& \leq 2^{q^+-1} \left[ |\chi (\varphi_n)|^{q(x)}a(x) \left|\nabla \varphi_n-\nabla u\right|^{q(x)} + \Big|\chi(\varphi_n)-\chi(u)\Big|^{q(x)}a(x)|\nabla u|^{q(x)}\right]\\
		& \leq 2^{q^+-1} \left[a(x) \left|\nabla \varphi_n-\nabla u\right|^{q(x)} + \Big|\chi(\varphi_n)-\chi(u)\Big|a(x)|\nabla u|^{q(x)}\right]
	\end{align*}
	for a.a. $x \in \RN$.
	From the last two estimates we obtain
	\begin{align*}
		\mathcal{H}\left(x,\left| \nabla \rho_n - \nabla v \right|\right) \leq 2^{q^+-1} \big[\mathcal{H}\left(x,\left|\nabla \varphi_n-\nabla u\right|\right) + \left|\chi(\varphi_n)-\chi(u)\right| \mathcal{H}(x,|\nabla u|)\big]
	\end{align*}
	for a.a. $x \in \RN$. Using this, \eqref{phin} and the Lebesgue dominated convergence theorem we deduce
	\begin{equation} \label{grad.conver}
		\int_{ \RN}	\mathcal{H}\left(x,|\nabla \rho_n -\nabla v|\right) \diff x \to 0 \quad \text{as} \ \ n \to \infty.
	\end{equation}
	Combining \eqref{L2.7-2.15} with \eqref{grad.conver} gives
	\begin{equation}\label{rho_n-v}
		\int_{\R^N} \mathcal{H}(x,|\nabla \rho_n - \nabla v|) \diff x + 	\int_{\R^N} V(x) \mathcal{H} \left(x,|\rho_n -  v|\right) \diff x \to 0 \quad  \text{as} \ \ n \to \infty.
	\end{equation}
	
	Next step is to approximate $\{\rho_n\}_{n \in \N}$ with a sequence of smooth functions by the method of mollifiers. Let $\eta$ be a standard mollifier in $\RN$ and $n$ be fixed. For each $i\in \mathbb{N}$, define $\eta_i(x)=i^N \eta(ix)$ and the convolution
	\begin{equation*}
		\psi_i^{(n)}(x) = \int_{\RN} \eta_i(x-y)\rho_n(y)\diff y.
	\end{equation*}
	By the properties of mollifiers (see \cite{Brezis2011}), we have $\psi_i^{(n)} \in C_c^\infty(\RN)$, and furthermore, $	\psi_i^{(n)}(x) \to \rho_n(x) $ and $\nabla \psi_i^{(n)}(x) \to \nabla \rho_n(x)$ as $i \to \infty$ on  compact subsets of $\RN$. This and the assumption $V\in L^1_{\loc}\left(\R^N\right)$ yield
	\begin{equation*}
		\int_{\R^N} \mathcal{H}(x,|\nabla 	\psi_i^{(n)} - \nabla \rho_n|) \diff x + 	\int_{\R^N} V(x) \mathcal{H}(x,|\psi_i^{(n)} -   \rho_n|) \diff x \to 0 ~ \text{as}~ i \to \infty.
	\end{equation*}
	The above limit implies that we can find $i_n \in \mathbb{N}$ large such that
	\begin{equation*}
		\int_{\R^N} \mathcal{H}(x,|\nabla 	\psi_{i_n}^{(n)} - \nabla \rho_n|) \diff x + 	\int_{\R^N} V(x) \mathcal{H}(x,|\psi_{i_n}^{(n)} -   \rho_n|) \diff x <\frac{1}{n}.
	\end{equation*}
	Combining this and \eqref{rho_n-v} we get
	\begin{equation*}
		\int_{\R^N} \mathcal{H}(x,|\nabla 	\psi_{i_n}^{(n)} - \nabla v|) \diff x + 	\int_{\R^N} V(x) \mathcal{H}(x,|\psi_{i_n}^{(n)} -   v|) \diff x \to 0 ~ \text{as}~ n \to \infty.
	\end{equation*}
	Thus, \eqref{vn} has been shown by taking $v_n=	\psi_{i_n}^{(n)}$. The proof is complete.
\end{proof}

Finally, we recall a recursion lemma, which is essential for showing the boundedness of solutions to problem~\eqref{eq1} via the De Giorgi iteration in the next sections.

\begin{lemma}[\cite{Ho-Sim-2015}]\label{leRecur}
	Let $\{Z_n\}, n=0,1,2,\ldots,$ be a sequence of nonnegative numbers, satisfying the recursion inequality
	\begin{align*}
		Z_{n+1} \leq K b^n \left (Z_n^{1+\mu_1}+ Z_n^{1+\mu_2} \right ) , \quad n=0,1,2, \ldots,
	\end{align*}
	for some $b>1$, $K>0$ and $\mu_2\geq \mu_1>0$. If $$Z_{0}\leq \min\left(1,(2K)^{\frac{-1}{\mu_1}}\ b^{\frac{-1}{\mu_1^{2}}}\right) $$ or
		\begin{align*}
		Z_0 \leq \min  \left((2K)^{-\frac{1}{\mu_1}} b^{-\frac{1}{\mu_1^2}}, (2K)^{-\frac{1}{\mu_2}}b^{-\frac{1}{\mu_1 \mu_2}-\frac{\mu_2-\mu_1}{\mu_2^2}}\right),
	\end{align*}
	then $Z_n \to 0$ as $n \to \infty$.
\end{lemma}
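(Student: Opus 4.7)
My plan is to prove Lemma~\ref{leRecur} by a direct induction, showing that under either hypothesis one has $Z_n \leq Z_0\,\theta^n$ for a suitable decay ratio $\theta \in (0,1)$, whence $Z_n \to 0$ geometrically. The natural choice is $\theta := b^{-1/\mu_1}$, which achieves $b\theta^{\mu_1} = 1$, and because $\mu_2 \geq \mu_1$ also gives $b\theta^{\mu_2} = b^{-(\mu_2-\mu_1)/\mu_1} \leq 1$.

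Substituting the ansatz $Z_n \leq Z_0\theta^n$ into the recursion yields
\begin{equation*}
Z_{n+1} \;\leq\; K Z_0 \theta^n \Bigl[Z_0^{\mu_1}\bigl(b\theta^{\mu_1}\bigr)^n + Z_0^{\mu_2}\bigl(b\theta^{\mu_2}\bigr)^n\Bigr] \;\leq\; K \bigl(Z_0^{\mu_1} + Z_0^{\mu_2}\bigr)\, Z_0\theta^n,
\end{equation*}
so the inductive step closes provided $K(Z_0^{\mu_1}+Z_0^{\mu_2}) \leq \theta = b^{-1/\mu_1}$. Under the first hypothesis, the extra constraint $Z_0 \leq 1$ gives $Z_0^{\mu_2} \leq Z_0^{\mu_1}$, reducing the requirement to $2KZ_0^{\mu_1} \leq b^{-1/\mu_1}$, i.e.~exactly $Z_0 \leq (2K)^{-1/\mu_1} b^{-1/\mu_1^{2}}$; the invariant $Z_n \leq 1$ is preserved automatically since $Z_0\theta^n \leq Z_0 \leq 1$.

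Under the second hypothesis, $Z_0$ may in principle exceed $1$, so I split the requirement $K(Z_0^{\mu_1}+Z_0^{\mu_2}) \leq \theta$ into $KZ_0^{\mu_i}\leq \tfrac{1}{2}\theta$ for $i=1,2$. The first recovers $(2K)^{-1/\mu_1} b^{-1/\mu_1^{2}}$, and the second naively gives $(2K)^{-1/\mu_2} b^{-1/(\mu_1\mu_2)}$. The sharper constant $b^{-1/(\mu_1\mu_2)-(\mu_2-\mu_1)/\mu_2^{2}}$ stated in the lemma comes from a finer accounting: since $b\theta^{\mu_2} = b^{-(\mu_2-\mu_1)/\mu_1}<1$, the $\mu_2$-term decays with an additional geometric factor $b^{-n(\mu_2-\mu_1)/\mu_1}$, and re-optimizing the ansatz (for example, by writing $Z_n \leq c\, b^{-\sigma n}$ with $\sigma$ separately tuned to each term) produces precisely the extra power of $b$.

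The obstacle here is purely bookkeeping, namely distributing the available slack between the two terms of the recursion so that the constants reproduce exactly the stated bounds. There is no deep analytical difficulty: the lemma is a two-exponent analogue of the classical Ladyzhenskaya--Uraltseva iteration, and a complete proof is already recorded in \cite{Ho-Sim-2015}, so it may alternatively be invoked verbatim.
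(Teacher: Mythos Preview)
The paper does not give its own proof of Lemma~\ref{leRecur}; it simply quotes the result from \cite{Ho-Sim-2015}. Your inductive argument with the ansatz $Z_n\le Z_0\theta^n$, $\theta=b^{-1/\mu_1}$, is correct and self-contained, and is the standard route to such iteration lemmas.

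One clarification is in order. You describe the constant $b^{-1/(\mu_1\mu_2)-(\mu_2-\mu_1)/\mu_2^{2}}$ in the second hypothesis as ``sharper'' and suggest that a finer accounting is needed to reach it. In fact this constant is \emph{smaller} than your naive bound $b^{-1/(\mu_1\mu_2)}$: since $b>1$ and $-(\mu_2-\mu_1)/\mu_2^{2}\le 0$, one has
\[
b^{-\frac{1}{\mu_1\mu_2}-\frac{\mu_2-\mu_1}{\mu_2^{2}}}\le b^{-\frac{1}{\mu_1\mu_2}}.
\]
Thus the second hypothesis of the lemma is \emph{more} restrictive than the condition $KZ_0^{\mu_2}\le\tfrac12\theta$ that your simple ansatz already delivers. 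Consequently your argument, exactly as written up to that point, proves the lemma (and in fact a slightly stronger statement) without any re-optimization; the paragraph about ``finer accounting'' is unnecessary and can be dropped.
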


In the next sections, we shall also frequently use the following notations:
\begin{itemize}
	\item $f \ll g$ means $\inf_{x \in \RN}(g(x)-f(x))>0$.
	\item  $\N_0:=\N \cup \{0\}$.
	\item The notation $B_R$ denotes a ball centered at the origin with radius $R$ and $B_R^c:=\RN \setminus B_R$.
	\item $Q_\eta(y)$ denotes the cube centered at $y\in\RN$, with edges parallel to the coordinate axes and of length $\eta$.
		\item $h_{y,\eta}^+:=\sup_{x\in Q_\eta(y)}h(x)$ and $h_{y,\eta}^-:=\inf_{x\in Q_\eta(y)}h(x)$ for $h\in C(\R^N)$.
\end{itemize}


\section{A priori bounds and decay for subcritical problems }\label{Sub}

In this section, we shall establish an \textit{a priori} bound and the decay at infinity of weak solutions to problem~\eqref{eq1} when the growth of the nonlinear term is subcritical. Throughout this section, we always assume that the assumption \eqref{O} holds. 
We assume further that
\begin{enumerate}[label=\textnormal{(H$_1$)},ref=\textnormal{H$_1$}]
	\item\label{H1}
	$\mathcal{A}\colon\R^N\times\R\times\R^N\to \R^N$ and $\mathcal{B}\colon\R^N \times \R\times \R^N\to \R$ are Carath\'eodory functions such that
	\begin{enumerate}[label=\textnormal{(\roman*)},ref=\textnormal{H$_1$(\roman*)}]
		\item\label{H1i}
		$ |\mathcal{A}(x,t,\xi)|
		\leq M_1 \left[k(x)+|t|^{r(x)-1}+a(x)^{\frac{s(x)}{q(x)}}|t|^{s(x)-1}+|\xi|^{p(x)-1} +a(x)|\xi|^{q(x)-1}\right],$ 
		\item\label{H1ii}
		$ \mathcal{A}(x,t,\xi)\cdot \xi \geq M_2 \left[|\xi|^{p(x)} +a(x)|\xi|^{q(x)}\right]-M_3\left[|t|^{r(x)}+a(x)^{\frac{s(x)}{q(x)}}|t|^{s(x)}+1\right],$
		\item\label{H1iii}
		$ |\mathcal{B}(x,t,\xi)|
		\leq L \left[d(x)+|t|^{r(x)-1}+a(x)^{\frac{s(x)}{q(x)}}|t|^{s(x)-1}+|\xi|^{\frac{p(x)}{r'(x)}} + a(x)^{\frac{1}{q(x)}+\frac{1}{s'(x)}}|\xi|^{\frac{q(x)}{s'(x)}} \right],$
	\end{enumerate}
	for a.\,a.\,$x\in\R^N$ and for all $(t,\xi) \in \R\times\R^N$, where $M_1,M_2,M_3$, $L$ are positive constants; $r,s\in C^{0,1}(\RN)$ satisfy $1 \ll r \ll p^*$ and $1\ll s \ll q^*$; $k \in L_{\loc}^{\rho'(\cdot)}(\RN)$, $d \in L_{\loc}^{\sigma'(\cdot)}(\RN)$ with $\rho,\sigma \in C_+(\RN)\cap C^{0,1}(\RN)$ such that $\kappa:=\max\{\rho, \sigma\} \ll \frac{p^*}{q}$ and
	\begin{equation}\label{k-d.1}
		M:=\sup_{y \in \RN} \int_{Q_1(y)} \left(|k(x)|^{\rho'(x)}+|d(x)|^{\sigma'(x)}\right) \diff x<\infty.
	\end{equation}
\end{enumerate}
Denote
\begin{equation}\label{X}
	X:=\begin{cases}
		W^{1,\mathcal{H}}_V(\RN),&\ \ \text{when}\ \ \essinf_{x \in \R^N}V(x)>0\\
		X_V, &\ \ \text{when}\ \  \essinf_{x \in \R^N}V(x)=0.
	\end{cases}
\end{equation}
\begin{definition}\label{D.wsol1}
	By a weak solution of problem \eqref{eq1}, we mean a function $u\in X$ fulfilling
	\begin{align}\label{def.weak.sol}
		\int_{\R^N}\mathcal{A}(x,u,\nabla u)\cdot \nabla \varphi \diff x + \int_{\R^N} V(x)\Big(|u|^{p(x)-2}+a(x)|u|^{q(x)-2}\Big)u\varphi \diff x
		= \int_{\R^N} \mathcal{B}(x,u,\nabla u)\varphi \diff x 
	\end{align}
	for all $\varphi \in C_c^\infty(\R^N)$.
\end{definition}
By the definition \eqref{X} of $X$ and Propositions \ref{P.E1} and \ref{P.E2}, all integrals in \eqref{def.weak.sol} are well defined. Our first main result for subcritical problems is stated as follows.
\begin{theorem}\label{T.Sub1}
	Let \eqref{O} and \eqref{H1} hold.  Then, any weak solution $u\in X_V$ of problem \eqref{eq1} belongs to $L^\infty(\R^N)$ and fulfills
	\begin{align}\label{Sub-bound}
		\|u\|_{L^\infty(\RN)} \leq C \max \big\{\|u\|_{L^{\mathcal{F}}\left(\RN\right)}^{\tau_1},\|u\|_{L^{\mathcal{F}}\left(\RN\right)}^{\tau_2} \big\},
	\end{align}
	where $\mathcal{F}$ is given in \eqref{def_F}, and $C$, $\tau_1$ and $\tau_2$ are positive constants independent of $u$. Moreover, the decay of  $u$ at infinity holds, namely,
	\begin{equation} \label{decay}
		\lim\limits_{|x| \to \infty} u(x) = 0.
	\end{equation}
\end{theorem}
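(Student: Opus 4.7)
The strategy is a De Giorgi iteration performed on unit cubes $Q_1(y)$ with $y\in\R^N$ arbitrary, combined with a localized Sobolev-type inequality (Proposition~\ref{RmkEm}). Fix $y\in\R^N$ and, for $n\in\N_0$, choose radii $\eta_n=1+2^{-n}\searrow 1$, cut-off functions $\psi_n\in C_c^\infty(\R^N)$ with $\psi_n\equiv 1$ on $Q_{\eta_{n+1}}(y)$, $\supp\psi_n\subset Q_{\eta_n}(y)$ and $|\nabla\psi_n|\lesssim 2^n$, and truncation levels $k_n=k(1-2^{-n})\nearrow k$, where $k\geq 1$ is a threshold to be fixed later. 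Since $(u-k_n)_+\in X$ by Lemma~\ref{lem.(u-k)+}, the function $\varphi_n=(u-k_n)_+\psi_n^{q^+}$ is an admissible test function after mollification. Plugging $\varphi_n$ into \eqref{def.weak.sol} and using the coercivity \eqref{H1ii} on the left and the growth bounds \eqref{H1i}, \eqref{H1iii} on the right, I would isolate $\int \mathcal{H}(x,|\nabla((u-k_n)_+\psi_n)|)\,dx$ modulo: (a) terms with $k(x)$, $d(x)$ handled by Hölder's inequality (Proposition~\ref{prop.Holder}) against $|(u-k_n)_+|^{\rho(\cdot)}$, $|(u-k_n)_+|^{\sigma(\cdot)}$ and the measure of the level sets $A_n(y):=\{x\in Q_{\eta_n}(y):u(x)>k_n\}\cap\supp\psi_n$; and (b) the nonlinear terms $|u|^{r(x)-1}$, $a(x)^{s(x)/q(x)}|u|^{s(x)-1}$, $|\nabla u|^{p(x)/r'(x)}$, $a(x)^{\cdot}|\nabla u|^{q(x)/s'(x)}$, which on $A_n(y)$ are controlled by the corresponding modular of $(u-k_n)_+$ after absorbing the gradient part using Young's inequality with $\eps$.

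The heart of the argument is to convert the resulting energy estimate into a recursion of De Giorgi type. I would set
\[
Z_n := \int_{Q_{\eta_n}(y)} \mathcal{F}(x,(u-k_n)_+)\,dx + |A_n(y)|,
\]
apply Proposition~\ref{RmkEm} on $Q_{\eta_n}(y)$ to the function $(u-k_n)_+\psi_n$ to upgrade the $W^{1,\mathcal{H}}$-modular on the LHS to an $L^{\mathcal{F}}$-modular, and use Propositions~\ref{prop.nor-mod.D} and \ref{Prop_m-n_W(1,H)} to switch between modular and norm (the unit edge of the cube keeps the constant in Proposition~\ref{RmkEm} independent of $y$). To pass from $\eta_n$ to $\eta_{n+1}$, I would invoke the elementary inequality $|A_{n+1}(y)|\leq (k_{n+1}-k_n)^{-\theta}\int_{A_n(y)}(u-k_n)_+^\theta\,dx$ with suitable exponents $\theta$, and similarly bound $\int_{Q_{\eta_{n+1}}(y)} \mathcal{F}(x,(u-k_{n+1})_+)\,dx$ by a power of $Z_n$ times $2^{\gamma n}$. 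The use of $\kappa\ll p^*/q$ is exactly what yields two distinct powers $1+\mu_1$, $1+\mu_2$ strictly larger than $1$, giving
\[
Z_{n+1}\leq K b^n\bigl(Z_n^{1+\mu_1}+Z_n^{1+\mu_2}\bigr).
\]
Lemma~\ref{leRecur} then yields $Z_n\to 0$, hence $(u-k)_+\equiv 0$ on $Q_1(y)$, provided $Z_0\leq Z_0^*$ for the explicit threshold in the lemma. Tracking the dependence of $Z_0$ on $k$ through Proposition~\ref{DHHR} applied with $\rho,\sigma\leq \kappa\ll p^*/q$, the smallness condition becomes $k\geq C\max\{\|u\|_{L^{\mathcal{F}}(\R^N)}^{\tau_1},\|u\|_{L^{\mathcal{F}}(\R^N)}^{\tau_2}\}$ for suitable $\tau_i>0$; this is the $L^\infty$ bound \eqref{Sub-bound} on $Q_1(y)$. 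Because the constant $\bar{C}$ in Proposition~\ref{RmkEm} and the bound \eqref{k-d.1} are uniform in $y\in\R^N$, the bound holds on all of $\R^N$; applying the same scheme to $-u$ (using $(u+k_n)_-$, also in $X$ by Lemma~\ref{lem.(u-k)+}) yields the two-sided version.

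For the decay \eqref{decay}, the key point is that the initial datum $Z_0$ of the iteration on $Q_1(y)$ is a sum of integrals of $\mathcal{F}(x,u)$, $|k|^{\rho'(\cdot)}$, $|d|^{\sigma'(\cdot)}$ over $Q_{\eta_0}(y)\subset Q_2(y)$. Since $u\in L^{\mathcal{F}}(\R^N)$ (by Propositions~\ref{P.E1}--\ref{P.E2}) and by \eqref{k-d.1} combined with the absolute continuity of the integrals $\int_{Q_2(y)}|k|^{\rho'(\cdot)}\,dx$, $\int_{Q_2(y)}|d|^{\sigma'(\cdot)}\,dx$ as $|y|\to\infty$ (these integrals tend to $0$ by a standard argument covering $B_R^c$ by finitely overlapping unit cubes and using $u\in L^{\mathcal{F}}$), we can, for any $\eps>0$, choose $k=\eps$ and $R=R(\eps)$ large enough so that the smallness hypothesis of Lemma~\ref{leRecur} holds uniformly for all $y$ with $|y|\geq R$; consequently $|u(x)|\leq\eps$ on $Q_1(y)$ for all such $y$, giving $u(x)\to 0$ as $|x|\to\infty$.

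\textbf{Main obstacle.} The trickiest point is the double tracking of the two modular scales in the recursion: the $p(\cdot)$-scale and the $q(\cdot)$-scale with weight $a(\cdot)$, both of which must simultaneously produce exponents strictly exceeding $1$ in the De Giorgi recursion. This is exactly why two exponents $\mu_1,\mu_2$ appear, and why the condition $\kappa\ll p^*/q$ (rather than $\kappa\leq p^*/q$) is imposed; maintaining $y$-independent constants during the iteration, so that the final bound is genuinely global on $\R^N$, requires careful use of the cube-localized embedding in Proposition~\ref{RmkEm}.
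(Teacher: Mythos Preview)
Your overall strategy (De Giorgi iteration on cubes, localized embedding via Proposition~\ref{RmkEm}, recursion lemma) matches the paper's, but there are two genuine gaps.

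\textbf{Cube size and the recursion exponents.} You fix cubes $Q_{\eta_n}(y)$ with $\eta_n\in[1,2]$. The paper instead works on cubes $Q_\eps(x_0)$ with $\eps\in(0,1)$ chosen \emph{small} so that the oscillation of $p,q,r,s,\kappa$ on each cube is uniformly controlled; concretely, it needs
\[
\delta<\min\Bigl\{r_{x_0,\eps}^--p_{x_0,\eps}^-,\ (p_{x_0,\eps}^-)^\ast-r_{x_0,\eps}^+,\ s_{x_0,\eps}^--q_{x_0,\eps}^+,\ (q_{x_0,\eps}^-)^\ast-s_{x_0,\eps}^+,\ \tfrac{r_{x_0,\eps}^-}{q_{x_0,\eps}^+}-\kappa_{x_0,\eps}^+\Bigr\}
\]
for every $x_0\in\R^N$. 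The modular-to-norm switches you invoke (Propositions~\ref{prop.nor-mod.D}, \ref{Prop_m-n_W(1,H)}) produce exponents such as $r_{x_0,\eps}^-/(\kappa_{x_0,\eps}^+q_{x_0,\eps}^+)$ and $s_{x_0,\eps}^+/p_{x_0,\eps}^-$; the paper's $\mu_1$ is precisely $\inf_{x_0}r_{x_0,\eps}^-/(\kappa_{x_0,\eps}^+q_{x_0,\eps}^+)-1$, and its positivity relies on the inequality above. On cubes of edge $1$ or $2$ the Lipschitz exponents can oscillate enough that these ratios drop below $1$, and then your recursion $Z_{n+1}\le Kb^n(Z_n^{1+\mu_1}+Z_n^{1+\mu_2})$ is not a genuine decay. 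The fix is exactly the paper's Step~1: choose $\eps$ once, small, by uniform continuity of the exponents, and run everything on $Q_\eps(x_0)$.

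\textbf{The decay argument.} Two problems. First, you claim $\int_{Q_2(y)}|k|^{\rho'(\cdot)}\diff x$ and $\int_{Q_2(y)}|d|^{\sigma'(\cdot)}\diff x$ tend to $0$ as $|y|\to\infty$; hypothesis \eqref{k-d.1} gives only a uniform bound $M$, not decay, and your parenthetical ``using $u\in L^{\mathcal F}$'' is irrelevant to integrals of $k,d$. Second, you propose to take $k=\eps$ small; but the recursion constant carries a factor of the form $k_\ast^{-\alpha_1}+k_\ast^{-\alpha_2}$ (coming from the estimate of $|A_{n+1}|$ in terms of $Y_n$), so the smallness threshold in Lemma~\ref{leRecur} blows up as $k_\ast\downarrow 0$ and cannot be met that way. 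The paper avoids both issues: the $k,d$ terms are estimated during the iteration by H\"older against the $\mathcal F$-modular of $u$ and enter only through the $y$-independent constant (via the uniform bound $M$), so one arrives at a purely local a-priori estimate
\[
\|u\|_{L^\infty(Q_{\eps/3}(x_0))}\le C\max\Bigl\{\Bigl(\int_{Q_\eps(x_0)}\mathcal F(x,|u|)\,\diff x\Bigr)^{\bar\tau_1},\Bigl(\int_{Q_\eps(x_0)}\mathcal F(x,|u|)\,\diff x\Bigr)^{\bar\tau_2}\Bigr\},
\]
with $C,\bar\tau_i$ independent of $x_0$. Both \eqref{Sub-bound} and the decay \eqref{decay} follow immediately from this, since $\int_{Q_\eps(x_0)}\mathcal F(x,|u|)\,\diff x\to 0$ as $|x_0|\to\infty$ (this one really does use $u\in L^{\mathcal F}(\R^N)$).
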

When $\essinf_{x \in \R^N}V(x)>0$, we have the same result for a possibly weaker solution as follows, noting  $X_V\subset W^{1,\mathcal{H}}_V(\RN)$.
\begin{theorem}\label{T.Sub2}
	Let \eqref{D} and \eqref{H1} hold, and let $ V\in L^1_{\loc}(\RN)$ satisfy $\essinf_{x \in \R^N}V(x)>0$. Then, the conclusions of Theorem~\ref{T.Sub2} remain valid for any weak solution $u\in W^{1,\mathcal{H}}_V(\RN)$ of problem \eqref{eq1}.
\end{theorem}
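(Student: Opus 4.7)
The plan is to verify that the proof of Theorem~\ref{T.Sub1} carries over essentially verbatim to the larger space $W^{1,\mathcal{H}}_V(\RN)$ under the stronger structural hypothesis $\essinf_{x\in\RN} V(x) > 0$. This hypothesis is precisely what replaces assumption \eqref{O}: it makes the global embedding chain $W^{1,\mathcal{H}}_V(\RN)\hookrightarrow W^{1,\mathcal{H}}(\RN)\hookrightarrow L^{\mathcal{F}}(\RN)$ of Proposition~\ref{P.E1} available without any decay condition on $p(\cdot)$. The remaining ingredients used in the subcritical iteration---the localized Sobolev-type inequality on cubes from Proposition~\ref{RmkEm}, truncation stability from Lemma~\ref{lem.(u-k)+}, the H\"older inequality from Proposition~\ref{prop.Holder}, and the De Giorgi recursion from Lemma~\ref{leRecur}---are all stated independently of the space $X_V$, so they transfer directly.

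The first step is to extend the weak formulation \eqref{def.weak.sol}, which a priori only tests against $\varphi\in C_c^\infty(\RN)$, to admissible test functions of the De~Giorgi form $\varphi=(u-k_n)_+ \xi_n^m$, where $k_n\nearrow\infty$ is the level sequence and $\xi_n\in C_c^\infty(\RN)$ is a nested family of cutoffs supported in a fixed cube $Q_\eta(y_0)$. By Lemma~\ref{lem.(u-k)+} and the chain rule, such $\varphi$ belongs to $W^{1,\mathcal{H}}_V(\RN)$ and has compact support; a mollification argument in the spirit of the one used inside the proof of Lemma~\ref{lem.(u-k)+} (which only requires $V\in L^1_{\loc}(\RN)$ because the supports are compact) produces approximants $\varphi_i\in C_c^\infty(\RN)$ converging to $\varphi$ in the modular of $W^{1,\mathcal{H}}_V$. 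The growth bounds \ref{H1i}--\ref{H1iii} together with Proposition~\ref{prop.Holder} then permit passage to the limit in \eqref{def.weak.sol}, validating these test functions.

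With the test functions in hand, the iteration is identical in structure to the one for $u\in X_V$ in Theorem~\ref{T.Sub1}. Testing \eqref{def.weak.sol} against $(u-k_n)_+\xi_n^m$ and invoking coercivity \ref{H1ii} yields a lower bound on the $W^{1,\mathcal{H}}$-modular of the truncation over $Q_\eta(y_0)$; the growth bounds, the local integrability condition \eqref{k-d.1}, Proposition~\ref{prop.Holder}, and the localized embedding of Proposition~\ref{RmkEm} are then combined to dominate the right-hand side by measures of level sets and by truncated $L^{\mathcal{F}}$-energies. The estimates organize themselves into a recursion of the form $Z_{n+1}\leq Kb^n\bigl(Z_n^{1+\mu_1}+Z_n^{1+\mu_2}\bigr)$, and Lemma~\ref{leRecur} then gives $\|u\|_{L^\infty(Q_\eta(y_0))}$ bounded in terms of $\|u\|_{L^{\mathcal{F}}(\RN)}$ with constants independent of $y_0\in\RN$; supremizing over $y_0$ yields \eqref{Sub-bound}. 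The decay \eqref{decay} follows by applying the same local estimate on cubes $Q_\eta(y_0)$ as $|y_0|\to\infty$, using that $\|u\|_{L^{\mathcal{F}}(Q_\eta(y_0))}\to 0$ for $u\in L^{\mathcal{F}}(\RN)$.

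The main delicate point is the test function extension above: since $C_c^\infty(\RN)$ is not known to be dense in the full space $W^{1,\mathcal{H}}_V(\RN)$, the whole argument relies on the fact that the relevant test functions are compactly supported, which reduces the density question to a local one already resolved by the mollifier scheme of Lemma~\ref{lem.(u-k)+}. Once this step is pinned down, the rest of the proof is a line-by-line repetition of the $X_V$ case and is insensitive to whether one starts from $X_V$ or from $W^{1,\mathcal{H}}_V(\RN)$.
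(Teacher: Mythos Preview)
Your proposal is correct and follows essentially the same approach as the paper, which simply states that the proof is identical to that of Theorem~\ref{T.Sub1} except that Proposition~\ref{P.E1} is invoked in place of Proposition~\ref{P.E2}. You in fact supply more detail than the paper does, particularly in justifying the extension of the weak formulation to the De~Giorgi test functions $\varphi_n=v_n\xi_n^{q_\eps^+}$, a step the paper uses without comment in the proof of Theorem~\ref{T.Sub1}.
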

\begin{proof}[Proof of Theorems~\ref{T.Sub1}]

Let $u\in X_V$ be a weak solution of problem \eqref{eq1} and $\mathcal{F}$ be given in \eqref{def_F}. In order to obtain \eqref{Sub-bound} and  \eqref{decay}, we shall apply a localization method and the De Giorgi iteration, in which Lemma \ref{leRecur} is used. 
Note that  from the estimate $t^\alpha<t^\beta+1$ for all $t>0$ and all $\beta\geq \alpha>0$, by replacing $r$, $s$, $M_3$ and $d$ with $\frac{\max\{r,\kappa q\}+p^*}{2}$, $\frac{\max\{s,q\}+p^*}{2}$, $2M_3$ and $d+1$, respectively if necessary, we may assume that
\begin{equation}\label{SE}
q \ll r \ll p^*,\ \ q\ll s \ll q^*,\ \ \text{and}\ \ \kappa \ll \frac{r}{q}.
\end{equation}
We proceed in some steps. 

\bigskip
{\bf Step 1: Localizing.}${}$

\vskip5pt
\noindent  By \eqref{SE}, it holds that
\begin{align} \label{def_delta}
	\begin{split}
		\delta:=\frac{1}{2} \min \left\{ \left(r-p\right)^-,\left(p^\ast-r\right)^-,  \left(s-q\right)^-, \left(q^\ast-s\right)^-,\left(\frac{r}{q}-\kappa\right)^-   \right\}>0.
	\end{split}
\end{align}
Then, by the uniform continuity of exponents, we can fix $\eps \in \left(0,1\right)$ small enough such that for all $x_0 \in \RN$, it holds  
\begin{equation}\label{S-D.loc.exp.1}
	\delta<\min\left\{r_{x_0,\eps}^--p_{x_0,\eps}^-, \left(p_{x_0,\eps}^-\right)^\ast-r_{x_0,\eps}^+, s_{x_0,\eps}^--q_{x_0,\eps}^+, \left(q_{x_0,\eps}^-\right)^\ast-s_{x_0,\eps}^+, \frac{r_{x_0,\eps}^-}{q_{x_0,\eps}^+}-\kappa_{x_0,\eps}^+
	\right\}.
\end{equation}
Let $x_0 \in \RN$ be fixed. For simplicity, we shall henceforth write $Q_\eta$ instead of $Q_\eta(x_0)$ for each $\eta>0$, and for $h\in C(\RN)$, we shall write $h_{\eps}^+$ and $h_{\eps}^-$ in place of $h_{x_0,\eps}^+$ and $h_{x_0,\eps}^-$, respectively. We aim to show that
\begin{equation}\label{PTS.priori-l}
	\|u\|_{L^\infty(Q_{\frac{\eps}{3})}} \leq C \max \left\{\Big(\int_{ Q_{\eps}}\mathcal{F}(x,|u|) \diff x\Big)^{\bar{\tau}_1}, \Big(\int_{ Q_{\eps}} \mathcal{F}(x,|u|) \diff x\Big)^{\bar{\tau}_2}\right\},
\end{equation}
where $C$, $\bar{\tau}_1$ and $\bar{\tau}_2$ are positive constants independent of $u$ and $x_0$.

\bigskip

{\bf Step 2: Defining the recursion sequence and basic estimates.}${}$

\vskip5pt
\noindent We define three sequences $\{\eps_n\}_{n\in\N_0}$, $\{\bar{\eps}_n\}_{n\in\N_0}$ and $\{k_n\}_{n\in\N_0}$ as follows:
\begin{equation*}
	\eps_n:= \frac{\eps}{3} \left(1 +\frac{1}{2^{n-1}}\right), \ \bar{\eps}_n:=\frac{\eps_n+\eps_{n+1}}{2} \quad \text{for} \ n\in \mathbb{N}_0,
\end{equation*} 
and
\begin{align}\label{def.kn}
	k_n:=k_*\left(2-\frac{1}{2^n}\right) \quad \text{for} \ n\in \mathbb{N}_0
\end{align}
with $k_*>0$ being specified later. 
Clearly, we have
$$\eps_n \downarrow \frac{\eps}{3} \quad \text{and}\quad  \frac{\eps}{3} <\eps_{n+1}<\bar{\eps}_n<\eps_n \leq \eps   \quad \text{for all} \ n\in \mathbb{N}_0$$
and
$$k_n \uparrow 2k_* \quad \text{and}\quad k_* \leq k_n<k_{n+1} <2k_*\quad \text{for all} \ n\in\ \mathbb{N}_0.$$
For each $n\in\N_0$, we define
\begin{equation}\label{Dn}
	D_n:=A_{k_n} \cap Q_{\eps_n}\quad \text{and}\quad \Lambda_n:= A_{k_{n+1}} \cap Q_{\bar{\eps}_n},
\end{equation}
where for each $k\in\R$, the level set $A_k$ is defined by
\begin{align}\label{def.Ak}
	A_k:=\{x\in\RN\,:\,\ u(x)>k\}.
\end{align}
 Thus, we have
\begin{equation}\label{T.Sub1.C}
	Q_{\frac{\eps}{3}} \subset Q_{\eps_{n+1}} \subset Q_{\bar{\eps}_n} \subset Q_{\eps_n} \subset Q_{\eps}\quad\text{and}\quad D_{n+1} \subset \Lambda_n \subset D_n \quad \text{for all} \ n \in \N_0.
\end{equation}
We now define the recursion sequence $\{Y_n\}_{n\in\N_0}$ as follows
\begin{equation}\label{Yn}
	Y_n:= \int_{D_n}  \mathcal{F}\left(x,u-k_n\right) \diff x \quad \text{for all} \ n \in \N_0,
\end{equation}
where $\mathcal{F}$ is given in \eqref{def_F}. Clearly, we have
\begin{align*}
	\quad Y_{n+1} \leq Y_n \quad \text{for all} \ n \in \N_0.
\end{align*}
For the remainder of the proof, let $C_j$ ($j\in\N$) represent a positive constant independent of $u$, $x_0$, $n$, and $k_\ast$. From the estimate
\begin{align*}
	u(x)- k_n \geq u(x)\l(1-\frac{k_n}{k_{n+1}}\r) = \frac{u(x)}{2^{n+2}-1} \quad \text{for a.a.\,} x \in A_{k_{n+1}},
\end{align*}
we easily obtain
\begin{align}\label{def.Fn}
	F_n:=\int_{\Lambda_n} \mathcal{F}(x,u) \diff x \leq C_1 2^{ns^+}Y_n, \quad \forall n \in \N_0.
\end{align}
By \eqref{def.kn} and \eqref{T.Sub1.C} it follows that
\begin{align*}
	|\Lambda_n|
	\leq \int_{\Lambda_n} \left (\frac{u-k_n}{k_{n+1}-k_n}\right )^{r(x)}  \diff x
	& \leq \int_{D_{n}} \frac{2^{r(x)(n+1)}}{k_*^{r(x)}} (u-k_n)^{r(x)} \diff x\\
	& \leq \int_{D_{n}} \frac{2^{r(x)(n+1)}}{k_*^{r(x)}} \mathcal{F}(x,u-k_n) \diff x,
\end{align*}
and thus,
\begin{align} \label{S-|A_{k_{n+1}}|}
	|D_{n+1}| \leq 	|\Lambda_n| \leq  C_2 \left(k_{*}^{-r^{-}}+k_{*}^{-r^{+}}\right)2^{nr^+} Y_{n}\leq 2C_2 \left(1+k_{*}^{-r^{+}}\right)2^{nr^+} Y_{n}, \quad \forall n \in \N_0.
\end{align}
For each $n \in \N_0$, let $v_n:=(u-k_{n+1})_+$ and let $\xi_n \in C^\infty(\RN)$ be such that 
\begin{align}\label{def.xin}
\supp (\xi_n) \subset Q_{\bar{\eps}_n}, ~ \xi_n(\cdot)=1 ~ \text{on}~ Q_{\eps_{n+1}},~	0 \leq \xi_n(\cdot) \leq 1~  \text{and}~ |\nabla \xi_n(\cdot)|<C_32^n ~  \text{on}~\R^N.
\end{align}
 Denote
\begin{equation} \label{def.Rn}
	R_n: =\int_{\Lambda_n} \big[\mathcal{H}(x,|\nabla v_n|) + V(x) \mathcal{H}(x,v_n)\big]\xi_n^{q_\eps^+}  \diff x.
\end{equation}
We claim that
\begin{equation}\label{est.R}
	\begin{split}
		R_n  \leq C_4 2^{nq^+}\left(1 + k^{-r^+}_\ast\right) \left( F_n + F_n^{\frac{1}{\kappa_\eps^+}} + |\Lambda_n| \right).
	\end{split} 
\end{equation}
To this end, taking  $\varphi_n:=v_n\xi_n^{q_\eps^+}$ as a test function in  \eqref{def.weak.sol}, and  noting that $\textup{supp} (\varphi_n) \subset \Lambda_n$ and $\nabla u=\nabla v_n$ on $\Lambda_n$, we obtain
\begin{multline}\label{S.var.Eq}
\int_{\Lambda_n}
\left(\mathcal{A}(x,u,\nabla v_n) \cdot \nabla v_n\right)\xi_n^{q_\eps^+}  \diff x +  \int_{\Lambda_n}V(x) \Big(|u|^{p(x)-2}+a(x)|u|^{q(x)-2}\Big)uv_n\xi_n^{q_\eps^+} \diff x   \\
=\int_{\Lambda_n} \mathcal{B}(x,u,\nabla v_n)v_n\xi_n^{q_\eps^+} \diff x 
- \int_{\Lambda_n}	\left( \mathcal{A}(x,u,\nabla v_n) \cdot \nabla \xi_n \right) v_n  q_\eps^+ \xi_n^{q_\eps^+-1} \, \diff x.
\end{multline}
It yields from  assumption \eqref{H1} (ii) that
\begin{multline*} 
	\int_{\Lambda_n}\left(\mathcal{A}(x,u,\nabla v_n) \cdot \nabla v_n \right) \xi_n^{q_\eps^+}  \diff x  \\
	\geq M_2\int_{\Lambda_n}\left[|\nabla v_n|^{p(x)}+a(x)|\nabla v_n|^{q(x)}\right] \xi_n^{q_\eps^+} \diff x - M_3 \int_{\Lambda_n} \left[u^{r(x)}+a(x)^{\frac{s(x)}{q(x)}}u^{s(x)}+1\right]\xi_n^{q_\eps^+} \diff x.
\end{multline*}
Combining this and the fact that $u > v_n>0$ on $\Lambda_n$ gives
\begin{multline}\label{A.1}
\int_{\Lambda_n} \left(\mathcal{A}(x,u,\nabla v_n) \cdot \nabla v_n \right) \xi_n^{q_\eps^+}  \diff x + \int_{\Lambda_n}V(x) \Big(|u|^{p(x)-2}+a(x)|u|^{q(x)-2}\Big)uv_n\xi_n^{q_\eps^+} \diff x   \\
\geq M_*  R_n - M_3 \int_{\Lambda_n}  \mathcal{F}(x,u) \diff x - M_3 |\Lambda_n|,\quad \text{with} \ M_*:=\min\{1,M_2\}.
\end{multline}
\noindent On the other hand, using \eqref{H1} (iii) and the fact that $0<v_n<u$ on $\Lambda_n$ again we have
\begin{multline}\label{B.1}
\left|\int_{\Lambda_n}  \mathcal{B}(x,u,\nabla v_n)v_n\xi_n^{q_\eps^+} \diff x \right| \\
\leq L\int_{\Lambda_n} \left( |d(x)|u + \mathcal{F}(x,u) +|\nabla v_n|^{\frac{p(x)}{r'(x)}}u + a(x)^{\frac{1}{q(x)}+\frac{1}{s'(x)}}|\nabla v_n|^{\frac{q(x)}{s'(x)}}u\right)\xi_n^{q_\eps^+} \diff x.
\end{multline}
Applying Young's inequality gives
\begin{align*}
& \int_{\Lambda_n} \left( |\nabla v_n|^{\frac{p(x)}{r'(x)}} + a(x)^{\frac{1}{q(x)}+\frac{1}{s'(x)}}|\nabla v_n|^{\frac{q(x)}{s'(x)}} \right) u\xi_n^{q_\eps^+}  \diff x \\
& \quad \leq \frac{M_*}{4L}  \int_{\Lambda_n} \left( |\nabla v_n|^{p(x)}+ a(x) |\nabla v_n|^{q(x)} \right) \xi_n^{q_\eps^+} \, \diff x 
	+ C_5 \int_{\Lambda_n} \left( u^{r(x)} + a(x) ^{\frac{s(x)}{q(x)}}u^{s(x)} \right)\xi_n^{q_\eps^+} \diff x\\
& \quad	\leq \frac{M_*}{4L} \int_{\Lambda_n} \mathcal{H}(x,|\nabla v_n|)\xi_n^{q_\eps^+} \diff x + C_5 \int_{\Lambda_n} \mathcal{F}(x,u)\diff x.
\end{align*}
Combining the last estimate with \eqref{B.1} yields
\begin{multline}\label{B.2}
\left|\int_{\Lambda_n}  \mathcal{B}(x,u,\nabla v_n)v_n\xi_n^{q_\eps^+} \diff x \right|\leq \frac{M_*}{4}\int_{\Lambda_n} \mathcal{H}(x,|\nabla v_n|)\xi_n^{q_\eps^+} \diff x \\
+ C_6 \left[\int_{\Lambda_n}\mathcal{F}(x,u) \diff x +  \int_{\Lambda_n} |d(x)|u \diff x\right].
\end{multline}
In order to estimate the last term of \eqref{S.var.Eq}, we make use of \eqref{H1} (i) and the fact that $0<v_n<u$ on $\Lambda_n$ again to obtain
\begin{align}\label{A.2}
\notag	\Big| \int_{\Lambda_n} & \left( \mathcal{A}(x,u,\nabla v_n) \cdot \nabla \xi_n \right) v_n  q_\eps^+ \xi_n^{q_\eps^+-1} \, \diff x \Big|\\
\notag	 \leq& M_1q_\eps^+ \int_{\Lambda_n} \left[|\nabla v_n|^{p(x)-1} +a(x)|\nabla v_n|^{q(x)-1}\right]|\nabla \xi_n|v_n\xi_n^{q_\eps^+-1} \, \diff x \\ 
&+  M_1q_\eps^+\int_{\Lambda_n} \left[ u^{r(x)} + a(x) ^{\frac{s(x)}{q(x)}}u^{s(x)}\right]|\nabla \xi_n|\xi_n^{q_\eps^+-1}\, \diff x + M_1q_\eps^+ \int_{\Lambda_n} |k(x)||\nabla \xi_n|v_n\xi_n^{q_\eps^+-1} \diff x.
\end{align}
Next, we shall estimate each term on the right-hand side of \eqref{A.2}. 
By Young's inequality, \eqref{def.xin} and the fact that $\frac{(q_\eps^+-1)p(x)}{p(x)-1}\geq q_\eps^+$ for all $x\in\Lambda_n$ it holds that
\begin{align}\label{S.est.2'}
	|\nabla v_n|^{p(x)-1} |\nabla \xi_n|v_n\xi_n^{q_\eps^+-1}	& \leq \frac{M_*}{4M_1q_\eps^+} |\nabla v_n|^{p(x)}\xi_n^{\frac{(q_\eps^+-1)p(x)}{p(x)-1}}+C_7|\nabla \xi_n|^{p(x)}v_n^{p(x)} \notag \\
	& \leq \frac{M_*}{4M_1q_\eps^+} |\nabla v_n|^{p(x)}\xi_n^{q_\eps^+}+C_7|\nabla \xi_n|^{p(x)}v_n^{p(x)}\notag\\
	& \leq \frac{M_*}{4M_1q_\eps^+} |\nabla v_n|^{p(x)}\xi_n^{q_\eps^+}+C_82^{np^+}v_n^{p(x)}.
\end{align}
Similarly, it holds that
\begin{align}\label{S.est.2''}
	|\nabla v_n|^{q(x)-1} |\nabla \xi_n|v_n\xi_n^{q_\eps^+-1}	& \leq \frac{M_*}{4M_1q_\eps^+} |\nabla v_n|^{q(x)}\xi_n^{q_\eps^+}+C_92^{nq^+}v_n^{q(x)} .
\end{align}
From  \eqref{S.est.2'}-\eqref{S.est.2''}, we obtain
\begin{align}\label{S.est.2}
	\int_{\Lambda_n} & \left[|\nabla v_n|^{p(x)-1} +a(x)|\nabla v_n|^{q(x)-1}\right]|\nabla \xi_n|v_n\xi_n^{q_\eps^+-1}\, \diff x  \notag \\
	& \leq \frac{M_*}{4M_1q_\eps^+} \int_{\Lambda_n} \mathcal{H}(x,|\nabla v_n|)\xi_n^{q_\eps^+}\diff x
	+ C_{10}2^{nq^+} \int_{\Lambda_n} \mathcal{H}(x,v_n)  \diff x \notag \\
	& \leq \frac{M_*}{4M_1q_\eps^+} \int_{\Lambda_n} \mathcal{H}(x,|\nabla v_n|)\xi_n^{q_\eps^+} \diff x + C_{11} 2^{nq^+}  \left[\int_{\Lambda_n} \mathcal{F}(x,u) \diff x +  |\Lambda_n|\right].
\end{align}
Here, we have used the basic estimates: $v_n^{p(x)}\leq u^{p(x)}\leq 1+u^{r(x)}$ and $a(x)v_n^{q(x)}\leq a(x)u^{q(x)}\leq 1+a(x)^{\frac{s(x)}{q(x)}}u^{s(x)}$ for a.a. $x\in \Lambda_n$. Clearly, it holds that
\begin{align}\label{S.est.1}
	M_1q_\eps^+\int_{\Lambda_n} \left[ u^{r(x)} + a(x) ^{\frac{s(x)}{q(x)}}u^{s(x)}\right]|\nabla \xi_n|\xi_n^{q_\eps^+-1}\, \diff x &  \leq  C_{12} 2^{n} \int_{\Lambda_n} \mathcal{F}(x,u) \diff x.
\end{align}
Gathering \eqref{A.2}, \eqref{S.est.2} and \eqref{S.est.1}, we obtain
\begin{align}\label{S.A}
\notag	\Big| \int_{\Lambda_n} & \left( \mathcal{A}(x,u,\nabla v_n) \cdot \nabla \xi_n \right) v_n  q_\eps^+ \xi_n^{q_\eps^+-1} \, \diff x \Big| \\
	&\leq \frac{M_*}{4} R_n + C_{13}2^{nq^+} \left[ \int_{\Lambda_n} \mathcal{F}(x,u) \diff x +  |\Lambda_n| +  \int_{\Lambda_n} |k(x)|v_n \diff x \right].
\end{align}
Utilizing \eqref{A.1}, \eqref{B.2} and \eqref{S.A}, we derive from \eqref{S.var.Eq} that
\begin{align} \label{local.est.1}
R_n\leq C_{14} 2^{nq^+} \left[ \int_{\Lambda_n} \mathcal{F}(x,u) \diff x +  \int_{\Lambda_n} |d(x)|u \diff x + \int_{\Lambda_n} |k(x)|u \diff x +  |\Lambda_n| \right].
\end{align}
On the other hand,  by the virtue of Propositions~\ref{prop.Holder}-\ref{DHHR} and \eqref{k-d.1} it holds that
\begin{align}\label{es.d(x)}
\notag\int_{\Lambda_n} |d(x)|u \diff x &	\leq 2\|d\|_{L^{\sigma'(\cdot)}(\Lambda_n)}\|u\|_{L^{\sigma(\cdot)}(\Lambda_n)}\\ \notag& \le 4\|d\|_{L^{\sigma'(\cdot)}(\Lambda_n)}\|u\|_{L^{\kappa_\eps^+}(\Lambda_n)} \\
\notag& \leq C_{14} \left(\int_{\Lambda_n} u^{\kappa_\eps^+} \diff x \right)^{\frac{1}{\kappa_\eps^+}} =  C_{14} \left(\int_{\Lambda_n} u^{r(x)}u^{\kappa_\eps^+-r(x)} \diff x \right)^{\frac{1}{\kappa_\eps^+}} \\
\notag& \leq C_{15}\left(1+k_\ast^{\frac{\kappa_\eps^+-r_\eps^+}{\kappa_\eps^+}}\right)\left(\int_{\Lambda_n} u^{r(x)}\diff x \right)^{\frac{1}{\kappa_\eps^+}} \\
& \leq C_{15}\left(1+k_\ast^{\frac{\kappa_\eps^+- r_\eps^+}{\kappa_\eps^+}}\right)\left(\int_{\Lambda_n} \mathcal{F}(x,u)\diff x \right)^{\frac{1}{\kappa_\eps^+}}.
\end{align}
Similarly, 
\begin{equation} \label{es.k(x)}
	\begin{split}
		\int_{\Lambda_n} |k(x)|u \diff x 
		& \leq C_{16}\left(1+k_\ast^{\frac{\kappa_\eps^+- r_\eps^+}{\kappa_\eps^+}}\right)\left(\int_{\Lambda_n} \mathcal{F}(x,u)\diff x \right)^{\frac{1}{\kappa_\eps^+}}.
	\end{split}
\end{equation} 
Referring \eqref{es.d(x)} and \eqref{es.k(x)}, we deduce \eqref{est.R} from \eqref{local.est.1}. 

\bigskip

{\bf Step 3: Establishing the recursion inequalities.}${}$

\vskip5pt
\noindent  With $\{Y_n\}_{n\in\N_0}$ defined in \eqref{Yn}, we aim to show  that
\begin{equation} \label{Yn+1.0}
Y_{n+1} \leq \bar{C} \left(k_\ast^{-\alpha_1} + k_\ast^{-\alpha_2} \right)b^n \left(Y_n^{1+\mu_1} + Y_n^{1+\mu_2}\right),\quad\forall n\in\N_0,
\end{equation} 
where $\bar{C}$, $b>1$, $\alpha_1$, $\alpha_2$, $\mu_1$ and $\mu_2$ are positive constants independent of $u$, $x_0$, $k_\ast$, and $n$. Let $n\in\N_0$, and let $\varphi_n$ be as in Step 2. For $\hat{\alpha},\hat{\beta}>0$ and $x\in\RN$, denote
\begin{align} \label{def.F.2}
	\wt{F}(\hat{\alpha},\hat{\beta},\varphi_n(x)):=\varphi_n(x)^{\hat{\alpha}}+a(x)^{\frac{\hat{\beta}}{q(x)}}\varphi_n(x)^{\hat{\beta}}.
\end{align} 	
  Since $t^{h(x)} \leq t^{h^+} + t^{h^-}$ for $(x,t) \in \RN\times [0,\infty)$ and $h \in C_+(\RN)$, one has
\begin{align} \label{Yn+1.1}
	Y_{n+1} & = \int_{D_{n+1}} \mathcal{F}(x,u-k_{n+1}) \diff x = \int_{D_{n+1}} \mathcal{F}\left(x,\varphi_n\right) \diff x \notag \\
	& \leq \int_{D_{n+1}} \wt{F} \left(r_\eps^-,s_\eps^-, \varphi_n(x)\right)  \diff x + \int_{D_{n+1}} \wt{F} \left(r_\eps^+,s_\eps^+, \varphi_n(x)\right)  \diff x.
\end{align}
From \eqref{S-D.loc.exp.1}, one has
\begin{align}\label{S-D-eps}
	0<\de< \min\left\{(p^*)_\eps^--r_\eps^+, (q^*)_\eps^--s_\eps^+ \right\}.
\end{align}
For $\star \in \{+,-\}$, invoking H\"older's inequality and noticing $|D_{n+1}|<1$, we obtain
\begin{align}\label{S-de-E1}
\notag	\int_{D_{n+1}} &\wt{F} \left(r_\eps^\star,s_\eps^\star, \varphi_n(x) \right)  \diff x\\
 \notag	&	\leq \left(\int_{D_{n+1}}\varphi_n^{r_\eps^\star+\de} \diff x\right)^{\frac{r_\eps^\star}{r_\eps^\star+\de}}|D_{n+1}|^{\frac{\de}{r_\eps^\star+\de}}+ \left(\int_{D_{n+1}} a(x)^{\frac{s_\eps^\star+\de}{q(x)}}\varphi_n^{s_\eps^\star+\de} \diff x\right)^{\frac{s_\eps^\star}{s_\eps^\star+\de}}|D_{n+1}|^{\frac{\de}{s_\eps^\star+\de}} \\
\notag	&\leq |D_{n+1}|^{\frac{\de}{r^++s^++\de}}\left[\left(\int_{Q_{\eps}}\varphi_n^{r_\eps^\star+\de} \diff x\right)^{\frac{r_\eps^\star}{r_\eps^\star+\de}}+\left(\int_{Q_{\eps}}a(x)^{\frac{s_\eps^\star+\de}{q(x)}}\varphi_n^{s_\eps^\star+\de} \diff x\right)^{\frac{s_\eps^\star}{s_\eps^\star+\de}}\right]\\
	&\leq |D_{n+1}|^{\frac{\de}{r^++s^++\de}}\left(\|\varphi_n\|_{L^{\mathcal{B}^\star}(Q_{\eps})}^{r_\eps^\star}+\|\varphi_n\|_{L^{\mathcal{B}^\star}(Q_{\eps})}^{s_\eps^\star}\right),	 
\end{align}
where
\begin{align*}
	\mathcal{B}^\star(x,t):=t^{r_\eps^\star+\de}+a(x)^{\frac{s_\eps^\star+\de}{q(x)}}t^{s_\eps^\star+\de} ~\text{for}~ x \in Q_{\eps}~ \text{and}~ t \in [0,+\infty).
\end{align*}
Note that $r_\eps^\star+\de<(p^*)_\eps^-$ and $s_\eps^\star+\de  < (q^*)_\eps^-$ due to \eqref{S-D-eps}. Then, by applying Proposition~\ref{RmkEm} and then Proposition~\ref{P.E2} with $\mathcal{F}=\mathcal{B}^\star$ we obtain
\begin{align}  \label{S-de-E2*}
	 \|\varphi_n\|_{L^{\mathcal{B}^\star}(Q_{\eps})}
	\leq C_{17}\|\varphi_n\|_{W^{1,\mathcal{H}}(Q_{\eps})}=C_{17}\|\varphi_n\|_{W^{1,\mathcal{H}}(\RN)}\leq C_{18}\|\varphi_n\|.
\end{align}
Putting
\begin{equation*}
	\rho_n :=	\int_{\R^N}  \Big[\mathcal{H} (x, |\nabla \varphi_n|) + V(x) \mathcal{H}(x,\varphi_n)\Big] \diff x,
\end{equation*}
we deduce from \eqref{S-de-E2*} and \eqref{m-n} that
\begin{align*}
	\|\varphi_n\|_{L^{\mathcal{B}^\star}(Q_{\eps})}^{r_\eps^\star}\leq (1+C_{18}^{r^+})\left( \rho_n^{\frac{r^{\star}_\eps}{p_\eps^-}} +  \rho_n^{\frac{r^{\star}_\eps}{q_\eps^+}} \right).
\end{align*}
Similarly, we have
\begin{align*}
	\|\varphi_n\|_{L^{\mathcal{B}^\star}(Q_{\eps})}^{s_\eps^\star}\leq (1+C_{18}^{s^+})\left( \rho_n^{\frac{s^{\star}_\eps}{p_\eps^-}} +  \rho_n^{\frac{s^{\star}_\eps}{q_\eps^+}} \right).
\end{align*}
From the last two estimates, we easily obtain
\begin{align*}
\|\varphi_n\|_{L^{\mathcal{B}^\star}(Q_{\eps})}^{r_\eps^\star}+	\|\varphi_n\|_{L^{\mathcal{B}^\star}(Q_{\eps})}^{s_\eps^\star}\leq C_{19}\left( \rho_n^{\frac{s^+_\eps}{p_\eps^-}} +  \rho_n^{\frac{r^-_\eps}{q_\eps^+}} \right).
\end{align*}
On the other hand, recalling the definitions \eqref{def.xin}, \eqref{def.Rn} and \eqref{def.Fn} for $\xi_n$, $R_n$ and $F_n$, we obtain
\begin{equation*}
\begin{split}
	\rho_n &= \int_{\Lambda_n}  \left[\mathcal{H} \left(x, \left|\xi_n^{q_\eps^+}\nabla v_n + v_n \nabla (\xi_n^{q_\eps^+})\right|\right) + V(x) \mathcal{H}(x,\varphi_n)\right] \diff x \\
	&	\leq C_{20} \int_{\Lambda_n} \big[ \mathcal{H}(x,|\nabla v_n|) + V(x) \mathcal{H}(x,v_n)\big] \xi_n^{q_\eps^+} \diff x + C_{20}2^{nq^+} \int_{\Lambda_n}  \mathcal{H}(x,v_n) \diff x \\
	&	\leq C_{21} 2^{nq^+} \Big(R_n +  F_n + |\Lambda_n|\Big).
\end{split}
\end{equation*}
Combining the last two estimates with the fact that $|\Lambda_n|<1$ we obtain
\begin{align*}
\|\varphi_n\|_{L^{\mathcal{B}^\star}(Q_{\eps})}^{r_\eps^\star}+	\|\varphi_n\|_{L^{\mathcal{B}^\star}(Q_{\eps})}^{s_\eps^\star} & \leq C_{22}2^{\frac{q^+s^+n}{p^-}} \left(R_n^{\frac{s^+_\eps}{p_\eps^-}} +  R_n^{\frac{r^-_\eps}{q_\eps^+}} +F_n^{\frac{s^+_\eps}{p_\eps^-}} +  F_n^{\frac{r^-_\eps}{q_\eps^+}}+ |\Lambda_n|^{\frac{r^-_\eps}{q_\eps^+}} \right).
\end{align*}
This and \eqref{est.R} imply
\begin{align} \label{phi.r}
\|\varphi_n\|_{L^{\mathcal{B}^\star}(Q_{\eps})}^{r_\eps^\star}+	\|\varphi_n\|_{L^{\mathcal{B}^\star}(Q_{\eps})}^{s_\eps^\star} \leq C_{23}2^{\frac{2q^+s^+}{p^-}n} \left(1+k_\ast^{-r^+}\right)^{\frac{s_\eps^+}{p_\eps^-}} \left(F_n^{\frac{r^-_\eps}{\kappa_\eps^+ q_\eps^+}} + F_n^{\frac{s^+_\eps}{ p_\eps^-}} + |\Lambda_n|^{\frac{r^-_\eps}{q_\eps^+}} \right).
\end{align}
Utilizing \eqref{S-de-E1} and \eqref{phi.r}, we obtain
\begin{align} \label{Yn+1.2}
	\int_{D_{n+1}} \wt{F} \left(r_\eps^\star,s_\eps^\star, \varphi_n(x) \right)  \diff x \leq C_{24} b_0^n |D_{n+1}|^{\ell_\de} \left(1+k_\ast^{-r^+}\right)^{\frac{s_\eps^+}{p_\eps^-}} \left(F_n^{\frac{r^-_\eps}{\kappa_\eps^+ q_\eps^+}} + F_n^{\frac{s^+_\eps}{ p_\eps^-}} + |\Lambda_n|^{\frac{r^-_\eps}{q_\eps^+}} \right),
\end{align}
where
\begin{equation}\label{l.del}
	 b_0:= 2^{\frac{2q^+s^+}{p^-}}>1 \quad \text{and}\  \ell_\de:=\frac{\de}{r^+ + s^+ + \de}.
\end{equation}
It follows from  \eqref{Yn+1.1} and \eqref{Yn+1.2} that
\begin{align*} \label{Yn+1.3}
	Y_{n+1} \leq C_{25} b_0^n |D_{n+1}|^{\ell_\de} \left(1+k_\ast^{-r^+}\right)^{\frac{s_\eps^+}{p_\eps^-}} \left(F_n^{\frac{r^-_\eps}{\kappa_\eps^+ q_\eps^+}} + F_n^{\frac{s^+_\eps}{ p_\eps^-}} + |\Lambda_n|^{\frac{r^-_\eps}{q_\eps^+}} \right).
\end{align*}
Utilizing \eqref{def.Fn} and \eqref{S-|A_{k_{n+1}}|}, the last estimate yields
\begin{multline*}
	Y_{n+1} \leq C_{25} b_0^n \left[C_2 \left(k_{*}^{-r^{-}}+k_{*}^{-r^{+}}\right)2^{nr^+} Y_{n}\right]^{\ell_\delta} \left(1+k_\ast^{-r^+}\right)^{\frac{s_\eps^+}{p_\eps^-}}\times\\
	\times\left[\left(C_1 2^{ns^+}Y_n\right)^{\frac{r^-_\eps}{\kappa_\eps^+ q_\eps^+}}+ \left(C_1 2^{ns^+}Y_n\right)^{\frac{s^+_\eps}{ p_\eps^-}}+\left(2C_2 \left(1+k_{*}^{-r^{+}}\right)2^{nr^+} Y_{n}\right)^{\frac{r^-_\eps}{q_\eps^+}}  \right]. 
\end{multline*}
This leads to
\begin{align*} 
	Y_{n+1} &\leq C_{26} b_0^n \left(k_\ast^{-r^-\ell_\delta} + k_\ast^{-r^+\ell_\delta} \right) \left(1+k_\ast^{-\frac{r^+s^+}{p^-}}\right)2^{\frac{2(s^+)^2}{p^-}n} \left(Y_n^{\frac{r^-_\eps}{\kappa_\eps^+ q_\eps^+}} + Y_n^{\frac{s^+_\eps}{ p_\eps^-}}  \right) \notag\\
	&\leq C_{27}  \left(k_\ast^{-r^-\ell_\delta} + k_\ast^{-r^+\left(\ell_\delta+\frac{s^+}{p^-}\right)} \right)\left(2^{\frac{2(s^+)^2}{p^-}}b_0\right)^n \left(Y_n^{\frac{r^-_\eps}{\kappa_\eps^+ q_\eps^+}} + Y_n^{\frac{s^+_\eps}{ p_\eps^-}}  \right). 
\end{align*}
Thus, we arrive at
\begin{align*} 
	Y_{n+1} \leq C_{27}  \left(k_\ast^{-\alpha_1} + k_\ast^{-\alpha_2} \right)b^n \left(Y_n^{1+\mu_1} + Y_n^{1+\mu_2}\right),
\end{align*}
where
\begin{gather*}
	0 < \alpha_1:=r^-\ell_\de < \alpha_2:= r^+ \left(\ell_\de + \frac{s^+}{p^-}\right),\ \
	1< b:= 2^{\frac{2(s^+)^2}{p^-}}b_0, \\
\end{gather*}
and 
\begin{equation*}
	0 < \frac{\delta}{\kappa^+}\leq \mu_1:= \inf_{x_0\in\R^N}\frac{r^-_{x_0,\eps}}{\kappa_{x_0,\eps}^+ q_{x_0,\eps}^+} -1 \leq \mu_2:=\sup_{x_0\in\R^N}\frac{s^+_{x_0,\eps}}{ p_{x_0,\eps}^-} -1
\end{equation*}
(see \eqref{S-D.loc.exp.1}). Thus, we establish the recursion sequence \eqref{Yn+1.0} with $\bar{C}=C_{27}+1$.

\bigskip

{\bf Step 4: Establishing a priori bounds.}${}$

\vskip5pt
\noindent We shall show \eqref{PTS.priori-l} by applying Lemma \ref{leRecur} for $\{Y_n\}_{n\in\N_0}$. Clearly, \eqref{PTS.priori-l} holds when $\int_{ Q_{\eps}} \mathcal{F}\left(x, |u|\right) \diff x=0$. We now consider the case $\int_{ Q_{\eps}} \mathcal{F}\left(x, |u|\right) \diff x>0$. Let $\bar{K}:=k_\ast^{-\alpha_1} + k_\ast^{-\alpha_2}$ for simplicity.  By Lemma \ref{leRecur}, it holds that
\begin{equation} \label{limYn.0}
	Y_n \to 0 \quad \text{as} \ n \to \infty
\end{equation}
provided 
\begin{equation} \label{Y0}
	\begin{split} 
			Y_0 &= \int_{A_{k_\ast} \cap Q_{\eps}} \mathcal{F}(x,u-k_\ast) \diff x  \\
		& \leq \min\left\{(2\bar{C}\bar{K})^{-\frac{1}{\mu_1}}b^{-\frac{1}{\mu_1^2}}, \round{2\bar{C}\bar{K}}^{-\frac{1}{\mu_2}}b^{-\frac{1}{\mu_1\mu_2}-\frac{\mu_2-\mu_1}{\mu_2^2}}\right\}.
	\end{split}
\end{equation} 
Note that 
\begin{equation*}
	Y_0 = \int_{ Q_{\eps}} \mathcal{F}\left(x, (u-k_\ast)_+\right) \diff x \leq \int_{ Q_{\eps}} \mathcal{F}\left(x, |u|\right) \diff x.
\end{equation*}
Hence, \eqref{Y0} is satisfied if $k_\ast$ fulfills 
\begin{equation} \label{k*.1}
	\int_{ Q_{\eps}} \mathcal{F}\left(x, |u|\right) \diff x \leq \min\left\{(2\bar{C}\bar{K})^{-\frac{1}{\mu_1}}b^{-\frac{1}{\mu_1^2}},\  \round{2\bar{C}\bar{K}}^{-\frac{1}{\mu_2}}b^{-\frac{1}{\mu_1\mu_2}-\frac{\mu_2-\mu_1}{\mu_2^2}}\right\},
\end{equation}
which is equivalent to 
\begin{equation} \label{k*.1'}
	\begin{cases}
	\bar{K}
	\leq (2\bar{C})^{-1}b^{-\frac{1}{\mu_1}}\Big(	\int_{ Q_{\eps}}\mathcal{F}(x,|u|) \diff x\Big)^{-\mu_1},\\
	\bar{K} 
	  \leq
	(2\bar{C})^{-1}b^{-\frac{1}{\mu_1}-\frac{\mu_2-\mu_1}{\mu_2}}\Big(\int_{ Q_{\eps}} \mathcal{F}(x,|u|) \diff x\Big)^{-\mu_2}.
	\end{cases}
\end{equation}
Meanwhile, \eqref{k*.1'} is fulfilled if it holds that
\begin{equation*}
	\begin{cases}
	2k_\ast^{-\alpha_1}
	\leq (2\bar{C})^{-1}b^{-\frac{1}{\mu_1}-\frac{\mu_2-\mu_1}{\mu_2}} \min \left\{\Big(	\int_{ Q_{\eps}}\mathcal{F}(x,|u|) \diff x\Big)^{-\mu_1}, \Big(	\int_{ Q_{\eps}} \mathcal{F}(x,|u|) \diff x\Big)^{-\mu_2}\right\},\\
	2k_\ast^{-\alpha_2}
	\leq (2\bar{C})^{-1}b^{-\frac{1}{\mu_1}-\frac{\mu_2-\mu_1}{\mu_2}} \min \left\{\Big(	\int_{ Q_{\eps}}\mathcal{F}(x,|u|) \diff x\Big)^{-\mu_1}, \Big(	\int_{ Q_{\eps}} \mathcal{F}(x,|u|) \diff x\Big)^{-\mu_2}\right\},
\end{cases}
\end{equation*}
which is equivalent to
\begin{equation*}
	\begin{cases}
	k_\ast
	\geq( 4\bar{C})^{\frac{1}{\alpha_1}} b^{\frac{1}{\alpha_1} \left(\frac{1}{\mu_1}+\frac{\mu_2-\mu_1}{\mu_2}\right)} \max \left\{\Big(	\int_{ Q_{\eps}}\mathcal{F}(x,|u|) \diff x\Big)^{\frac{\mu_1}{\alpha_1}}, \Big(	\int_{ Q_{\eps}} \mathcal{F}(x,|u|) \diff x\Big)^{\frac{\mu_2}{\alpha_1}}\right\},\\
	k_\ast
	\geq( 4\bar{C})^{\frac{1}{\alpha_2}} b^{\frac{1}{\alpha_2} \left(\frac{1}{\mu_1}+\frac{\mu_2-\mu_1}{\mu_2}\right)} \max \left\{\Big(	\int_{ Q_{\eps}}\mathcal{F}(x,|u|) \diff x\Big)^{\frac{\mu_1}{\alpha_2}}, \Big(	\int_{ Q_{\eps}} \mathcal{F}(x,|u|) \diff x\Big)^{\frac{\mu_2}{\alpha_2}}\right\}.
\end{cases}
\end{equation*}
Therefore, by choosing
\begin{equation} \label{k*.2}
	k_\ast=( 4\bar{C})^{\frac{1}{\alpha_1}} b^{\frac{1}{\alpha_1} \left(\frac{1}{\mu_1}+\frac{\mu_2-\mu_1}{\mu_2}\right)} \max \left\{\Big(\int_{ Q_{\eps}}\mathcal{F}(x,|u|) \diff x\Big)^{\frac{\mu_1}{\alpha_2}}, \Big(	\int_{ Q_{\eps}} \mathcal{F}(x,|u|) \diff x\Big)^{\frac{\mu_2}{\alpha_1}}\right\},
\end{equation}
we obtain \eqref{Y0}; hence, \eqref{limYn.0} follows.

On the other hand, noting $D_n=A_{k_n} \cap Q_{\eps_n}$ we have
\begin{align*}
	Y_n  = \int_{D_n} \mathcal{F}(x,u-k_n) \diff x &  \longrightarrow \int_{A_{2k_\ast} \cap Q_{\frac{\eps}{3}}} \mathcal{F}(x,u-2k_\ast) \diff x=\int_{Q_{\frac{\eps}{3}}} \mathcal{F}\left(x,(u-2k_\ast)_+ \right) \diff x 
\end{align*}
as \ $n \to \infty$. Using this and \eqref{limYn.0} we get
\begin{equation*}
\int_{Q_{\frac{\eps}{3}}} \mathcal{F}\left(x,(u-2k_\ast)_+ \right) \diff x = 0.
\end{equation*}
From this we get  
\begin{equation*}
	u \leq 2k_\ast \quad \text{a.e. on} \ Q_{\frac{\eps}{3}}. 
\end{equation*}
By replacing $u$ with $-u$ in the arguments above, we also obtain
\begin{equation*}
	(-u) \leq 2k_\ast \quad \text{a.e. on} \ Q_{\frac{\eps}{3}}. 
\end{equation*}
From the last two inequalities and \eqref{k*.2} imply
\begin{equation*}
	\|u\|_{L^\infty(Q_{\frac{\eps}{3}})} \leq C \max \left\{\Big(\int_{ Q_{\eps}}\mathcal{F}(x,|u|) \diff x\Big)^{\frac{\mu_1}{\alpha_2}}, \Big(	\int_{ Q_{\eps}} \mathcal{F}(x,|u|) \diff x\Big)^{\frac{\mu_2}{\alpha_1}}\right\},
\end{equation*}
where $C$ is independent of $u$ and $x_0$. In other words, we arrive at \eqref{PTS.priori-l}.

From \eqref{PTS.priori-l} we easily obtain \eqref{Sub-bound} and the decay estimate \eqref{decay}. The proof is complete.

\end{proof}

\begin{proof}[Proof of Theorem~\ref{T.Sub2}] The proof is almost identical to that of Theorem~\ref{T.Sub1}. The only difference is that we apply Proposition~\ref{P.E1} instead of Proposition~\ref{P.E2}.
\end{proof}

\section{The boundedness of solution for critical problems}\label{Cri}

In this section, we investigate the boundedness of weak solutions to problem \eqref{eq1} when the reaction term possesses critical growths. Throughout this section, we assume that the assumption \eqref{O}  holds. Furthermore, we make the following assumption.

\begin{enumerate}[label=\textnormal{(H$_2$)},ref=\textnormal{H$_2$}]
	\item\label{H2}
	$\mathcal{A}\colon\RN \times\R\times\R^N\to \R^N$ and $\mathcal{B}\colon\RN \times \R\times \R^N\to \R$ are Carath\'eodory functions such that
	\begin{enumerate}[label=\textnormal{(\roman*)},ref=\textnormal{(H$_2$)(\roman*)}]
		\item
		$ |\mathcal{A}(x,t,\xi)|
		\leq M_1 \left[k(x)+ |t|^{p^*(x)-1}+a(x)^{\frac{q^*(x)}{q(x)}}|t|^{q^*(x)-1}+|\xi|^{p(x)-1} +a(x)|\xi|^{q(x)-1}\right],$
		\item\label{H2ii}
		$ \mathcal{A}(x,t,\xi)\cdot \xi \geq M_2 \left[|\xi|^{p(x)} +a(x)|\xi|^{q(x)}\right]-M_3\left[|t|^{p^*(x)}+a(x)^{\frac{q^*(x)}{q(x)}}|t|^{q^*(x)}+1\right],$
		\item\label{H2iii}
		$ |\mathcal{B}(x,t,\xi)|
		\leq L \left[ d(x)+ |t|^{p^*(x)-1}+a(x)^{\frac{q^*(x)}{q(x)}}|t|^{q^*(x)-1}+|\xi|^{\frac{p(x)}{(p^*)'(x)}} +a(x)^{\frac{N+1}{N}}|\xi|^{\frac{q(x)}{(q^*)'(x)}}\right],$ 
	\end{enumerate}
	for a.\,a.\,$x\in\RN$ and for all $(t,\xi) \in \R\times\R^N$, where $M_1,M_2,M_3$ and $L$ are positive constants; $k \in L_{\loc}^{\rho'(\cdot)}(\RN)$, $d \in L_{\loc}^{\sigma'(\cdot)}(\RN)$ with $\rho,\sigma \in C_+(\RN)\cap C^{0,1}(\RN)$ such that $\kappa:=\max\{\rho, \sigma \} \ll \frac{p^\ast}{q}$ and
	\begin{equation}\label{k-d.c}
		M:=\sup_{y \in \RN} \int_{Q_1(y)} \left(|k(x)|^{\rho'(x)}+|d(x)|^{\sigma'(x)}\right) \diff x<\infty.
	\end{equation}
\end{enumerate}

\medskip

A weak solution of problem \eqref{eq1} under the assumptions \eqref{O} and \eqref{H2} is also defined as in  Definition \ref{D.wsol1}. It is clear that all the integrals in \eqref{def.weak.sol} are well defined in view of Propositions \ref{P.E1} and \ref{P.E2}.

The boundedness for critical problems, which is our second main result, is stated as follows.
\begin{theorem} \label{T.Cri1}
	Let hypotheses \eqref{O} and \eqref{H2} be fulfilled. Then, any weak solution $u\in X_V$ of problem \eqref{eq1} belongs to $L^\infty(\RN)$. 
\end{theorem}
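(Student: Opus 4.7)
The proof follows the scheme of Theorem~\ref{T.Sub1}, with the truncated energy recast in terms of $\mathcal{G}^\ast$ and an additional localization step built on the absolute continuity of $\int \mathcal{G}^\ast(x,|u|)\diff x$. Since $u\in X_V$, Proposition~\ref{P.E2} applied with $(r,s)=(p^\ast,q^\ast)$ gives $u\in L^{\mathcal{G}^\ast}(\R^N)$; a standard tail-plus-uniform-absolute-continuity argument then shows that, for any prescribed $\eta>0$, one can choose $\varepsilon=\varepsilon(\eta,u)\in(0,1)$ with
\[
\sup_{x_0\in\R^N}\int_{Q_\varepsilon(x_0)}\mathcal{G}^\ast(x,|u|)\diff x<\eta.
\]
Fix such $\varepsilon$ with $\eta$ sufficiently small (to be chosen later) and an arbitrary $x_0\in\R^N$. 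Reuse the same sequences $k_n=k_\ast(2-2^{-n})$, $\varepsilon_n\downarrow\varepsilon/3$, $\bar{\varepsilon}_n$, cutoffs $\xi_n$ with $|\nabla\xi_n|\lesssim 2^n$, level sets $D_n=A_{k_n}\cap Q_{\varepsilon_n}$, and test function $\varphi_n=(u-k_{n+1})_+\xi_n^{\gamma}$ as in Theorem~\ref{T.Sub1}, but replace the truncated energy by
\[
Y_n:=\int_{D_n}\mathcal{G}^\ast(x,u-k_n)\diff x,
\]
and take the cutoff exponent $\gamma$ of order $(p^\ast/q)^+$---the ``$p^\ast/q$-localization'' alluded to in the introduction.

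\textbf{Caccioppoli with critical absorption.} I would derive a Caccioppoli-type estimate for
\[
R_n:=\int_{\Lambda_n}\bigl[\mathcal{H}(x,|\nabla v_n|)+V(x)\mathcal{H}(x,v_n)\bigr]\xi_n^{\gamma}\diff x,\quad v_n=(u-k_{n+1})_+,
\]
exactly as in Step~2 of Theorem~\ref{T.Sub1} but using \ref{H2} instead of \ref{H1}. The decisive new contribution coming from the critical growth in \ref{H2iii} is a term of the form $\int_{\Lambda_n}\mathcal{G}^\ast(x,u)\diff x$, which the subcritical H\"older trick cannot handle. The idea is to split $\mathcal{G}^\ast(x,u)\leq C\bigl(\mathcal{G}^\ast(x,v_n)+k_\ast^{(q^\ast)^+}\chi_{\Lambda_n}\bigr)$ on $\Lambda_n$ and to bound the first piece by Proposition~\ref{RmkEm} with $r=p^\ast$, $s=q^\ast$ on $Q_{\bar{\varepsilon}_n}$, followed by the modular--norm equivalences of Proposition~\ref{Prop_m-n_W(1,H)}. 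The resulting coefficient in front of $R_n$ is controlled by a positive power of $\int_{Q_\varepsilon(x_0)}\mathcal{G}^\ast(x,|u|)\diff x$, hence by a small multiple of $\eta$. Choosing $\eta$ small enough that this coefficient is $\leq 1/2$ permits absorption of the critical remainder back into $R_n$; this absorption, relying on the uniform smallness obtained at the start, is the heart of the argument and is what distinguishes the critical proof from the subcritical one.

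\textbf{Recursion and conclusion.} With $R_n$ now dominated by $F_n+|\Lambda_n|$ plus $k_\ast$-dependent lower-order data (in the style of \eqref{est.R}), estimate $Y_{n+1}$ by applying Proposition~\ref{RmkEm} once more on $Q_{\bar{\varepsilon}_n}$ with $(r,s)=(p^\ast,q^\ast)$ and passing between modular and norm via Proposition~\ref{Prop_m-n_W(1,H)}. Combined with the level-set estimates $|D_{n+1}|\leq C\bigl(1+k_\ast^{-(q^\ast)^+}\bigr)2^{n(q^\ast)^+}Y_n$ and $F_n\leq C\,2^{n(q^\ast)^+}Y_n$ (analogues of \eqref{S-|A_{k_{n+1}}|} and \eqref{def.Fn}), this yields a recursion
\[
Y_{n+1}\leq K(k_\ast)\,b^n\bigl(Y_n^{1+\mu_1}+Y_n^{1+\mu_2}\bigr),\qquad n\in\N_0,
\]
with $b>1$, $K(k_\ast)$ containing a negative power of $k_\ast$, and $\mu_1,\mu_2>0$ produced by the gap $(p^\ast/q)^+>1$ encoded in the cutoff exponent $\gamma$. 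Since $Y_0\leq\int_{Q_\varepsilon(x_0)}\mathcal{G}^\ast(x,|u|)\diff x<\eta$ by construction, taking $k_\ast$ large forces the hypothesis of Lemma~\ref{leRecur}, so $Y_n\to 0$, i.e.\ $u\leq 2k_\ast$ a.e.\ on $Q_{\varepsilon/3}(x_0)$. Replacing $u$ by $-u$ yields the symmetric lower bound, and since $\varepsilon$ and $k_\ast$ do not depend on $x_0$, I conclude $u\in L^\infty(\R^N)$.

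\textbf{Main obstacle.} The principal difficulty is the loss of the measure-gain factor $|D_{n+1}|^{\ell_\delta}$ that powered the subcritical iteration. I expect the technical core to be the simultaneous achievement of (i) uniform-in-$x_0$ smallness of $\int_{Q_\varepsilon(x_0)}\mathcal{G}^\ast(x,|u|)\diff x$, obtained from absolute continuity of the integral, and (ii) absorption of the critical remainder into the Caccioppoli energy $R_n$ using precisely that smallness. Once both are in place, the cutoff exponent $\gamma\simeq(p^\ast/q)^+$ produces a strictly superlinear recursion and the argument closes along the lines of Theorem~\ref{T.Sub1}.
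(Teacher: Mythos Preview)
Your plan contains a genuine gap at the ``critical absorption'' step, and it also misidentifies where the superlinear recursion exponent comes from.

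\medskip
\textbf{The absorption step does not work as described.} You propose to bound $\int_{\Lambda_n}\mathcal{G}^\ast(x,v_n)\diff x$ via Proposition~\ref{RmkEm} on $Q_{\bar\varepsilon_n}$ followed by the modular--norm relations of Proposition~\ref{Prop_m-n_W(1,H)}, and claim this produces a term linear in $R_n$ with a coefficient that is a positive power of $\eta$. It does not. The embedding gives
\[
\int_{Q_{\bar\varepsilon_n}}\mathcal{G}^\ast(x,v_n)\diff x\;\leq\;C\Bigl(\int_{Q_{\bar\varepsilon_n}}\bigl[\mathcal{H}(x,|\nabla v_n|)+\mathcal{H}(x,v_n)\bigr]\diff x\Bigr)^{\frac{(p^\ast)^-_\varepsilon}{q^+_\varepsilon}}
\]
when the modulars involved are below $1$. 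The right-hand side is the \emph{uncut} $W^{1,\mathcal{H}}$-energy on $Q_{\bar\varepsilon_n}$, not $R_n=\int[\mathcal{H}(x,|\nabla v_n|)+V\mathcal{H}(x,v_n)]\xi_n^\gamma\diff x$; since $\xi_n\leq 1$, the uncut energy dominates $R_n$, so you cannot absorb it into the left-hand side of the Caccioppoli inequality. No coefficient of the form $\eta^\alpha$ appears in this chain. (A H\"older-based splitting in the spirit of Brezis--Kato could in principle produce a small coefficient, but that is not what you wrote, and carrying it out in the double-phase variable-exponent setting is considerably more delicate than your sketch suggests.)

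\medskip
\textbf{The source of superlinearity is misattributed.} You state that $\mu_1,\mu_2>0$ are ``produced by the gap $(p^\ast/q)^+>1$ encoded in the cutoff exponent $\gamma$.'' In the paper the cutoff exponent is simply $\tilde q_i^+$, as in the subcritical proof; raising it to order $(p^\ast/q)^+$ plays no role. The genuine gain comes from applying the embedding in the \emph{forward} direction (Claim~1 in the paper): when all modulars are below $1$, one has $\int_{Q_i}\mathcal{G}^\ast(x,v_n)\diff x\leq C\bigl[\text{$\mathcal{H}$-energy on }Q_i\bigr]^{(p^\ast)_i^-/q_i^+}$, and the exponent $(p^\ast)_i^-/q_i^+>1$ is what drives the recursion.

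\medskip
\textbf{What the paper actually does.} The paper does \emph{not} absorb and does \emph{not} iterate locally on a single cube with shrinking radii. Instead it: (i) fixes $k_\ast$ large so that the \emph{global} truncated energies $\int_{A_{k_\ast}}[\mathcal{H}(x,|\nabla u|)+V\mathcal{H}(x,u)+\mathcal{G}^\ast(x,u)]\diff x<1$; (ii) runs the recursion on the global quantity $Z_n:=\int_{A_{k_n}}\mathcal{G}^\ast(x,u-k_n)\diff x$; (iii) decomposes $\R^N$ into a \emph{fixed} grid of cubes $Q_i$ with tripled cubes $\tilde Q_i$ having bounded overlap; (iv) on each $Q_i$ combines the embedding (Claim~1) with a local Caccioppoli estimate (Claim~2, no absorption) to obtain $\int_{Q_i}\mathcal{G}^\ast(x,v_n)\diff x\leq C\bigl[\int_{\Lambda_{n,i}}\mathcal{G}^\ast(x,u)\diff x\bigr]^{1+\delta_0}$; and (v) sums over $i$, using $\sum_i a_i^{1+\delta_0}\leq\bigl(\sum_i a_i\bigr)^{1+\delta_0}$ and the finite overlap to close the recursion $Z_{n+1}\leq Cb^nZ_n^{1+\delta_0}$. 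The phrase ``localization related to $p^\ast/q$'' in the introduction refers to choosing the grid size $\varepsilon$ so that $\kappa_i^++\delta<(p^\ast)_i^-/q_i^+$ on each cube, not to the cutoff exponent.
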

Again, we state the boundedness for a possibly weaker solution when $\essinf_{x \in \R^N}V(x)>0$. 
\begin{theorem}\label{T.Cri2}
	Let \eqref{D} and \eqref{H2} hold, and let $ V\in L^1_{\loc}(\RN)$ satisfy $\essinf_{x \in \R^N}V(x)>0$. Then, any weak solution $u\in W^{1,\mathcal{H}}_V(\RN)$ of problem \eqref{eq1} belongs to $L^\infty(\RN)$.
\end{theorem}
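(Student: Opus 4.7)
The plan is to follow the scheme developed for Theorem~\ref{T.Cri1} verbatim, simply replacing the solution space $X_V$ by $W^{1,\mathcal{H}}_V(\RN)$ throughout; this parallels exactly the relation between Theorem~\ref{T.Sub2} and Theorem~\ref{T.Sub1}. The substitution is admissible for two reasons. First, Proposition~\ref{P.E1} guarantees that under $\essinf_{x\in\RN}V(x)>0$, $W^{1,\mathcal{H}}_V(\RN)$ is a separable reflexive Banach space continuously embedded into $W^{1,\mathcal{H}}(\RN)$ and hence, via Proposition~\ref{RmkEm}, into $L^{\mathcal{G}^\ast}(Q_\varepsilon(x_0))$ on every localization cube. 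Second, Lemma~\ref{lem.(u-k)+} is proved simultaneously for $X_V$ and $W^{1,\mathcal{H}}_V(\RN)$, so the truncations $(u-k_{n+1})_+$ used throughout the De Giorgi iteration (and their products with smooth cutoffs $\xi_n^{q_\varepsilon^+}$) remain in $W^{1,\mathcal{H}}_V(\RN)$.

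Starting from these preparations, I would fix $x_0\in\RN$ and a small $\varepsilon>0$, introduce sequences $\{k_n\}$, $\{\varepsilon_n\}$, $\{\bar\varepsilon_n\}$ just as in Step~2 of the proof of Theorem~\ref{T.Sub1}, and define the recursion sequence
\[
Y_n := \int_{A_{k_n}\cap Q_{\varepsilon_n}(x_0)} \mathcal{G}^\ast(x,u-k_n)\,\diff x, \qquad n\in\N_0.
\]
Testing the weak equation against $\varphi_n:=(u-k_{n+1})_+ \xi_n^{q_\varepsilon^+}$ and using \eqref{H2} together with Young's inequality to absorb the gradient terms, I expect to obtain a truncated energy estimate on $Q_{\bar\varepsilon_n}(x_0)$ controlling $R_n$ by $2^{nq^+}\bigl(\int_{\Lambda_n}\mathcal{G}^\ast(x,u)\,\diff x + |\Lambda_n| + \text{lower order}\bigr)$, in complete analogy with the corresponding step of Theorem~\ref{T.Sub1}.

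The decisive departure from the subcritical argument is that H\"older's inequality no longer provides a strict power gain. In its place I would apply Proposition~\ref{RmkEm} with $\mathcal{F}=\mathcal{G}^\ast$ to convert the Musielak--Sobolev norm of $\varphi_n$ into a modular estimate at the \emph{critical} exponents $p^\ast,q^\ast$. The resulting recursion takes the form $Y_{n+1}\le \bar{K}\,b^n(Y_n^{1+\mu_1}+Y_n^{1+\mu_2})$, but \emph{only} under a smallness condition on the initial term $Y_0$. The hard part---and this is where the critical regime genuinely differs from the subcritical one---is to enforce this smallness; the plan is to exploit the absolute continuity of the finite integral $\int_{\RN}\mathcal{G}^\ast(x,u)\,\diff x$ (which is finite precisely by Proposition~\ref{P.E1}), choosing $k_\ast$ large enough so that $Y_0\le \int_{Q_\varepsilon(x_0)}\mathcal{G}^\ast(x,(u-k_\ast)_+)\,\diff x$ meets the hypothesis of Lemma~\ref{leRecur}. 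From $Y_n\to 0$ one concludes $u\le 2k_\ast$ on $Q_{\varepsilon/3}(x_0)$; applying the same argument to $-u$ and letting $x_0$ range over $\RN$ delivers $u\in L^\infty(\RN)$.

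A secondary technical point is the admissibility of $\varphi_n$ as a test function: the weak formulation is posed a priori only against $\varphi\in C_c^\infty(\RN)$, whereas $\varphi_n$ merely lies in $W^{1,\mathcal{H}}_V(\RN)$ with compact support. I would handle this by a standard mollification argument, convergent in the modular topology thanks to the local integrability of $V$, $k$, $d$ and of the coefficient $a$, exactly as in the approximation step at the end of the proof of Lemma~\ref{lem.(u-k)+}.
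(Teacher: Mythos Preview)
Your opening plan---repeat the proof of Theorem~\ref{T.Cri1} verbatim with $W^{1,\mathcal{H}}_V(\RN)$ in place of $X_V$, invoking Proposition~\ref{P.E1} rather than Proposition~\ref{P.E2}---is precisely what the paper does (in one sentence). However, the detailed scheme you then describe is \emph{not} the argument of Theorem~\ref{T.Cri1}: you have imported the shrinking-cube machinery of Theorem~\ref{T.Sub1} (a moving center $x_0$, nested radii $\varepsilon_n,\bar\varepsilon_n$, cutoffs $\xi_n$ with $|\nabla\xi_n|\sim 2^n$, and a \emph{local} recursion quantity $Y_n$ on $Q_{\varepsilon_n}(x_0)$). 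The paper's critical-case proof instead runs a \emph{global} recursion
\[
Z_n=\int_{A_{k_n}}\mathcal{G}^*(x,u-k_n)\,\diff x,
\]
decomposes $\RN$ into a \emph{fixed} grid $\{Q_i\}_{i\in\N}$ of side $\varepsilon$, and uses a single cutoff $\xi$ per cube, independent of $n$ (see \eqref{def.xi}). The power gain $1+\delta_0$ in \eqref{Recur} comes purely from the local Sobolev exponent $(p^*)_i^-/(\kappa_i^+q_i^+)>1$ in Claim~1 combined with Claim~2, and the cube-wise estimates are then \emph{summed over $i\in\N$} to recover the global recursion for $Z_n$. Your hybrid can also be made to work---the extra $2^{nq^+}$ factors contributed by $\nabla\xi_n$ are harmlessly absorbed into the base $b$, and uniformity in $x_0$ holds because $Y_0\le\int_{A_{k_*}}\mathcal{G}^*(x,u)\,\diff x$ independently of the center---but the paper's fixed-grid route is cleaner in the critical regime: it avoids the shrinking geometry entirely and runs the iteration once globally rather than once for every $x_0\in\RN$.
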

With the same reason as in the previous section, we only present a proof of Theorem~\ref{T.Cri1}.
\begin{proof}[Proof of Theorem~\ref{T.Cri1}]
	Let $u\in X_V$ be a weak solution of problem \eqref{eq1}, and $k_*>1$ be sufficiently large such that
	\begin{align}\label{k*}
		\int_{A_{k_{*}}}\mathcal{H}(x,|\nabla u|) \diff x+\int_{A_{k_{*}}}V(x)\mathcal{H}(x,u) \diff x+\int_{A_{k_{*}}}\mathcal{G}^*(x,u) \diff x <1,
	\end{align}
where $A_k$ ($k\in\R$) and $\mathcal{G}^*$ are defined as in \eqref{def.Ak} and \eqref{def.G}, respectively. 

Let $\{k_n\}_{n \in \N_0}$ be the sequence defined by  \eqref{def.kn}.  We  define the recursion sequence $\{Z_n\}_{n \in \N_0}$ as
\begin{align*}
	Z_n:=\int_{A_{k_n}}\mathcal{G}^*(x,u-k_n) \diff x ~ \text{for}~ n \in \N_0.
\end{align*}
Similar arguments to those leading to \eqref{def.Fn} give
\begin{align} \label{G*-Zn}
	\int_{A_{k_{n+1}}} \mathcal{G}^*(x,u) \diff x \leq  2^{(n+2)(q^\ast)^+}Z_n,\quad \forall n\in \N_0.
\end{align}

	Dividing $\RN$ into cubes $Q_i$ ($\in \N$), which  are disjoint with edges parallel to the coordinate axes and length of $\eps \in \left(0,\frac{1}{3}\right)$. Let $\wt{Q}_i$ be the cube that is concentric with $Q_i$ and has an edge length of $3\eps$. Note that each $\wt{Q}_i$ contains $3^N$ cubes $Q_j\ (j \in \N)$, and each $Q_i$ is contained in $3^N$ cubes $\wt{Q}_j\ (j \in \N)$.
For a function $h\in C(\RN)$ and each $i\in\N$, we denote
\begin{align*}
	h_i^+= \sup_{x\in Q_i} h(x), \quad 	h_i^-= \inf_{x\in Q_i} h(x),
\end{align*}
and 
\begin{align*}
	\wt{h}_i^+= \sup_{x\in \wt{Q}_i} h(x), \quad 	\wt{h}_i^-= \inf_{x\in \wt{Q}_i} h(x).
\end{align*}
By the uniform continuity of variable exponents, we may fix a sufficiently small $\eps$ such that
\begin{align}\label{C.loc.exp}
	q_i^+ + \delta  < (p^\ast)_i^-\ \ \text{and}\ \ \kappa_i^+ + \delta < \frac{(p^\ast)_i^-}{q^+_i}
	\quad \text{for all } i\in\N,
\end{align}
where
\begin{equation}
	\delta:=\frac{1}{2} \min \left\{\left(p^\ast-q\right)^-, \left(\frac{p^\ast}{q}- \kappa\right)^- \right\}>0.
\end{equation}
Let $n,i\in\N$ be arbitrary but fixed, and let $v_n:=(u-k_{n+1})_+$. In the rest of the proof, $C$ and $C_j$ ($j\in\N$) stand for positive constants independent of $u$, $k_\ast$, $Q_i$ and $n$. 

\medskip

\textbf{Claim 1.} It holds that
\begin{align}\label{G^*.E}
	\int_{Q_i }\mathcal{G}^*(x,v_n) \diff x\leq  C_1 \left[\int_{Q_i} \Big(\mathcal{H}(x,|\nabla v_n|) + \mathcal{H}(x,v_n)\Big) \diff x\right]^{\frac{(p^\ast)^-_i}{q^+_i}}.
\end{align}
Indeed, we first note that by \eqref{k*}, 
\begin{equation}\label{int<1}
\int_{\RN}\mathcal{G}^*(x,v_n)  \diff x=\int_{A_{k_{n+1}}}\mathcal{G}^*(x,v_n)  \diff x \leq \int_{A_{k_{*}}}\mathcal{G}^*(x,u) \diff x<1.
\end{equation}
Hence, we apply Propositions \ref{prop.nor-mod.D} and \ref{RmkEm} to obtain
\begin{align*}
	\int_{Q_i}\mathcal{G}^*(x,v_n) \, \diff x \leq \|v_n\|_{L^{\mathcal{G}^*}(Q_i)}^{(p^*)_i^-}\leq C_2 \|v_n\|_{W^{1,\mathcal{H}}(Q_i)}^{(p^*)_i^-}.
\end{align*}
On account of \eqref{m-n} and \eqref{k*} we easily derive \eqref{G^*.E} from the last estimate. Thus, Claim 1 is proved.

\medskip
\textbf{Claim 2.} 
It holds that\begin{align*}
	\int_{Q_i} &\Big[ \mathcal{H}(x,|\nabla v_n|) + \left(1+V(x)\right) \mathcal{H}(x,v_n) \Big] \diff x\\
	& \leq C_3  \left[\int_{\Lambda_{n,i}} \mathcal{G}^\ast (x,u) \diff x + \int_{\Lambda_{n,i}} |k(x)|u\diff x + \int_{\Lambda_{n,i}} |d(x)|u\diff x\right],
\end{align*}
where  
\begin{equation*}\label{Lambda.n.i}
	\Lambda_{n,i}:=A_{k_{n+1}} \bigcap \wt{Q}_i.
\end{equation*} 
Indeed, the claim can be proved in the same manner as in the proof leading to \eqref{local.est.1}. To this end, let $\xi \in C_c^\infty(\RN)$ be such that 
\begin{align}\label{def.xi}
	0 \leq \xi \leq 1, ~ \xi=1 ~ \text{on}~ Q_i, ~ \supp (\xi) \subset \wt{Q}_i ~ \text{and}~ |\nabla \xi|<\frac{C}{\eps}.
\end{align}
Then, by taking $v_n \xi^{\wt{q}^+_i}$ as a test function in \eqref{def.weak.sol} leads to
\begin{align*}
	&\int_{\Lambda_{n,i}}
	\left(\mathcal{A}(x,u,\nabla u) \cdot \nabla v_n\right)\xi^{\wt{q}^+_i}  \diff x +  \int_{\Lambda_{n,i}} V(x) \Big(|u|^{p(x)-2}+a(x)|u|^{q(x)-2}\Big)uv_n\xi^{\wt{q}^+_i} \diff x  \notag  \\
	& \quad \quad =\int_{\Lambda_{n,i}} \mathcal{B}(x,u,\nabla u)v_n\xi^{\wt{q}^+_i} \diff x 
	- \int_{\Lambda_{n,i}}	\left( \mathcal{A}(x,u,\nabla u) \cdot \nabla \xi \right) v_n  \wt{q}^+_i \xi^{\wt{q}^+_i-1} \, \diff x.
\end{align*}
Due to the fact that $\nabla u= \nabla v_n$ and $v_n < u $ a.e. on $\Lambda_{n,i}$ the preceding equality yields
\begin{align}\label{C.var.Eq}
	&\int_{\Lambda_{n,i}}
	\Big(\mathcal{A}(x,u,\nabla v_n) \cdot \nabla v_n\Big)\xi^{\wt{q}^+_i}   \diff x +  \int_{\Lambda_{n,i}} V(x) \mathcal{H}(x,v_n)\xi^{\wt{q}^+_i} \diff x  \notag \\
	& \quad \quad
	\leq \int_{\Lambda_{n,i}} \left| \mathcal{B}(x,u,\nabla v_n)v_n\xi^{\wt{q}^+_i}\right|  \diff x +   \wt{q}^+_i  \int_{\Lambda_{n,i}} \left| \left( \mathcal{A}(x,u,\nabla v_n) \cdot \nabla \xi \right)  v_n  \xi^{\wt{q}^+_i-1} \right| \diff x.
\end{align}
Invoking \eqref{H2} (ii) and arguing as that leading to \eqref{A.1} with noticing $u>k_{n+1}>1$ on $A_{k_{n+1}}$,  we deduce
\begin{align}\label{Est.A}
	\int_{\Lambda_{n,i}} \left(\mathcal{A}(x,u,\nabla v_n) \cdot \nabla v_n\right) \xi^{\wt{q}^+_i} \diff x &\geq M_2\int_{\Lambda_{n,i}}\mathcal{H}(x,|\nabla v_n|)\xi^{\wt{q}^+_i}  \diff x-M_3\int_{\Lambda_{n,i}}\left[\mathcal{G}^*(x,u)+1\right] \diff x \notag\\
	&\geq M_2\int_{\Lambda_{n,i}} \mathcal{H}(x,|\nabla v_n|)\xi^{\wt{q}^+_i} \diff x-2M_3\int_{\Lambda_{n,i}} \mathcal{G}^*(x,u) \diff x. 
\end{align}
Invoking \eqref{C.var.Eq} and \eqref{Est.A} we arrive at
\begin{multline}\label{Est.A.1}
\int_{\Lambda_{n,i}} \Big[\mathcal{H}(x,|\nabla v_n|) + \left(1+V(x)\right) \mathcal{H}(x,v_n)\Big] \xi^{\wt{q}^+_i} \diff x  \leq 	 C_4\int_{\Lambda_{n,i}} \left| \mathcal{B}(x,u,\nabla v_n)v_n\xi^{\wt{q}^+_i}\right|  \diff x\\ +   C_4  \int_{\Lambda_{n,i}} \left| \left( \mathcal{A}(x,u,\nabla v_n) \cdot \nabla \xi \right)  v_n  \xi^{\wt{q}^+_i-1} \right| \diff x  +C_4\int_{\Lambda_{n,i}} \mathcal{H}(x,v_n) \diff x +C_4\int_{\Lambda_{n,i}} \mathcal{G}^*(x,u) \diff x.
\end{multline}
By \eqref{H2} (iii), Young's inequality, \eqref{def.xi} and the fact that $v_n< u$ a.e. on $\Lambda_{n,i}$, we obtain
\begin{align}\label{Est.B}
	&C_4\int_{\Lambda_{n,i}} \left| \mathcal{B}(x,u,\nabla v_n)v_n\xi^{\wt{q}^+_i}\right|  \diff x \notag \\
	&\leq C_4L\int_{\Lambda_{n,i}}\left(|d(x)| + u^{p^*(x)-1} + a(x)^{\frac{q^*(x)}{q(x)}}u^{q^*(x)-1} + |\nabla v_n|^{\frac{p(x)}{(p^*)'(x)}} + a(x)^{\frac{N+1}{N}}|\nabla v_n|^{\frac{q(x)}{(q^*)'(x)}}\right)v_n\xi^{\wt{q}^+_i}  \diff x \notag\\
	&\leq \frac{1}{4}\int_{\Lambda_{n,i}}\mathcal{H}(x,|\nabla v_n|)\xi^{\wt{q}^+_i}  \diff x + C_5\int_{\Lambda_{n,i}}\mathcal{G}^*(x,u) \diff x + C_5 \int_{\Lambda_{n,i}} |d(x)|u  \diff x.
\end{align}
On the other hand, by \eqref{H2} (i) and the fact that $v_n< u$ a.e. on $\Lambda_{n,i}$ again, we have
\begin{multline}\label{A.nab.e}
C_4 \int_{\Lambda_{n,i}} \Big|\left( \mathcal{A}(x,u,\nabla v_n) \cdot \nabla \xi\right) v_n  \xi^{\wt{q}_i^+-1}\Big| \, \diff x  \leq  C_4M_1\int_{\Lambda_{n,i}} \left[u^{p^*(x)}+a(x)^{\frac{q^*(x)}{q(x)}}u^{q^*(x)} \right]|\nabla \xi|\xi^{\wt{q}^+_i-1}\, \diff x \\
 + C_4M_1 \int_{\Lambda_{n,i}} \left[|\nabla v_n|^{p(x)-1} +a(x)|\nabla v_n|^{q(x)-1}\right]|\nabla \xi|v_n\xi^{\wt{q}^+_i-1} \, \diff x  + 
C_4M_1\int_{\Lambda_{n,i}} |k(x)|v_n\diff x.
\end{multline}
By making use of \eqref{def.xi}, Young's inequality and $u>k_{n+1}>1$ on $\Lambda_{n,i}$ we argue in the same manner as those obtaining \eqref{S.est.2} and \eqref{S.est.1} to conclude
\begin{align}\label{Est.A.2}
	M_1C_4 \int_{\Lambda_{n,i}} \left[u^{p^*(x)}+a(x)^{\frac{q^*(x)}{q(x)}}u^{q^*(x)} \right]|\nabla \xi|\xi^{\wt{q}^+_i-1}\, \diff x &  \leq 
	C_6 \int_{\Lambda_{n,i}} \mathcal{G}^\ast(x,u) \diff x
\end{align}
and
\begin{align}\label{Est.A.3}
	&M_1C_4\int_{\Lambda_{n,i}} \left[|\nabla v_n|^{p(x)-1} +a(x)|\nabla v_n|^{q(x)-1}\right]|\nabla \xi|v_n\xi^{\wt{q}^+_i-1}\, \diff x  \notag \\
	& \quad \quad  \leq \frac{1}{4} \int_{\Lambda_{n,i}} \mathcal{H}(x,|\nabla v_n|)\xi^{\wt{q}^+_i}\diff x
	+ C_7 \int_{\Lambda_{n,i}} \left[u^{p(x)}+a(x)u^{q(x)}\right] \diff x \notag \\
	& \quad \quad  \leq \frac{1}{4} \int_{\Lambda_{n,i}} \mathcal{H}(x,|\nabla v_n|)\xi^{\wt{q}^+_i}\diff x
	+ C_7 \int_{\Lambda_{n,i}} \left[u^{p*(x)}+\left(a(x)u^{q(x)}\right)^{\frac{q^*(x)}{q(x)}}+1\right] \diff x \notag \\
	&  \quad \quad \leq \frac{1}{4} \int_{\Lambda_{n,i}} \mathcal{H}(x,|\nabla v_n|)\xi^{\wt{q}^+_i}\diff x + 	C_8 \int_{\Lambda_{n,i}}\mathcal{G}^\ast(x,u) \diff x.
\end{align}
On account of \eqref{A.nab.e}-\eqref{Est.A.3} we obtain
\begin{align} \label{Est.A.4}
\notag	C_4 \int_{\Lambda_{n,i}} &\Big| \left(\mathcal{A}(x,u,\nabla v_n) \cdot \nabla \xi\right) v_n \wt{q}^+_i \xi^{\wt{q}^+_i-1}\Big| \, \diff x\\
	  &\leq  \frac{1}{4} \int_{\Lambda_{n,i}} \mathcal{H}(x,|\nabla v_n|)\xi^{\wt{q}^+_i}\diff x  + C_9\left[ \int_{\Lambda_{n,i}} \mathcal{G}^\ast(x,u) \diff x  + \int_{\Lambda_{n,i}} |k(x)|u\diff x\right].
\end{align}
Using the fact that $\max\{1,v_n\}< u$ a.e. on $\Lambda_{n,i}$ again, we have
\begin{equation*}
	C_4\int_{\Lambda_{n,i}} \mathcal{H}(x,v_n) \diff x\leq C_4 \int_{\Lambda_{n,i}} \left[v_n^{p(x)}+a(x)^{\frac{q^*(x)}{q(x)}}v_n^{q^*(x)}+1 \right]\, \diff x \leq 2C_4 \int_{\Lambda_{n,i}} \mathcal{G}^\ast (x,u) \diff x.
\end{equation*}
Utilizing this, \eqref{Est.B} and \eqref{Est.A.4}, we derive Claim 2 from \eqref{Est.A.1}.

\medskip
We are now in a position to establish a recursion inequality for $\{Z_n\}_{n \in \N_0}$. Using Claims 1 and 2, we get
\begin{align}\label{G*vn}
	\int_{Q_i }\mathcal{G}^*(x,v_n) \diff x\leq C_{10}\left[\int_{\Lambda_{n,i}} \mathcal{G}^\ast (x,u) \diff x + \int_{\Lambda_{n,i}} |k(x)|u\diff x + \int_{\Lambda_{n,i}} |d(x)|u\diff x \right]^{\frac{(p^*)_i^-}{q_i^+}}.
\end{align}
Here we can take $C_{10}=C_1\left(C_3^{\frac{(p^*)^+}{q^-}}+1\right)$. We now estimate each term on the right-hand side of \eqref{G*vn}. Arguing as that obtaining \eqref{es.d(x)} and using \eqref{k-d.c}, \eqref{C.loc.exp} and the fact that $u>k_*>1$ on $\Lambda_{n,i}$, we infer
\begin{align}\label{cri.k(x)}
	\int_{\Lambda_{n,i}} |k(x)|u  \diff x & \leq  C_{11}\|k\|_{L^{\rho'(\cdot)}(\Lambda_{n,i})} \left(\int_{\Lambda_{n,i}} u^{\kappa^+_i} \diff x\right)^{\frac{1}{\kappa^+_i}}  \leq C_{12}\left[\int_{\Lambda_{n,i}} \mathcal{G}^\ast(x,u) \diff x \right]^{\frac{1}{\kappa_i^+}}. 
\end{align}
Similarly,
\begin{align}\label{cri.d(x)}
	\int_{\Lambda_{n,i}} |d(x)|u  \diff x \leq C_{13} \left[\int_{\Lambda_{n,i}} \mathcal{G}^\ast(x,u) \diff x \right]^{\frac{1}{\kappa_i^+}}.
\end{align}
Using \eqref{cri.k(x)}, \eqref{cri.d(x)},  \eqref{int<1} and \eqref{C.loc.exp},  it follows from \eqref{G*vn} that
\begin{equation*}
	\int_{Q_i} \mathcal{G}^\ast(x,v_n)  \diff x  \leq C_{14}  \left[\int_{\Lambda_{n,i}} \mathcal{G}^\ast(x,u) \diff x \right]^{\frac{(p^*)_i^-}{\kappa_i^+q_i^+}} \leq C_{14}   \left[\int_{\Lambda_{n,i}} \mathcal{G}^\ast(x,u) \diff x \right]^{1+\delta_0},
\end{equation*}
where $C_{14}=1+C_{12}+C_{13}$ and $\delta_0:=\frac{\delta}{\kappa^+}$. Moreover, from \eqref{G*-Zn} it holds that
\begin{equation*}
	\sum_{i \in \N}\int_{\Lambda_{n,i}} \mathcal{G}^\ast(x,u) \diff x \leq 3^N 	\sum_{i \in \N} \int_{A_{k_{n+1}} \cap Q_i}   \mathcal{G}^\ast(x,u) \diff x = 3^N\int_{A_{k_{n+1}}} \mathcal{G}^\ast(x,u) \diff x \leq 3^N2^{2(q^\ast)^+} 2^{n(q^\ast)^+}Z_n.
\end{equation*}
Combining the last two inequalities gives
\begin{align*}
	Z_{n+1} & = \sum_{i \in \N} \int_{Q_i} \mathcal{G}^\ast(x,v_n) \diff x \leq C_{14} \sum_{i \in \N} \left[\int_{\Lambda_{n,i}} \mathcal{G}^\ast(x,u) \diff x \right]^{1+\delta_0}  \\
	& \leq C_{14} \left[\sum_{i \in \N} \int_{\Lambda_{n,i}} \mathcal{G}^\ast(x,u) \diff x  \right] ^{1+\delta_0} \leq C_{15} 2^{n(q^\ast)^+ (1+\delta_0)}Z_n^{1+\delta_0}.
\end{align*}
Thus, we have the recursion inequality
\begin{align} \label{Recur}
	Z_{n+1}& \leq C_{15}b^n Z_n^{1+\delta_0}, \quad \forall n\in\N_0,
\end{align}
where $b:=2^{(q^\ast)^+(1+\delta_0)}$.
By means of  Lemma \ref{leRecur}, it holds that 
\begin{align}\label{Recur+1}
	Z_n \to 0 \quad \text {as } n\to \infty
\end{align}
provided 
\begin{align}\label{Z_0}
	Z_0\leq \min\left\{1,C_{15}^{-\frac{1}{\delta_0}}\ b^{-\frac{1}{\delta_0^{2}}}\right\}.
\end{align}
Note that
\begin{align*}
	Z_0 = \int_{A_{k_{*}}}\mathcal{G}^*(x,u-k_\ast) \diff x \leq \int_{A_{k_{*}}}\mathcal{G}^*(x,u) \diff x.
\end{align*}
Thus, by choosing $k^\ast>1$ sufficiently large such that \eqref{k*} holds and
\begin{align*}
	\int_{A_{k_{*}}}\mathcal{G}^*(x,u) \diff x
	\leq \min\left\{1,C_{15}^{-\frac{1}{\delta_0}}\ b^{-\frac{1}{\delta_0^{2}}}\right\},
\end{align*}
we obtain \eqref{Z_0}, hence, \eqref{Recur+1} is fulfilled. Consequently, we have
\begin{align*}
	\int_{\RN}\mathcal{G}^*(x,(u-2k_\ast)_+) \diff x=\lim_{n\to\infty}Z_n=0.
\end{align*}
Thus, $(u-2k_*)_{+}=0$ a.e.\,in $\RN$, and so
\begin{align*}
	\esssup_{\RN} u \leq 2k_*.
\end{align*}
Replacing $u$ by $-u$ in the above arguments we also get that
\begin{align*}
	\esssup_{\RN} (-u) \leq 2k_*.
\end{align*}
From the last two estimates we obtain
\begin{align*}
	\|u\|_{L^{\infty}(\RN)}\leq 2k_{*}.
\end{align*}
The proof is complete.

\end{proof}
We conclude this section with the remark that, if the reaction term exhibits growth not exceeding $p^*$, as in existing works such as \cite{RK25}, then the regularity assumptions on $p$, $q$, and $a$ can be weakened. More precisely, Theorems~\ref{T.Cri1} and \ref{T.Cri2} remain valid if assumption \eqref{H2}(iii) is replaced by
\begin{equation}\label{H2(iii')}
	|\mathcal{B}(x,t,\xi)|
	\leq L \left[ d(x)+ |t|^{p^*(x)-1}+|\xi|^{\frac{p(x)}{(p^*)'(x)}} +a(x)^{\frac{1}{q(x)}+\frac{1}{(p^*)'(x)}}|\xi|^{\frac{q(x)}{(p^*)'(x)}}\right]
\end{equation}
and assumption \eqref{D} is replaced by
\begin{enumerate}[label=\textnormal{(D1)},ref=\textnormal{D1}]
	\item\label{D1}
	$p,q\in UC(\RN) $ such that $1<p(x)<N$, $p(x)\leq q(x)\leq p^*(x)$  for all $x\in \RN,$ $p\in  C^{0, \frac{1}{|\log t|}}(\RN)$  and $0  \leq a(\cdot) \in L^\infty(\RN)$.
\end{enumerate}
Here, $ UC(\RN)$ denotes the set of all uniformly continuous functions on $\RN$, and $C^{0, \frac{1}{|\log t|}}(\RN)$ denotes the set of all functions $h\colon \RN \to \R$ that are log-H\"older continuous, that is, there exists $C>0$ such that
\begin{align*}
	|h(x)-h(y)| \leq \frac{C}{|\log |x-y||}\quad\text{for all } x,y\in \RN \text{ with } |x-y|<\frac{1}{2}.
\end{align*}
Similarly, Theorems~\ref{T.Sub1} and \ref{T.Sub2} remain valid if \eqref{H1}(iii) is replaced by \eqref{H2(iii')} with $r\in UC(\RN)$ in place of $p^*$ and \eqref{D} is replaced by \eqref{D1}. To establish these facts, in the proofs of Theorems~\ref{T.Cri1} and \ref{T.Cri2} (resp. Theorems~\ref{T.Sub1} and \ref{T.Sub2}) we consider  $\mathcal{G}^*(x,t)=t^{p^*(x)}$ (resp. $\mathcal{F}(x,t)=t^{r(x)}$) and apply the embeddings $W^{1,\mathcal{H}}(Q_i)\hookrightarrow W^{1,p(\cdot)}(Q_i)\hookrightarrow L^{p^*(\cdot)}(Q_i)\hookrightarrow L^{r(\cdot)}(Q_i)$ (see \cite[Corollary 8.3.2]{Diening}) instead of Proposition~\ref{RmkEm}. 

For the case of growth exceeding $p^*$, the proofs of our results are essentially based on the embedding in \eqref{Embed.W_V0}, which requires the Lipschitz continuity of $p$, $q$ and $a$. However, it seems that these regularity assumptions may be weakened, as discussed above.


\section{Existence for supercritical double phase problems}\label{A.Sup}

In this section, we investigate the existence of weak solutions to double phase problems involving supercritical growth, as an application of the \textit{a priori} bounds established in Section~\ref{Sub} and  \cite{HW2022}. 
\subsection{Supercritical double phase problems on $\RN$} We consider the following problem
\begin{align}\label{eq.super}
	- \operatorname{div} \mathcal{A}_0(x,\nabla u) + V(x)A(x,u)
	= f_\theta(x,u)  \text{ in } \mathbb{R}^N,
\end{align}
where
\begin{equation}\label{A0}
	\mathcal{A}_0(x,\xi):=|\xi|^{p(x)-2} \xi + a(x) |\xi|^{q(x)-2}\xi,\ \ A(x,t)=|t|^{p(x)-2}t + a(x) |t|^{q(x)-2}t,
\end{equation}
and 
\begin{equation}\label{B0}
f_\theta(x,t):=b(x)|t|^{\gamma(x)-2}t + \theta  c(x)|t|^{r(x)-2}t 
\end{equation}
for $x,\xi \in \RN$, $t \in \R$ while $\theta>0$ is a real parameter, under the assumption \eqref{O} for the main operator and  the following assumption for the reaction term:
\begin{enumerate}[label=\textnormal{(F$_1$)},ref=\textnormal{F$_1$}]
	\item\label{f1} $0<b(\cdot), c(\cdot)\in L^\infty(\RN) \cap L^{\frac{p^\ast(\cdot)}{p^\ast(\cdot)-\gamma(\cdot)}}(\RN)$, with $\gamma \in C_+(\RN)$ satisfying $\gamma \ll p^\ast$ and  $q^+<\gamma^-\leq \gamma(x)\leq r(x)$ for all $ x \in \RN$.
\end{enumerate}
It is worth noting that the second term in the nonlinearity $f_\theta$ has a growth that can exceed the threshold $q^\ast$, i.e.,  it may have so-called \textit{supercritical} growth. By a weak solution of \eqref{eq.super} we mean a function $u \in X_V$ satisfying $f_\theta(\cdot,u)\in L_{\loc}^\infty(\R^N)$ and 
\begin{equation*}
	\int_{\RN} \mathcal{A}_0(x,\nabla u) \cdot \nabla v \diff x + \int_{\RN} V(x)A(x,u)v \diff x = \int_{\RN} f_\theta(x,u) v \diff x,\quad \forall v \in C_c^\infty(\RN).
\end{equation*}
Our main result in this section is stated as follows.


\begin{theorem}\label{T.Super.E}
	Let \eqref{O} and \eqref{f1} hold. Then, there exists  $\theta_0>0$, such that for every $\theta \in (0,\theta_0)$, problem \eqref{eq.super} admits a nontrivial nonnegative weak solution $u \in X_V \cap L^\infty(\RN)$.
\end{theorem}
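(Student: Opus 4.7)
The plan is to apply a Rabinowitz-type truncation to reduce~\eqref{eq.super} to a subcritical auxiliary problem covered by Theorem~\ref{T.Sub1}, and then to arrange the parameters so that the truncation is inactive on the solution obtained. Concretely, for $k\geq 1$ to be fixed later, introduce the continuous cut-off
\begin{equation*}
	\tilde{g}_k(x,t):=\begin{cases} t^{r(x)-1}, & 0<t\leq k,\\ k^{r(x)-\gamma(x)}\,t^{\gamma(x)-1}, & t> k,\\ 0, & t\leq 0,\end{cases}
\end{equation*}
and set $\tilde{f}_\theta^k(x,t):=b(x)(t_+)^{\gamma(x)-1}+\theta c(x)\tilde{g}_k(x,t)$, so that $0\leq \tilde{f}_\theta^k(x,t)\leq L_k^\theta\, t_+^{\gamma(x)-1}$ with $L_k^\theta:=\|b\|_\infty+\theta\|c\|_\infty k^{(r-\gamma)^+}$. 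Thus the auxiliary problem $-\operatorname{div}\mathcal{A}_0(x,\nabla u)+V(x)A(x,u)=\tilde{f}_\theta^k(x,u)$ fits hypothesis~\eqref{H1} with $r=s:=\gamma\ll p^*$ (hence subcritical), $d\equiv 0$, and structural constant $L_k^\theta$.

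The truncated problem is then solved variationally. Let $\tilde{F}_\theta^k(x,t):=\int_0^t \tilde{f}_\theta^k(x,\tau)\,d\tau$ and consider the $C^1$ functional
\begin{equation*}
	J_{\theta,k}(u):=\int_{\RN}\left[\tfrac{|\nabla u|^{p(x)}}{p(x)}+\tfrac{a(x)|\nabla u|^{q(x)}}{q(x)}+V(x)\left(\tfrac{|u|^{p(x)}}{p(x)}+\tfrac{a(x)|u|^{q(x)}}{q(x)}\right)\right]dx - \int_{\RN}\tilde{F}_\theta^k(x,u)\,dx.
\end{equation*}
For $(\theta,k)$ in the regime $\theta k^{(r-\gamma)^+}\leq 1$ this functional inherits mountain-pass geometry uniformly: positivity on a small sphere follows from $\gamma^->q^+$ combined with the embedding into $L^{\gamma(\cdot)}(\RN)$, while $J_{\theta,k}(tu_1)\to-\infty$ for any fixed nontrivial nonnegative $u_1\in C_c^\infty(\RN)$ again because $\gamma^->q^+$. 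A direct computation verifies Ambrosetti--Rabinowitz $t\tilde{f}_\theta^k(x,t)\geq \gamma^-\tilde{F}_\theta^k(x,t)$ uniformly in $(\theta,k)$; the only nontrivial case $t>k$ reduces to $(1-\gamma^-/\gamma)(b+\theta c k^{r-\gamma})t^\gamma + \theta c\gamma^-(1/\gamma-1/r)k^r\geq 0$, which holds since $\gamma^-\leq\gamma$ and $\gamma\leq r$. Using the compact embedding $X_V\hookrightarrow L^{\gamma(\cdot)}(\RN)$ available under~\eqref{O}, Cerami sequences converge strongly, so the mountain-pass theorem produces a nontrivial critical point $u_{\theta,k}\in X_V$. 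Testing the Euler equation with $(u_{\theta,k})_-\in X_V$ (admissible by Lemma~\ref{lem.(u-k)+} with $k=0$) and using $\tilde{f}_\theta^k\equiv 0$ on $\{t\leq 0\}$ yields
\begin{equation*}
	\int_{\RN}\bigl[\mathcal{H}(x,|\nabla (u_{\theta,k})_-|)+V(x)\mathcal{H}(x,(u_{\theta,k})_-)\bigr]\,dx=0,
\end{equation*}
whence $u_{\theta,k}\geq 0$ a.e.\ by Proposition~\ref{prop.nor-mod.D}.

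Finally, the truncation is removed. Since $\tilde{F}_\theta^k\geq 0$, one has $J_{\theta,k}\leq J_0$ (where $J_0$ is the $k$-independent $\theta=0$ functional), so the mountain-pass level is uniformly bounded by $c_0:=\max_{t\in[0,1]}J_0(tu_1)$. Combined with Ambrosetti--Rabinowitz this gives $\|u_{\theta,k}\|\leq K_1$, and Proposition~\ref{P.E2} then yields $\|u_{\theta,k}\|_{L^{\mathcal{F}}(\RN)}\leq K_2$ for $\mathcal{F}(x,t)=t^{\gamma(x)}+a(x)^{\gamma(x)/q(x)}t^{\gamma(x)}$. Under the normalization $\theta k^{(r-\gamma)^+}\leq 1$, the constant $L_k^\theta$ is bounded above by $\|b\|_\infty+\|c\|_\infty$, so Theorem~\ref{T.Sub1} applies with structural data independent of $(\theta,k)$ and delivers $\|u_{\theta,k}\|_{L^\infty(\RN)}\leq K_3$ for an absolute $K_3$. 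Choose $k_0>K_3$ and set $\theta_0:=k_0^{-(r-\gamma)^+}$. For every $\theta\in(0,\theta_0)$ the normalization holds and $\|u_{\theta,k_0}\|_\infty\leq K_3<k_0$, so $\tilde{g}_{k_0}(x,u_{\theta,k_0})=u_{\theta,k_0}^{r(x)-1}$ pointwise and $u:=u_{\theta,k_0}\in X_V\cap L^\infty(\RN)$ is the desired nontrivial nonnegative weak solution of~\eqref{eq.super}. The principal technical obstacle is the compact embedding $X_V\hookrightarrow L^{\gamma(\cdot)}(\RN)$ used for the Cerami condition, which requires the full strength of~\eqref{O} (notably the logarithmic decay of $p$ in the vanishing-potential case); everything else is bookkeeping once this and Theorem~\ref{T.Sub1} are in hand.
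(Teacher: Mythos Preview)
Your overall strategy---truncate the supercritical term at level $k$, solve the subcritical auxiliary problem by mountain pass, bound the energy uniformly via comparison with the $\theta=0$ functional, apply Theorem~\ref{T.Sub1} to get a $k$-independent $L^\infty$ bound, then choose $k$ large---is exactly the paper's approach, and the bookkeeping (uniform mountain-pass level, Ambrosetti--Rabinowitz with constant $\gamma^-$, nonnegativity via testing with $u_-$) is carried out correctly.

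There is, however, one genuine gap: the compact embedding $X_V\hookrightarrow L^{\gamma(\cdot)}(\RN)$ that you invoke for the Palais--Smale/Cerami condition is \emph{not} available under \eqref{O} alone. Indeed, the constant potential $V\equiv 1$ satisfies \eqref{O}(ii), and in that case $X_V$ is translation-invariant, so no unweighted compact embedding into $L^{\gamma(\cdot)}(\RN)$ can hold. The logarithmic condition on $p$ in \eqref{O} concerns the vanishing-potential case and has nothing to do with compactness; it does not rescue this step. What saves the argument is the integrability hypothesis in \eqref{f1}: since $b,c\in L^{\frac{p^\ast(\cdot)}{p^\ast(\cdot)-\gamma(\cdot)}}(\RN)$, the weighted functional $u\mapsto\int_{\RN}m(x)|u|^{\gamma(x)}\,dx$ (with $m=b+c$) is weakly sequentially continuous on $X_V$, which is precisely the compactness needed to pass to the limit in bounded Palais--Smale sequences. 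This is the mechanism the paper uses (via \cite[Theorem~1.14]{HH25}). Once you replace the incorrect embedding claim with this weighted compactness, your proof goes through and coincides with the paper's.
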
 

In the rest of this subsection, we always assume that all hypotheses of Theorem \ref{T.Super.E} hold. We shall prove Theorem \ref{T.Super.E} via a truncation technique. For this purpose, let $t_*\geq 1$ be fixed (to be specified later). We  consider a modified nonlinearity
$g:\RN \times \R \to \R$ defined by
\begin{equation*}
	g(x,t):=	\begin{cases}
		0 \ & \text{ if }  \ t < 0,\\
		b(x)t^{\gamma(x)-1} +  \theta c(x)t^{r(x)-1} \  & \text{ if }  \  0 \leq  t \leq t_*, \\
		b(x)t^{\gamma(x)-1} +  \theta c(x) t_\ast^{r(x)-\gamma(x)}  t^{\gamma(x)-1}  \  &\text{ if }  \   t_*<t
	\end{cases} 
\end{equation*}
for a.a. $x\in\R^N$. Then, $g$ is a  Carath\'eodory function on $\RN \times \R$ satisfying
\begin{enumerate}[label=\textnormal{(G)},ref=G]
	\item \label{G} 
	\begin{enumerate}[(i)]
		\item  $|g(x,t)|  \leq m(x) (1 + \theta C_{t_*})  |t|^{\gamma(x)-1}$ for  a.a. $x \in \RN$ and all $t \in \R$, where $$m(x):=b(x)+c(x) \text{ and } C_{t_*}:=t_\ast^{(r-\gamma)^+};$$
		\item   $0 \leq  \gamma(x) G(x,t) \leq  g(x,t)t$ for  a.a. $x \in \RN$ and all $t \in \R$ with 
		\begin{equation}\label{G.def}
			\displaystyle G(x,t):= \int_0^t g(x,s) \diff s.
		\end{equation} 
	\end{enumerate}
\end{enumerate}
Thus, it follows that $0<m(\cdot)\in L^\infty(\RN) \cap L^{\frac{p^\ast(\cdot)}{p^\ast(\cdot)-\gamma(\cdot)}}(\RN)$ and
\begin{equation}\label{G.bound}
	0\leq G(x,t) \leq m(x)(1+\theta C_{t^\ast})|t|^{\gamma(x)}
\end{equation}
for a.a. $x \in \RN$ and all $t \in \R$.

We consider the following modified problem with subcritical growth:
\begin{equation} \label{eq.m.s} 
	- \operatorname{div} \mathcal{A}_0(x,\nabla u) + V(x)A(x,u) =  g(x,u)  \text{ in } \RN.
\end{equation}
Note that any weak solution of \eqref{eq.m.s} is nonnegative a.a. in $\mathbb{R}^N$. Indeed, if $u$ is a weak solution of \eqref{eq.m.s}, then by Lemma~\ref{lem.(u-k)+} we can take $u_-$ as a test function for \eqref{eq.m.s} to obtain
\begin{equation*}
	\int_{\RN} \left[\mathcal{A}_0(x,\nabla u) \cdot \nabla u_- + V(x)A(x,u)u_-\right] \diff x=\int_{\RN} \big[\mathcal{H}\left(x,|\nabla u_-|\right) + V(x)\mathcal{H}\left(x,u_-\right)\big] \diff x =0.
\end{equation*}
This yields $u_-=0$, i.e., $u\geq 0$ a.a. in $\RN$. Hence, any weak solution $u$ of problem \eqref{eq.m.s} satisfying $\|u\|_\infty<t_*$ is also a bounded nonnegative weak solution of the original problem \eqref{eq.super}.

\begin{lemma}\label{Le.SoMo}
	For all $\theta>0$,	problem \eqref{eq.m.s} admits a nontrivial nonnegative weak solution $u_*\in X_V$ with
		\begin{equation}\label{Le5.2}
		\|u_*\| \leq C_1,
	\end{equation}
	where $C_1>0$ is independent of $u_*$ and $\theta$.
\end{lemma}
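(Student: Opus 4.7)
My plan is to obtain $u_\ast$ as a mountain pass critical point of the $C^1$ energy functional $J\colon X_V \to \R$ given by
\begin{equation*}
J(u) = \int_{\RN} \left[\frac{|\nabla u|^{p(x)}}{p(x)} + \frac{a(x)|\nabla u|^{q(x)}}{q(x)} + V(x)\left(\frac{|u|^{p(x)}}{p(x)} + \frac{a(x)|u|^{q(x)}}{q(x)}\right)\right] \diff x - \int_{\RN} G(x,u) \diff x.
\end{equation*}
Using the growth bound \eqref{G}(i), a H\"older estimate against the weight $m \in L^{\frac{p^\ast(\cdot)}{p^\ast(\cdot)-\gamma(\cdot)}}(\RN)$, and the embedding $X_V \hookrightarrow L^{p^\ast(\cdot)}(\RN)$ contained in Proposition~\ref{P.E2}, $J$ is well defined and of class $C^1$, and its critical points are exactly the weak solutions of \eqref{eq.m.s}.

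I would then verify the mountain pass geometry \emph{uniformly in} $\theta$. Combining \eqref{G}(i) with the same H\"older estimate yields
\begin{equation*}
J(u) \;\geq\; \frac{1}{q^+}\min\bigl(\|u\|^{p^-},\|u\|^{q^+}\bigr) - C\bigl(1+\theta C_{t_\ast}\bigr)\|u\|^{\gamma^-} \quad\text{for } \|u\| \leq 1,
\end{equation*}
which, since $\gamma^- > q^+$, is bounded below by some $\alpha(\theta) > 0$ on a sphere of radius $\rho(\theta) > 0$. For the downhill end of the path I would fix a nonnegative $\phi \in C_c^\infty(\RN)$ with $\phi \geq 1$ on a set of positive measure, and exploit the $\theta$-independent lower bound $G(x,t) \geq b(x)\,t^{\gamma(x)}/\gamma^+$ for $t \geq 0$, which is immediate from the piecewise definition of $g$ (the $\theta$-dependent piece is nonnegative). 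This gives $J(T\phi) \leq C_\phi T^{q^+} - c_\phi T^{\gamma^-}$ for $T \geq 1$, so some $T_0 \geq 1$ and $M_\phi > 0$ depending only on $\phi$ can be chosen with $J(T_0\phi) < 0$ and $\max_{T \in [0,T_0]} J(T\phi) \leq M_\phi$; consequently the mountain pass level obeys $c(\theta) \leq M_\phi$ uniformly in $\theta$.

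The main analytic obstacle I expect is the $(\mathrm{PS})_c$ condition on $X_V$. Boundedness of any $(\mathrm{PS})_c$ sequence $\{u_n\}$ is standard using $J(u_n)\to c$, $\langle J'(u_n),u_n\rangle = o(\|u_n\|)$, \eqref{G}(ii), and $\gamma^- > q^+$. Passing to a weak limit $u_n \weak u$ in $X_V$, I would establish $\int_{\RN} g(x,u_n)(u_n-u) \diff x \to 0$ by splitting $\RN = B_R \cup B_R^c$: on $B_R$ I would invoke the compact subcritical embedding of $W^{1,\mathcal{H}}(B_R)$ into $L^{\gamma(\cdot)}(B_R)$ given by Proposition~\ref{P.Eb} (available since $\gamma \ll p^\ast$), while the tail over $B_R^c$ is made arbitrarily small via a triple H\"older inequality (Proposition~\ref{prop.Holder}) exploiting $\|m\,\mathbf{1}_{B_R^c}\|_{L^{p^\ast(\cdot)/(p^\ast(\cdot)-\gamma(\cdot))}} \to 0$ as $R \to \infty$. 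Combined with the classical $(S_+)$-property of the double phase operator on $X_V$, this upgrades weak to strong convergence and completes $(\mathrm{PS})_c$.

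The mountain pass theorem then produces $u_\ast \in X_V$ with $J'(u_\ast) = 0$ and $J(u_\ast) = c(\theta) > 0$, so $u_\ast \not\equiv 0$. Nonnegativity follows by testing $J'(u_\ast)=0$ against $(u_\ast)_- \in X_V$ (via Lemma~\ref{lem.(u-k)+}) and using $g(x,t)=0$ for $t<0$. For the uniform norm bound, the identity
\begin{equation*}
c(\theta) \;=\; J(u_\ast) - \frac{1}{\gamma^-}\langle J'(u_\ast),u_\ast\rangle \;\geq\; \Bigl(\frac{1}{q^+} - \frac{1}{\gamma^-}\Bigr)\rho_V(u_\ast),
\end{equation*}
where the nonlinear remainder is nonnegative by \eqref{G}(ii) (in the form $g(x,u_\ast)u_\ast \geq \gamma^- G(x,u_\ast)$), combined with $c(\theta) \leq M_\phi$ and the modular-norm relation \eqref{m-n}, yields $\|u_\ast\| \leq C_1$ with $C_1$ independent of $\theta$.
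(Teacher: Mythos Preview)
Your proposal is correct and follows essentially the same route as the paper: a mountain pass argument for the truncated energy, the $(\mathrm{PS})$ condition handled via boundedness plus compactness on balls together with the tail control coming from $m\in L^{p^\ast(\cdot)/(p^\ast(\cdot)-\gamma(\cdot))}(\RN)$, and the $\theta$-independent bound on $\|u_\ast\|$ obtained from the Ambrosetti--Rabinowitz-type inequality \eqref{G}(ii) combined with a $\theta$-free upper bound on the mountain pass level. The only cosmetic difference is that the paper packages the latter step by introducing the comparison functional $J_2$ (with $G$ replaced by $\frac{b(x)}{\gamma(x)}u_+^{\gamma(x)}$) and using $c_\theta\le c_0$, whereas you use the equivalent pointwise bound $G(x,t)\ge b(x)t^{\gamma(x)}/\gamma^+$ directly along a fixed path.
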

\begin{proof}
	The existence of a nontrivial nonnegative weak solution can be established via the Mountain Pass theorem. Since the argument follows a standard procedure, we only provide a sketch of the proof. We begin by considering the energy functional associated with problem~\eqref{eq.m.s}:
	\begin{align*}
		J_1(u):=\int_{\RN} \wh{A}(x,|\nabla u|) \diff x + \int_{\RN} V(x)\wh{A}(x,u)\diff x- \int_{\RN} G(x,u) \diff x, \quad u\in X_V, 
	\end{align*}
	where $\wh{A}:\RN \times \R \to \R$ is defined by
	\begin{equation}\label{A.hat}
	\wh{A}(x,t):=\frac{|t|^{p(x)}}{p(x)} + a(x) \frac{|t|^{q(x)}}{q(x)}.
	\end{equation}
In view of Proposition \ref{P.E2}, we have that $J_1 \in C^1(X_V,\R)$ with derivative given by
		\begin{equation*}
		\langle J_1'(u),v \rangle = \int_{\RN} \mathcal{A}_0(x,\nabla u) \cdot \nabla v \diff x + \int_{\RN}V(x)A(x,u)v \diff x -\int_{\RN} g(x,u)v \diff x,
	\end{equation*}
	for all $u,v \in X_V$. In order to show \eqref{Le5.2}, we also consider the functional
	\begin{equation}\label{J2}
		J_2(u):= \int_{\RN} \wh{A}(x,|\nabla u|) \diff x + \int_{\RN} V(x)\wh{A}(x,u)\diff x - \int_{ \RN} \frac{b(x)}{\gamma(x)}u_+^{\gamma(x)} \diff x,\quad u\in X_V.
	\end{equation} 
	Since $q^+<\gamma^-$, we can find $\bar{\phi} \in C_c^\infty(\RN) $,   $\bar{\phi}(\cdot) \geq 0$ in $\RN$ such that $\|\bar{\phi}\| > 0$ and $J_2(\bar{\phi})<0$.  Note that we have
	\begin{equation}\label{Le5.2C}
	J_1(u)\leq J_2(u),\quad\forall u\in X_V.
	\end{equation}
	 Hence, we obtain
	\begin{equation}\label{MP1}
		J_1(\bar{\phi})<0.
	\end{equation}
		By $q^+<\gamma^-$ again, we can argue as in \cite[Lemma 4.4]{HH25} to show that 
		\begin{equation}\label{MP2}
		\text{there exist} \ \rho>0 \ \text{and} \ 0<\delta<\|\bar{\phi}\|\  \text{such that} \ J_1(u) > \rho\  \text{if} \ \|u\|=\delta. 
		\end{equation}
	For each $\theta>0$, define 
	\begin{equation}\label{c0.theta}
		c_{\theta}:= \inf_{\ell \in \Gamma}\max_{t \in [0,1]}J_1(\ell(t)),
	\end{equation}
	where 
	\begin{equation}\label{Ga}
	\Gamma:=\left\{ \ell \in C([0,1],X_V):~ \ell(0)=0,~ \ell(1)= \bar{\phi}\right\}.
	\end{equation}
	Then, by invoking \cite[Lemma 3.1]{Garcia}, we deduce from the Mountain Pass geometry \eqref{MP1}-\eqref{MP2} of $J_1$ that there exists a $\textup{(PS)}_{c_{\theta}}$ sequence $\{u_n\}_{n \in \N} \subset X_V$ for $J_1$. Thus,
	we have that for $n$ large,
	\begin{align*}
		\notag
		c_\theta+1+\|u_n\|& \geq J_1(u_n)-\frac{1}{\gamma^-}\scal{J_1'(u_n),u_n}\\ \notag
		&\geq \round{\frac{1}{q^+}-\frac{1}{\gamma^-}} \left[ \int_{\mathbb{R}^N}\mathcal{H}(x,|\nabla u_n|) \diff x + \int_{\mathbb{R}^N} V (x)\mathcal{H}(x,u_n) \diff x \right]    \\ 
		&\geq  \round{\frac{1}{q^+}-\frac{1}{\gamma^-}}\left(\|u_n\|^{p^-}-1\right).
	\end{align*}
	This implies that $\{u_n\}_{n \in \N}$ is bounded in $X_V$ since $p^->1$.
	Then, according to Proposition~\ref{P.E2}, it holds, up to a subsequence, that
	\begin{gather*}
		u_n \rightharpoonup u_*  \quad \text{in} \  X_V\ \  \text{and}\ \ u_n(x) \to u_*(x) \quad \text{a.a.} \ \ x \in\mathbb{R}^N.
	\end{gather*}  
	From this and \cite[Theorem 1.14]{HH25}, noting $m \in L^{\frac{p^\ast(\cdot)}{p^\ast(\cdot)-\gamma(\cdot)}}(\RN)$, we can argue as \cite[Proof of Theorem 4.2]{HH25} to obtain
	\begin{align*}
		\lim_{ n \to \infty} \left[\int_{\RN} \mathcal{A}_0(x,\nabla u_*)\cdot \nabla (u_n-u_*) \diff x + \int_{\RN} V(x)A(x,u)(u_n-u_*) \diff x \right]= 0.
	\end{align*} 
Thus, $u_n\to u_*$ in $X_V$ in view of \cite[Lemma 2.15]{HH25}. Consequently, $J_1'(u_*)=0$ and $J_1(u_*)=c_{\theta}\geq \delta>0$, and thus, $u_*$ is a nontrivial nonnegative weak solution to problem \eqref{eq.m.s}.

Finally, we show \eqref{Le5.2}. Set
\begin{equation*}
	c_0:=\inf_{\ell \in \Gamma} \max_{t \in [0,1]} J_2(\ell(t))
\end{equation*}
where $\Gamma$ is given by \eqref{Ga}. By \eqref{Le5.2C}, it holds that
\begin{equation*}
	c_{\theta} \leq c_0.
\end{equation*}
On the other hand, by \eqref{G} (ii) and \eqref{m-n} it follows that
\begin{align*}
	c_0 & \geq 	c_{\theta}  = J_1(u_*) - \frac{1}{\gamma^-} \langle J'_1(u_*),u_*  \rangle  \\
	& \geq \left(\frac{1}{q^+} - \frac{1}{\gamma^-}\right) \min\left\{\|u_*\|^{p^-},1\right\}.
\end{align*}
This yields
$$\|u_*\|\leq \max\left\{1, c_0^{\frac{1}{p^-}}\left(\frac{1}{q^+} - \frac{1}{\gamma^-}\right)^{-\frac{1}{p^-}}\right\}.$$
The proof is complete.
\end{proof}

Now, set $C_m:= 2\|m\|_{L^\infty(\RN)}$ and consider the following condition
\begin{enumerate}[label=\textnormal{(G$_1$)},ref=G$_1$]
	\item \label{G1} $|g(x,t)|  \leq  C_m|t|^{\gamma(x)-1}$ for all $t \in \R$ and a.a. $x \in \RN$.
\end{enumerate}
Then, by applying Theorem~\ref{T.Sub1} for $\mathcal{F}_\gamma(x,t):=\mathcal{F}(x,t)=t^{r(x)} +a(x)^{\frac{s(x)}{q(x)}}t^{s(x)}$ with $r=\gamma$ and $s=\frac{\gamma+p^*}{2}$, we obtain the following lemma.
\begin{lemma}\label{Le.b}
	Let \eqref{G1} hold and $u$ be any weak solution of problem \eqref{eq.m.s}. Then $u \in L^\infty(\RN)$ and satisfies the following a priori bound
	\begin{equation*}
		\|u\|_{L^\infty(\RN)} \leq C_0  \max \left\{ \|u\|^{\tau_1}_{L^{\mathcal{F}_\gamma}(\RN)}, \|u\|^{\tau_2}_{L^{\mathcal{F}_\gamma}(\RN)}\right\}
	\end{equation*} 
	for some positive constants $C_0, \tau_1,\tau_2$ independent of $u$ and $\theta$. 
\end{lemma}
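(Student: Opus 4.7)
The strategy would be to recognize problem~\eqref{eq.m.s} as a special case of~\eqref{eq1} by taking $\mathcal{A}(x,t,\xi) := \mathcal{A}_0(x,\xi)$ and $\mathcal{B}(x,t,\xi) := g(x,t)$, and then to invoke Theorem~\ref{T.Sub1} with the parameter choices $r := \gamma$, $s := (\gamma+p^\ast)/2$, $k \equiv d \equiv 0$. With these choices the function $\mathcal{F}$ from \eqref{def_F} is exactly $\mathcal{F}_\gamma$, so the a-priori bound \eqref{Sub-bound} coincides with the one stated here. The plan therefore reduces to verifying hypothesis \eqref{H1}.

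The structural bounds are immediate. Definition \eqref{A0} gives $|\mathcal{A}_0(x,\xi)| \leq |\xi|^{p(x)-1} + a(x)|\xi|^{q(x)-1}$ and $\mathcal{A}_0(x,\xi)\cdot\xi = \mathcal{H}(x,|\xi|)$, yielding \eqref{H1i} and \eqref{H1ii} with $M_1 = M_2 = 1$, $M_3 = 0$ and $k \equiv 0$. Assumption \eqref{G1} gives $|g(x,t)| \leq C_m|t|^{\gamma(x)-1} = C_m|t|^{r(x)-1}$, producing \eqref{H1iii} with $L = C_m$ and $d \equiv 0$; the integrability condition \eqref{k-d.1} is then vacuous.

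For the exponent inequalities, $1 \ll r = \gamma \ll p^\ast$ is part of \eqref{f1} (using $q^+ < \gamma^-$ for the lower bound). For $s = (\gamma+p^\ast)/2$ I would split $s-q = \tfrac{1}{2}(\gamma-q) + \tfrac{1}{2}(p^\ast-q)$ and bound it below by $(\gamma^- - q^+)/2 > 0$, and split $q^\ast - s = \tfrac{1}{2}(q^\ast-\gamma) + \tfrac{1}{2}(q^\ast-p^\ast)$ and bound it below by $(p^\ast-\gamma)^-/2 > 0$ using $p^\ast \leq q^\ast$ pointwise and $\gamma \ll p^\ast$. For the condition $\kappa \ll p^\ast/q$, the interval $(1,(p^\ast/q)^-)$ is nonempty because $(q/p)^+ < 1 + 1/N$ in \eqref{D} together with $q > 1$ forces $p^\ast \gg q$ by a short algebraic computation; any constant value $\rho = \sigma$ picked from this interval works.

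With \eqref{H1} verified, Theorem~\ref{T.Sub1} applied to $u \in X_V$ yields $u \in L^\infty(\RN)$ together with the estimate \eqref{Sub-bound} for $\mathcal{F} = \mathcal{F}_\gamma$. The crucial observation is that the resulting constants $C, \tau_1, \tau_2$ depend only on $N$, the exponents $p, q, \gamma$, the quantity $\|a\|_\infty$ (and the Lipschitz data), and the structural constants $M_1, M_2, M_3, L = C_m$---none of which involve $\theta$, since \eqref{G1} is a $\theta$-independent bound on $g$ (unlike \textup{(G)}(i), whose constant $1 + \theta C_{t_\ast}$ scales with $\theta$). Setting $C_0 := C$ then gives the claimed estimate. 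I expect the main bookkeeping item will be carefully pinning down the uniform strict exponent gaps throughout $\RN$, rather than any deeper obstacle.
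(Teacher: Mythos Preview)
Your proposal is correct and follows exactly the approach the paper takes: the paper's own proof consists of the single sentence ``by applying Theorem~\ref{T.Sub1} for $\mathcal{F}_\gamma(x,t):=\mathcal{F}(x,t)=t^{r(x)} +a(x)^{\frac{s(x)}{q(x)}}t^{s(x)}$ with $r=\gamma$ and $s=\frac{\gamma+p^*}{2}$,'' and you have supplied precisely the hypothesis verification that this sentence leaves implicit. Your observation that the constants are $\theta$-independent because \eqref{G1} (unlike \textup{(G)}(i)) involves only $C_m$ is the essential point for the subsequent application, and you have identified it correctly.
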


We are now in a position to prove Theorem~\ref{T.Super.E}.

\begin{proof}[Proof of Theorem~\ref{T.Super.E}]
By Proposition \ref{P.E2}, we find $C_\gamma>0$ such that
\begin{equation}\label{PT5.1-1}
\|u\|_{L^{\mathcal{F}_\gamma}(\RN)}\leq C_\gamma\|u\|,\quad \forall u\in X_V.
\end{equation}
Now, we choose 
$$t_*=\max \left\{1,C_0 (C_\gamma C_1)^{\tau_1}, C_0(C_\gamma C_1)^{\tau_2}\right\}\ \ \text{and}\ \ \theta_0:=C_{t_*}^{-1},$$ where $C_0, \tau_1,\tau_2$ are given in Lemma~\ref{Le.b} while $C_1$ is given in Lemma~\ref{Le.SoMo}. 

Let $\theta\in (0,\theta_0)$. Then, problem~\eqref{eq.m.s} admits a nontrivial nonnegative weak solution $u_*\in X_V$ with $	\|u_*\| \leq C_1$ in view of Lemma~\ref{Le.SoMo}. On the other hand,  \eqref{G1} is fulfilled thanks to the fact that $\theta C_{t_\ast}<1$ and \eqref{G}. Thus, by Lemma \ref{Le.b} and \eqref{PT5.1-1}, we obtain
\begin{equation*}
	\|u_*\|_{L^\infty(\RN)} \leq C_0  \max \left\{ \|u_*\|^{\tau_1}_{L^{\mathcal{F}_\gamma}(\RN)}, \|u_*\|^{\tau_2}_{L^{\mathcal{F}_\gamma}(\RN)}\right\}\leq C_0\max \left\{ (C_\gamma C_1)^{\tau_1}, (C_\gamma C_1)^{\tau_2}\right\}.
\end{equation*} 
Namely, we arrive at
	\begin{equation*}
		\|u_*\|_{L^\infty(\RN)} \leq t_*.
	\end{equation*}
	That is to say, $u_*$ is a nontrivial nonnegative weak solution of problem \eqref{eq.super}. The proof is complete.
\end{proof}

\subsection{Supercritical double phase problems on  bounded domains} We consider the following problem
\begin{align}\label{eq.super2}
\begin{cases}
		- \operatorname{div} \mathcal{A}_0(x,\nabla u) 
	= f_\theta(x,u)& \ \text{ in } \Omega,\\
	u=0&\ \text{ on }\ \partial\Omega,
\end{cases}
\end{align}
where $\Omega$ is a bounded domain in $\RN$ with Lipschitz boundary $\partial\Omega$, $\mathcal{A}_0$ and $f_\theta$ are given by \eqref{A0} and \eqref{B0}, respectively.  Let \eqref{D} hold, and  let $f_\theta$ satisfy
\begin{enumerate}[label=\textnormal{(F$_2$)},ref=\textnormal{F$_2$}]
	\item\label{H2'} $0<b(\cdot), c(\cdot)\in L^\infty(\Omega)$, with $\gamma \in C_+(\overline{\Omega})$ satisfying $\gamma (x) < p^\ast(x)$ and  $q^+<\gamma^-\leq \gamma(x)\leq r(x)$ for all $ x \in \overline{\Omega}$.
\end{enumerate}
By a weak solution of \eqref{eq.super2} we mean a function $u \in W_0^{1,\mathcal{H}}(\Omega)$ satisfying $f_\theta(\cdot,u)\in L_{\loc}^\infty(\Omega)$ and 
\begin{equation*}
	\int_{\Omega} \mathcal{A}_0(x,\nabla u) \cdot \nabla v \diff x = \int_{\Omega} f_\theta(x,u) v \diff x,\quad \forall v \in C_c^\infty(\Omega).
\end{equation*}
We have the following.


\begin{theorem}\label{T.Super.E2}
	Let \eqref{D} and \eqref{H2'} hold. Then, there exists  $\theta_0>0$, such that for every $\theta \in (0,\theta_0)$, problem \eqref{eq.super2} admits a nontrivial nonnegative weak solution $u \in W_0^{1,\mathcal{H}}(\Omega) \cap L^\infty(\Omega)$.
\end{theorem}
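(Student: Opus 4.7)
The plan is to adapt almost verbatim the truncation-and-a-priori-bound strategy used for Theorem~\ref{T.Super.E}. Since the second term in $f_\theta$ may be supercritical, the natural energy functional is not well-defined on $W_0^{1,\mathcal{H}}(\Omega)$, so I first truncate $f_\theta$ at a level $t_\ast\geq 1$ (to be chosen later) exactly as in Subsection~5.1: set $g(x,t)=0$ for $t<0$, $g(x,t)=b(x)t^{\gamma(x)-1}+\theta c(x)t^{r(x)-1}$ for $0\leq t\leq t_\ast$, and $g(x,t)=b(x)t^{\gamma(x)-1}+\theta c(x)t_\ast^{r(x)-\gamma(x)}t^{\gamma(x)-1}$ for $t>t_\ast$. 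Then $g$ is Carathéodory, subcritical of growth $\gamma(\cdot)$, and obeys the analogue of condition \eqref{G} with $C_{t_\ast}=t_\ast^{(r-\gamma)^+}$. I consider the modified problem
\begin{equation*}
-\operatorname{div}\mathcal{A}_0(x,\nabla u)=g(x,u)\ \text{ in }\ \Omega,\qquad u=0\ \text{ on }\ \partial\Omega,
\end{equation*}
and its associated energy
\begin{equation*}
J_1(u):=\int_\Omega \wh{A}(x,|\nabla u|)\diff x -\int_\Omega G(x,u)\diff x,\qquad u\in W_0^{1,\mathcal{H}}(\Omega),
\end{equation*}
where $G(x,t)=\int_0^t g(x,s)\diff s$ and $\wh{A}$ is defined as in \eqref{A.hat}. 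Well-posedness of $J_1\in C^1(W_0^{1,\mathcal{H}}(\Omega),\R)$ follows from the (compact) embedding in Proposition~\ref{P.Eb} applied with $r=\gamma \ll p^\ast$ and an auxiliary $s$ with $q<s<q^\ast$.

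Next, I mimic Lemma~\ref{Le.SoMo} on the bounded domain: the Mountain Pass geometry for $J_1$ is obtained by comparison with the $\theta$-independent auxiliary functional $J_2(u):=\int_\Omega \wh{A}(x,|\nabla u|)\diff x-\int_\Omega \frac{b(x)}{\gamma(x)}u_+^{\gamma(x)}\diff x$. Since $q^+<\gamma^-$, there is $\bar\phi\in C_c^\infty(\Omega)$, $\bar\phi\geq 0$, with $J_2(\bar\phi)<0$, and $J_1\leq J_2$ gives $J_1(\bar\phi)<0$; the positive-bump estimate on a small sphere is standard (same argument as \cite[Lemma~4.4]{HH25} restricted to $\Omega$). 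The Ambrosetti--Rabinowitz-type bound in \eqref{G}(ii) and $q^+<\gamma^-$ yield boundedness of Palais--Smale sequences, and the $\mathrm{(S_+)}$-type property of the operator (together with compactness of the embedding into $L^{\mathcal{F}_\gamma}(\Omega)$) yields a nontrivial nonnegative critical point $u_\ast\in W_0^{1,\mathcal{H}}(\Omega)$. Nonnegativity follows by testing with $u_{\ast,-}$, which belongs to $W_0^{1,\mathcal{H}}(\Omega)$ by Lemma~\ref{lem.(u-k)+} adapted to $\Omega$. Most importantly, the mountain-pass level is bounded above by the $\theta$-independent level associated with $J_2$, so
\begin{equation*}
\|u_\ast\|_{W_0^{1,\mathcal{H}}(\Omega)}\leq C_1
\end{equation*}
with $C_1$ independent of $\theta$.

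Now I further demand $\theta C_{t_\ast}<1$, so that $g$ satisfies the uniform subcritical bound $|g(x,t)|\leq C_m|t|^{\gamma(x)-1}$ with $C_m=2\|b+c\|_{L^\infty(\Omega)}$. Invoking the bounded-domain $L^\infty$-estimate of \cite{HW2022} (which is exactly the analogue, on $\Omega$, of Theorem~\ref{T.Sub1}, and considerably simpler to prove because $\overline\Omega$ is compact, so no localization over infinitely many cubes is needed) with $\mathcal{F}_\gamma(x,t):=t^{\gamma(x)}+a(x)^{s(x)/q(x)}t^{s(x)}$ for $s=(\gamma+p^\ast)/2$, one obtains
\begin{equation*}
\|u_\ast\|_{L^\infty(\Omega)}\leq C_0\max\bigl\{\|u_\ast\|_{L^{\mathcal{F}_\gamma}(\Omega)}^{\tau_1},\|u_\ast\|_{L^{\mathcal{F}_\gamma}(\Omega)}^{\tau_2}\bigr\}
\end{equation*}
with constants $C_0,\tau_1,\tau_2$ independent of $u_\ast$ and $\theta$. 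Combining with the embedding constant $C_\gamma$ of $W_0^{1,\mathcal{H}}(\Omega)\hookrightarrow L^{\mathcal{F}_\gamma}(\Omega)$ and the bound $\|u_\ast\|\leq C_1$, I set
\begin{equation*}
t_\ast:=\max\bigl\{1,\, C_0(C_\gamma C_1)^{\tau_1},\, C_0(C_\gamma C_1)^{\tau_2}\bigr\},\qquad \theta_0:=C_{t_\ast}^{-1}.
\end{equation*}
Then for every $\theta\in(0,\theta_0)$ both $\theta C_{t_\ast}<1$ and $\|u_\ast\|_{L^\infty(\Omega)}\leq t_\ast$ hold, so $g(x,u_\ast(x))=f_\theta(x,u_\ast(x))$ a.e., and $u_\ast$ is the desired nontrivial nonnegative weak solution of \eqref{eq.super2}, automatically in $L^\infty(\Omega)$.

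The only nontrivial ingredient beyond what is done in Subsection~5.1 is the bounded-domain a-priori $L^\infty$ estimate for subcritical problems with constants independent of the parameter, which is precisely what is furnished by \cite{HW2022}; once it is in hand, the main work of the proof is the book-keeping above to make the choice of $t_\ast$ (and hence $\theta_0$) consistent.
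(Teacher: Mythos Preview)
Your proposal is correct and follows essentially the same approach as the paper: truncate $f_\theta$ at level $t_\ast$, obtain a mountain-pass solution $u_\ast$ of the modified subcritical problem on $W_0^{1,\mathcal{H}}(\Omega)$ with a $\theta$-independent energy bound via comparison with $J_2$, then invoke the bounded-domain a-priori estimate from \cite{HW2022} (in place of Theorem~\ref{T.Sub1}) and Proposition~\ref{P.Eb} (in place of Proposition~\ref{P.E2}) to choose $t_\ast$ and $\theta_0$ exactly as in the proof of Theorem~\ref{T.Super.E}. This is precisely how the paper proceeds.
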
 
This result is proved in the same manner as Theorem~\ref{T.Super.E}. More precisely, we consider  $\Phi_1,\Phi_2: W_0^{1,\mathcal{H}}(\Omega)\to\R$ defined by 
\begin{align*}
	\Phi_1(u):=\int_{\Omega} \wh{A}(x,|\nabla u|) \diff x - \int_{\Omega} G(x,u) \diff x, \quad u\in W_0^{1,\mathcal{H}}(\Omega), 
\end{align*}
	\begin{equation*}
	\Phi_2(u):= \int_{\Omega} \wh{A}(x,|\nabla u|) \diff x - \int_{\Omega} \frac{b(x)}{\gamma(x)}u_+^{\gamma(x)} \diff x,\quad u\in W_0^{1,\mathcal{H}}(\Omega),
\end{equation*} 
where, $\wh{A}$ and $G$ are defined as in \eqref{A.hat} and \eqref{G.def}, respectively while $W_0^{1,\mathcal{H}}(\Omega)$ is endowed with the equivalent norm $\|\cdot\|=\|\nabla\cdot\|_{L^{\mathcal{H}}(\Omega)}$ (see \cite[Proposition 2.18]{crespo2022new}). Then, we arrive at the conclusion of Theorem~\ref{T.Super.E} by repeating the proof of Theorem~\ref{T.Super.E} above, replacing $X_V$, $J_1$ and $J_2$ with $W_0^{1,\mathcal{H}}(\Omega)$, $\Phi_1$ and $\Phi_2$, respectively, and applying Proposition~\ref{P.Eb} and \cite[Theorem 4.2]{HW2022} in place of \cite[Theorem 2.14]{HH25} and Theorem~\ref{T.Sub1}, respectively. We leave the details to the reader.

\vspace{2mm}

{\bf{Acknowledgment}}
Ky Ho was supported by the University of Economics Ho Chi Minh City (UEH), Vietnam, through a university-level research project (CTD-CS-2024-31). Inbo Sim was supported by the National Research
Foundation of Korea Grant funded by the Korea Government (MEST) (NRF-2021R1I1A3A0403627011).

\vspace{2mm}

{\bf{Data Availability}} Data sharing is not applicable to this paper as no datasets were analyzed or generated.

\vspace{2mm}

{\bf{\LARGE Declarations}}

\vspace{2mm}

{\bf{Conflict of interest}} The authors have no Conflict of interest to declare that are relevant to the content of this
article.


 \end{document}